\documentclass[12pt]{article}

\usepackage{tikz}
\usetikzlibrary{calc}
\usetikzlibrary{positioning,backgrounds}

\usepackage{adjustbox}

\usepackage{amsmath,amsthm,amsfonts,amssymb, color,xcolor,subcaption,graphicx} 

\usepackage{todonotes, dsfont}

\usepackage{tikz-cd}

\newtheorem{theorem}{Theorem}[section]

\newtheorem{example}[theorem]{Example}

\newtheorem{lemma}[theorem]{Lemma}
\newtheorem{definition}[theorem]{Definition}

\def\cB{\mathcal{B}}

\def\cF{\mathcal{F}}

\def\cH{\mathcal{H}}

\def\cL{\mathcal{L}}

\def\cN{\mathcal{N}}
\def\cP{\mathcal{P}}

\def\cS{\mathcal{S}}

\def\cZ{\mathcal{Z}}

\def\bE{\mathbb{E}}

\def\bP{\mathbb{P}}
\def\bR{\mathbb{R}}

\def\bZ{\mathbb{Z}}

\newcommand{\fH}{\mathfrak{H}}

\def\k{\kappa}

\def\wt{\widetilde{w}}

\newcommand{\fm}{\mathfrak{m}}

\newcommand{\blackbox}[1]{  {\color{black}\boxed{\color{black}#1}}}

\begin{document}

\title{Chaotic CLT using 
Poisson Fourth Moment Theorem \\ and Graph Theory}

\author{Raluca M. Balan\footnote{Corresponding author. University of Ottawa, Department of Mathematics and Statistics, 150 Louis Pasteur Private, Ottawa, Ontario, K1N 6N5, Canada. E-mail address: rbalan@uottawa.ca.} \footnote{Research supported by a grant from the Natural Sciences and Engineering Research Council of Canada.}
\and
Jaime Garza \footnote{University of Ottawa, Department of Mathematics and Statistics, 150 Louis Pasteur Private, Ottawa, Ontario, K1N 6N5, Canada. E-mail address: jgarza@uottawa.ca.}
}

\date{October 19, 2026}
\maketitle

\begin{abstract}
\noindent {In this article, we consider the parabolic Anderson model and the hyperbolic Anderson model in dimension 1, driven by a {\em time-independent} L\'evy noise, which is either white, or colored with integrable coloration kernel $\k$. Using a Poisson version of the Fourth Moment Theorem due to \cite{DVZ18}, we prove that the spatial integral of the solution satisfies a central limit theorem. We prove that this result also holds for the hyperbolic Anderson model in dimension $d=2$, if in addition $\k$ is bounded. To estimate the fourth cumulant of the projection of the spatial integral on a fixed chaos, we develop a novel technique using graph theory, which allows us to handle some intricate integrals involving the modified contractions.}
\end{abstract}

\noindent {\em MSC 2020:} Primary 60H15; Secondary 60G60, 60G51

\vspace{1mm}

\noindent {\em Keywords:} stochastic partial differential equations, random fields, Malliavin calculus, normal approximations, Poisson random measure, L\'evy noise

\pagebreak


\section{Introduction}

In this article, we consider the parabolic Anderson model {in dimension $d=1$}:
\begin{align}
\label{PAM}
	\begin{cases}
		\dfrac{\partial u}{\partial t} (t,x)
		=  \dfrac{1}{2}\dfrac{\partial^2 u}{\partial x^2} (t,x)+ u(t,x) \dot{X}(x), \quad
	t>0, \ x \in \bR \\
		u(0,x) = 1,  \quad x \in \bR.
	\end{cases}
\end{align}
and the hyperbolic Anderson model {in dimension $d \in \{1,2\}$}: 
\begin{align}
\label{HAM}
	\begin{cases}
		\dfrac{\partial^2 u}{\partial^2 t} (t,x)
		=  
\Delta u(t,x) + u(t,x) \dot{X}(x), \quad
	t>0, \ x \in \bR^d \\
		u(0,x) = 1, \ \dfrac{\partial u}{\partial t} (0,x)=0, \quad x \in \bR^d,
	\end{cases}
\end{align}
driven by the (time-independent) L\'evy colored noise introduced in the recent preprint \cite{BW26}. The goal is to prove a central limit theorem (as $R \to \infty$) for the {\em spatial average} of the solution:
\begin{equation}
\label{def-FR}
F_R(t)=\int_{B_R}\big(u(t,x)-1\big)dx,
\end{equation}
where $B_R=\{x\in \bR^d;|x|\leq R\}$ and $|\cdot|$ is the Euclidean norm. We let $\sigma_R^2(t)={\rm Var}(F_R(t))$.

\medskip

{

This problem has received a lot of attention in the recent years, especially in the case of the Gaussian noise, starting with the seminal article \cite{HNV20}. We refer the reader to \cite{BNZ,DNZ20,HNVZ20,NSZ21,NXZ22,NZ22} for a sample of relevant references. These references rely on a combination of Malliavin calculus with Stein's method, which yields the precise rate of convergence in the central limit theorem, a result which is now called the Quantitative Central Limit Theorem (QCLT). The major tool is a key estimate for the $p$-th moment of the Malliavin derivative of the solution. In the Gaussian case, the time-independent noise can be viewed as a ``black'' noise, being the opposite of the white noise: if the noise is a fractional Brownian motion in time with index $H \in (0,1)$, the case $H=1/2$ corresponds formally to the white noise, while $H=1$ corresponds to the time-independent noise.

\medskip

Recently, the QCLT problem has been studied also for the spatial average of the solution of a stochastic partial differential equation (SPDE) driven by a finite-variance L\'evy noise, using Malliavin calculus with respect to the underlying Poisson random measure of the noise. References \cite{BZ24,BZ26} studied the case of the L\'evy white noise, while \cite{BS26} treated the L\'evy colored noise.
In the present article, we continue these investigations in the case of the time-independent L\'evy noise, which may be white or colored in space. This case introduces new challenges which we explain below.

First, defining an It\^o integral with respect to the time-independent noise (Gaussian or L\'evy) is a delicate issue. In the recent article \cite{BG26}, in dimension $d=1$, we introduced an It\^o integral with respect to a (finite-variance) time-independent L\'evy noise $L$, using a spatially-indexed filtration $(\cF_x)_{x\in \bR}$. But this integral cannot be used for defining the mild solution of the SPDE. Consider for instance the equation $\cL u(t,x)=u(t,x)\dot{L}(x)$ with $u(0,x)=1$, where $\cL$ is a differential operator. One cannot define recursively the Picard iteration sequence 
\[
u_{n+1}(t,x)=1+\int_0^t \int_{\bR}G_{t-s}(x-y) u_n(s,y)L(dy)ds \quad  (u_0=1),
\]
since the integrand is not $\cF_y$-measurable.
For instance, in the case of the wave equation, 
$u_1(t,x)=1+\frac{1}{2} L\big((x-t+s,x+t-s)\big)$ is $\cF_{x+t-s}$-measurable. Therefore, the only integral that we can consider for defining the solution is the Skorohod integral. This is reviewed in Section \ref{section-prelim} below.

Secondly, the traditional methods for proving a QCLT in the Gaussian case do not work in the case of the time-independent L\'evy noise. The major problem is that, in the absence of hypercontractivity and Burkholder-Davis-Gundy inequality, there are no techniques for handling $p$-th moments of stochastic integrals. The recent article \cite{tara} gives a powerful version of the second-order Poincar\'e inequality for the Poisson case, which has been applied in \cite{BZ24} for proving the QCLT for the wave equation driven by L\'evy white noise. But applying this inequality requires an estimate for the $p$-th moments of the Malliavin derivative of the solution, which is not available in the time-independent case.
}

\medskip

We continue now by describing our framework. We start with the definition of the noise.
We let $\cS(\bR^d)$ be the space of rapidly decreasing functions on $\bR^d$, and $\cS'(\bR^d)$ be its dual, the space of tempered distributions.
The (time-independent) {\em L\'evy colored noise} is the collection $X=\{X(\varphi);\varphi\in \cS(\bR^d)\}$ of random variables, given by:
\[
X(\varphi)=L(\varphi *\k), 
\]
where $\k$ is a ``coloration'' kernel,
and $L$ is a (time-independent) {\em L\'evy white noise} given by:
\begin{equation}
\label{def-L}
L(\phi)=\int_{\bR^d \times \bR_0}\phi(x) z \widehat{N}(dx,dz) \quad \mbox{for all $\phi \in L^2(\bR^d)$}.
\end{equation}
Here 
$N$ is a Poisson random measure on $\bR^d \times \bR_0$ of intensity $\fm={\rm Leb} \times \nu$,
defined on a complete probability space $(\Omega,\cF,\bP)$, and $\widehat{N}(F)=N(F)-\fm(F)$ is the compensated version of $N$. The space $\bR_0:=\bR \verb2\2 \{0\}$ is endowed with the distance $d(x,y)=|x^{-1}-y^{-1}|$.

{
In this article, we assume that $\k \in L^1(\bR^d)$, or $d=1$ and $\k=\delta_0$ (i.e. $X=L$).}
Moreover, the measure $\nu$ satisfies the condition:
\begin{equation}
\label{def-m2}
m_2:=\int_{\bR_0}|z|^2 \nu(dz)<\infty.
\end{equation}

\medskip

{
If $\k \in L^1(\bR^d)$, $X(\varphi)$ is well-defined for any $\varphi \in L^2(\bR^d)$, since 
$\varphi * \k \in L^2(\bR^d)$.} Letting
$f=\k*\widetilde{\k}$, where $\widetilde{\k}(x)=\k(-x)$ for all $x \in \bR^d$, we have:
for any $\varphi,\psi \in \cS(\bR^d)$, 
\begin{equation}
\label{def-prod0}
\langle \varphi,\psi\rangle_0:=\int_{\bR^d} \int_{\bR^d}\varphi(x)\psi(y)f(x-y)dxdy=\int_{\bR^d}\cF \varphi(\xi) \overline{\cF \psi(\xi)}\mu(d\xi),
\end{equation}
where $\mu(d\xi)=\frac{1}{(2\pi)^d}|\cF \k(\xi)|^2 d\xi$.
Here
$\cF \varphi(x)=\int_{\bR^d} e^{-i \xi \cdot x} \varphi(x)dx$ is the Fourier transform of function $\varphi \in L^1(\bR^d)$, and $\xi \cdot x$ is the inner product in $\bR^d$.
For any $\varphi,\psi \in \cS(\bR^d)$, 
\[
\bE[X(\varphi)X(\psi)]=m_2 \langle \varphi *\k, \psi*\k\rangle_{L^2(\bR^d)}=m_2 \langle \varphi,\psi\rangle_0.
\]

\medskip

Intuitively, the solution of \eqref{PAM} should satisfy:
\begin{equation}
\label{HAM-eq}
u(t,x) =1+\int_0^t \int_{\bR^d} G_{t-s}(x-y) u(s,y)X(dy)ds,
\end{equation}
where $G_t$ is the fundamental solution of the heat equation:
\[
G_t(x)=\frac{1}{(2\pi t)^{d/2}} \exp\left(-\frac{|x|^2}{2t} \right),
\]
and $|\cdot|$ is the Euclidean norm. Similarly, the solution of \eqref{HAM} should satisfy \eqref{HAM-eq}, where $G_t$ is the fundamental solution of the wave equation:
\begin{align*}
G_t(x)&=\frac{1}{2}1_{\{|x|<t\}} \quad \mbox{if $d=1$},\\
G_t(x)&=\frac{1}{2\pi} \frac{1}{\sqrt{t^2-|x|^2}} 1_{\{|x|<t\}} \quad \mbox{if $d=2$}.
\end{align*}

The rigorous definition of the solution is given Section \ref{section-prelim} below. From \cite{BW26}, it is known that the solutions of \eqref{PAM} and \eqref{HAM} exist, provided that {\em Dalang's condition} holds, i.e.
\begin{equation}
\label{dalang}
\int_{\bR^d}\frac{1}{1+|\xi|^2}\mu(d\xi)<\infty.
\end{equation}
Note that condition \eqref{dalang} always holds in dimension $d=1$. 

\medskip

We are now ready to state our main results. 

\begin{theorem}
\label{cov-th}
{
Suppose that either one of the following conditions hold: \\
(i) $d=1$, $k=\delta_0$ (i.e. $X=L$), and $u$ is the solution of \eqref{PAM} or \eqref{HAM}; \\
(ii) $d=1$, $k \in L^1(\bR)$,  and $u$ is the solution of \eqref{PAM} or \eqref{HAM}; \\
(iii) $d=2$, $\k \in L^1(\bR^2)$, \eqref{dalang} holds, and $u$ is the solution of \eqref{HAM}.
}

Then $\{u(t,x)\}_{x\in \bR^d}$ is strictly stationary and 
\begin{equation}
\label{def-K}
\lim_{R \to \infty}\frac{1}{R^d}\bE[F_R(t)F_R(s)]  =: c(t,s) \quad \mbox{is finite}.
\end{equation}
In particular, $\sigma_R^2(t) \sim c(t) R^d$ as $R \to \infty$, where $c(t)=c(t,t)$.
\end{theorem}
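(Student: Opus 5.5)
The plan is to work with the Wiener--It\^o chaos expansion of the solution with respect to the Poisson random measure $N$, which comes out of the Skorohod-integral definition of the solution from \cite{BW26}. We write $u(t,x)=1+\sum_{n\ge1} I_n(f_{n,t,x})$. The kernel is
\[
f_{n,t,x}(x_1,z_1,\ldots,x_n,z_n)=\frac{1}{n!}\, z_1\cdots z_n \int_{\bR^{dn}}\widetilde{g}_{n,t,x}(y_1,\ldots,y_n)\prod_{j=1}^n \k(y_j-x_j)\,dy,
\]
with $g_{n,t,x}(y_1,\ldots,y_n)=\int_{0<s_n<\cdots<s_1<t}\prod_j G_{s_{j-1}-s_j}(y_{j-1}-y_j)\,ds$, where $s_0=t$, $y_0=x$, and the tilde denotes symmetrization. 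When $\k=\delta_0$ we simply drop the convolution.

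\textbf{Stationarity.} The kernels satisfy $f_{n,t,x+h}(x_1+h,z_1,\ldots)=f_{n,t,x}(x_1,z_1,\ldots)$. The shift $(x,z)\mapsto(x+h,z)$ preserves $\fm$, so $N$ and its shifted version have the same law. Since each $I_n$ is a measurable functional of $N$ and the series converges in $L^2$, every finite-dimensional law of $\{u(t,x+h)\}_x$ equals that of $\{u(t,x)\}_x$.

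\textbf{Covariance.} Orthogonality of chaoses and the isometry give
\[
\bE[F_R(t)F_R(s)]=\sum_{n\ge1}n!\,m_2^n\int_{B_R^2}\langle \widetilde g_{n,t,x},\widetilde g_{n,s,x'}\rangle_{\cH^{\otimes n}}\,dx\,dx'.
\]
Here $\langle\cdot,\cdot\rangle_{\cH^{\otimes n}}$ is the inner product built from $f=\k*\widetilde\k$, or the $L^2$ product in the white case. By stationarity the integrand depends only on $x-x'$, so we write it as $\Phi_n(x-x')$. Then
\[
R^{-d}\int_{B_R^2}\Phi_n(x-x')\,dx\,dx'=\int_{\bR^d}\Phi_n(w)\,\frac{|B_R\cap(B_R-w)|}{R^d}\,dw,
\]
and the ratio is bounded by $|B_1|$ and converges to $|B_1|$ for every $w$. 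By dominated convergence (applied both in $w$ and over the sum in $n$), it therefore suffices to prove
\[
\sum_n n!\,m_2^n\int_{\bR^d}|\Phi_n(w)|\,dw<\infty .
\]
The limit is then $c(t,s)=|B_1|\sum_n n!\,m_2^n\int\Phi_n(w)\,dw$.

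\textbf{Main step: the summable bound.} All kernels here are nonnegative: $G\ge0$, and we may bound by $|\k|$ and take $f=|\k|*|\widetilde\k|$. So $\int|\Phi_n|\,dw$ is dominated by the same expression with nonnegative functions, and integrating in $w$ is legitimate. The key observation is $\int_{\bR^d}\widetilde g_{n,t,w}(y)\,dw=\int g_{n,t,0}(y-w\mathbf 1)\,dw$. Integrating out the position of $x$ amounts to integrating the first propagator $G_{t-s_1}(x-y_1)$ in $x$, which yields the factor $\int G_{t-s_1}=1$ for the heat equation and $t-s_1$ for the wave equation. So
\[
\int\Phi_n(w)\,dw=\big\langle \widetilde g_{n,t,0},\ \textstyle\int \widetilde g_{n,s,w}\,dw\big\rangle_{\cH^{\otimes n}},
\]
where the second argument is a function with one fewer spatial propagator.

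\begin{itemize}
\item In case (ii) and (iii) we use $\|f\|_{L^1}=\|\k\|_{L^1}^2<\infty$. We bound one factor in sup norm and the other in the $L^1$-type $f$-pairing. Young's inequality applied to each propagator pair then gives a bound of the form $C^n t^{an}/(n!)^{b}$ times the simplex volume $t^n/n!$.
\item In case (iii) the Dalang condition \eqref{dalang} also handles the $d=2$ wave kernel, which is locally integrable with $\int G_t=t$; its $L^2$-type pairings are controlled through $\mu$ in the standard way.
\item In case (i), with $f=\delta_0$, the inner product is the $L^2$ product. For $d=1$ we use $\|G_s\|_\infty\lesssim s^{-1/2}$ (heat) or $\le 1/2$ (wave) together with $\|G_s\|_{L^1}\lesssim 1$ or $s$. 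Again the time integrals over simplices produce factors $\Gamma$-ratios that decay like $(n!)^{-1/2}$ or faster.
\end{itemize}

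In all cases, multiplying by $n!$ leaves a summable series because of the extra factorial gains coming from symmetrization. We bound $\|\widetilde g\|\le\|g\|$ by the triangle inequality over permutations, at a cost of $n!$ that must be absorbed. This absorption is the delicate point.

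I expect exactly this to be the main obstacle: controlling the $n!$ from the norm of the symmetrized kernel against the simplex volumes. Time-independence of the noise means the time variables do not decouple across different permutations, unlike the white-in-time case. I would first try to reuse the moment bounds for $\|g_{n,t,x}\|$ already proved in \cite{BW26} for existence, which give $\sum n!\,m_2^n\|\widetilde g_n\|^2<\infty$. Then I would show that the $w$-integration in the case $\k\in L^1$ costs only a factor $C t$ per term, and handle the white case $d=1$ by a direct Fourier computation with $\mu(d\xi)=d\xi/2\pi$.
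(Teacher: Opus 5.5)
Your outer structure matches Lemma \ref{lem-lim-cov}: stationarity from the shift invariance of $\fm$, orthogonality of the chaoses, and dominated convergence with ${\rm Vol}(B_R\cap(B_R-w))/R^d\le\omega_d$. But the entire content of the theorem is the integrability of the covariance (Lemma \ref{lem-rho-int}), i.e.\ $\sum_n n!\,m_2^n\int|\Phi_n(w)|\,dw<\infty$, and this is not proved. You leave the ``absorption of $n!$'' open, and the tools you name do not deliver it. The unanchored factor $H=\int g_{n,s,w}\,dw$ satisfies $H(y+a\mathbf{1})=H(y)$, so it is neither integrable nor in $\cH^{\otimes n}$. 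Cauchy--Schwarz with the existence bounds of \cite{BW26} therefore gives only $\sup_w|\Phi_n(w)|<\infty$, so there is no basis for ``the $w$-integration costs a factor $Ct$''. Also, if you put the sup norm on the anchored factor $\widetilde g_{n,t,0}$, the resulting bound $\|\widetilde g_{n,t,0}\|_{L^\infty}\|f\|_{L^1}^n\int\widetilde H$ is infinite. In cases (i)--(ii) the gap closes easily once the norms are placed the other way round. There is also no $n!$ cost, since $\|\widetilde h\|_{L^p}\le\|h\|_{L^p}$ by the triangle inequality. One gets $\int|\Phi_n|\le\|H\|_{L^\infty}\|\k\|_{L^1}^{2n}\|g_{n,t,0}\|_{L^1}$ (omit $\|\k\|_{L^1}^{2n}$ in case (i)). By \eqref{bound-G} and \eqref{delta-int}, for heat, resp.\ wave, $\|g_{n,t,0}\|_{L^1}=t^n/n!$, resp.\ $t^{2n}/(2n)!$, and $\|H\|_{L^\infty}\le C^n s^{(n+1)/2}/\Gamma(\tfrac{n+3}{2})$, resp.\ $C^n s^{n+1}/(n+1)!$. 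So the prefactor $n!$ is absorbed by the simplex volumes, not by symmetrization.

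Case (iii) is where even this breaks down, and your treatment of it is a single phrase. In $d=2$ the kernel $G_\delta(x)=\frac{1}{2\pi}(\delta^2-|x|^2)^{-1/2}1_{\{|x|<\delta\}}$ is unbounded, and for $n\ge 2$ the function $H$ blows up where two consecutive $y_j$ coincide. Moreover $f=\k*\widetilde{\k}$ is only integrable, so no $L^\infty$ step is available (compare the extra hypothesis $\k\in L^\infty$ that the paper needs in case (iii) of Theorem \ref{CLT}). Nor is this ``controlled through $\mu$ in the standard way''. In Fourier variables, $\int\Phi_n(w)\,dw$ is formally an integral of $\cF\widetilde g_{n,t,0}\,\overline{\cF\widetilde g_{n,s,0}}$ over the hyperplane $\{\xi_1+\cdots+\xi_n=0\}$, which the Dalang-condition estimates used for existence do not control. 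The idea you are missing is the one the paper uses for all three cases at once. Since $\langle\widetilde f_n^*(\cdot,x;t),\widetilde f_n^*(\cdot,0;s)\rangle_{\cH^{\otimes n}}=m_2^n\langle\widetilde f_n(\cdot,x;t),\widetilde f_n(\cdot,0;s)\rangle_{\cP_0(\bR^d)^{\otimes n}}$, the covariance $\rho_{t,s}$ coincides with the covariance $\rho^U_{t,s}$ of the Gaussian model \eqref{PHAM-G}, driven by time-independent noise with covariance $m_2\langle\cdot,\cdot\rangle_0$. The bound $\int\rho^U_{t,s}<\infty$ is a known, nontrivial result for the Gaussian parabolic and hyperbolic Anderson models (\cite{BY23}, \cite{BY22}), and dominated convergence then finishes. (Minor: with $g_{n,t,x}$ defined as the ordered-simplex integral, the chaos kernel carries no factor $1/n!$; your covariance formula implicitly uses the correct normalization.)
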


\begin{theorem}
\label{CLT}
{
Suppose that either one of conditions (i)-(iii) from Theorem \ref{cov-th} hold. In case (iii), we assume in addition that $\k \in L^{\infty}(\bR^2)$.}
Then 
\[
\frac{F_R(t)}{\sigma_R(t)} \stackrel{d}{\longrightarrow} Z \sim \cN(0,1) \quad \mbox{as  $R \to \infty$},
\]
where
 $\stackrel{d}{\longrightarrow}$ denotes the convergence in distribution.
\end{theorem}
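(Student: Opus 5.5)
We use the Poisson chaos expansion of the solution together with a chaos-by-chaos CLT. The solution defined through Skorohod integrals admits the expansion $u(t,x)=1+\sum_{n\ge1} I_n(f_n(\cdot;t,x))$. Here $I_n$ is the multiple integral with respect to $\widehat N$ and
\[
f_n(x_1,z_1,\ldots,x_n,z_n;t,x)=z_1\cdots z_n\,\widetilde g_n(x_1,\ldots,x_n;t,x),
\]
with $g_n$ the usual iterated space-time kernel built from $G$ and $\k$ (time integrated over the simplex), symmetrized in the $x_i$. Then
\[
F_R(t)=\sum_{n\ge1} I_n(f_{n,R}),\qquad f_{n,R}=\int_{B_R} f_n(\cdot;t,x)\,dx .
\]

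The proof has three steps.

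\emph{Step 1: reduction to finitely many chaoses.} By Theorem \ref{cov-th} we have $\sigma_R^2(t)\sim c(t)R^d$. The proof of that theorem should also give the chaos decomposition $R^{-d}n!\|f_{n,R}\|^2\to c_n(t)$, with $\sum_n c_n(t)=c(t)$. It should further give a bound $R^{-d}n!\|f_{n,R}\|^2\le C_n$ with $\sum_n C_n<\infty$, uniform in $R$; this is the typical $C^n t^{2n}/(n!)^{\alpha}$ decay. Hence
\[
\sup_R R^{-d}\,{\rm Var}\Big(\sum_{n>N} I_n(f_{n,R})\Big)\to0 \quad\mbox{as } N\to\infty .
\]
By a standard approximation argument it then suffices to prove that, for each fixed $N$, the vector $R^{-d/2}\big(I_1(f_{1,R}),\dots,I_N(f_{N,R})\big)$ converges to a centered Gaussian vector with diagonal covariance ${\rm diag}(c_n)$. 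Chaoses of different order are orthogonal, so the limiting covariance is indeed diagonal.

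\emph{Step 2: Poisson fourth moment theorem.} We invoke the multivariate Poisson Fourth Moment Theorem of \cite{DVZ18}. For fixed-order multiple integrals with convergent covariances, joint Gaussian convergence follows once each component satisfies $\bE[G_{n,R}^4]-3\bE[G_{n,R}^2]^2\to0$, where $G_{n,R}=R^{-d/2}I_n(f_{n,R})$. That theorem also requires a technical integrability condition on the kernels, which should follow from $m_2<\infty$ together with the moment assumptions on $\nu$ needed for fourth moments. Following \cite{DVZ18}, the fourth cumulant is bounded by a finite sum of terms $\|f_{n,R}\star_r^\ell f_{n,R}\|^2_{L^2}$. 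Here $\star_r^\ell$ are the modified contractions: $r$ variables are identified and $\ell$ of those are integrated, with $0\le\ell\le r\le n$, excluding the case $\ell=r=n$. We then need to show that each such term is $o(R^{2d})$. Because of the product structure in the $z_i$, the $\nu$-integrals factor into moments $\int|z|^k\nu(dz)$ with $k\le4$. What remains is an integral over $(\bR^d)^{\#}$ of products of four copies of $\int_{B_R}g_n(\cdot;t,x)dx$, with some spatial variables identified.

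\emph{Step 3: graph-theoretic estimate (main obstacle).} We encode each contraction integral as a multigraph. It has four vertex clusters, corresponding to the four copies of $f_{n,R}$ and the four spatial integration points $x^{(1)},\dots,x^{(4)}\in B_R$. Edges record the $G$-factors and the $f$- or $\k$-coloration links, and identified variables become shared vertices. We bound $G$-products crudely: $G_s\le C$ for the heat equation away from the origin, and $G$ is an indicator for $d=1$ waves. In case (iii), $G_s\in L^p$ for $p<2$, and we use $\k\in L^\infty\cap L^1$ to absorb the singular factors via $f\in L^1\cap L^\infty$. We then integrate the vertices along a spanning tree, using $\int|f|<\infty$ and $\int G_s<\infty$ for each tree edge and sup bounds for the remaining edges. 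The count of free $R$-scale integrations equals the number of connected components among the four $x^{(i)}$. For any nontrivial contraction with $\ell<n$ or $r>\ell$, the graph connects all four clusters into at most two components, while an identified vertex with $r>\ell$ removes one more degree of freedom. This yields the bound $C_n R^{d}$, or at most $C_nR^{3d/2}$, which is in any case $o(R^{2d})$. The delicate part is to organize the combinatorics so that the spanning-tree argument works uniformly over all contraction patterns, and in $d=2$ to control the singularities of $G$ along the tree. This is where I expect most of the effort.
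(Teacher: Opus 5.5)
Your architecture is the paper's: the chaos decomposition, the multivariate Poisson fourth moment theorem of \cite{DVZ18} on the first $N$ chaoses, the uniform tail bound $\sup_R R^{-d}\sum_{n>N}\sigma_{n,R}^2(t)\le\sum_{n>N}c_n(t)\to0$, and a spanning-tree bound on the contraction norms. The gap is in Step 3, where the proof actually lives.

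\textbf{The component count.} Your principle that the power of $R$ is $d$ times the number of connected components is right, but the count you give does not close the argument. The fourth cumulant involves only $(\ell,r)\notin\{(0,0),(n,n)\}$ (Lemma \ref{cumulant}); you exclude only $(n,n)$. These two excluded patterns are exactly the ones where the four copies split into two components: copies $1$--$3$ and $2$--$4$ for $(0,0)$, copies $1$--$2$ and $3$--$4$ for $(n,n)$. There the norm is $\|f\|^4\asymp R^{2d}$. So ``at most two components'' gives only $O(R^{2d})$, and an identified variable does not produce a fractional rate such as $R^{3d/2}$.

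What is needed (Lemma \ref{connected}) is that for every admissible $(\ell,r)$ the graph is connected. Copies $1,2$ share the $r\ge1$ variables $(\pmb{\xi}_\ell,\pmb{\gamma}_{r-\ell})$, and copies $3,4$ likewise share $r$ variables. Copies $1,3$ share the $n-\ell\ge1$ variables $(\pmb{\gamma}_{r-\ell},\pmb{y}_{n-r})$, and copies $2,4$ share $(\pmb{\gamma}_{r-\ell},\pmb{y}'_{n-r})$. Only then does translating by one $x^{(i)}$ and integrating the other three over $\bR^d$ give a bound $\le\omega_dR^dJ$ with $J$ independent of $R$, and $J<\infty$ needs connectivity to hold.

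\textbf{The choice of tree.} The tree is not arbitrary: an edge may be left out of the tree only if its kernel is bounded.

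In case (ii), $\k$ is merely in $L^1(\bR)$, so the tree must contain every $\k$-edge. This is possible because the $\k$-edges form vertex-disjoint stars around the $w$-vertices (Lemma \ref{tree-d-equal1}). The discarded $G$-edges then cost $(2\pi\delta)^{-1/2}$ for the heat kernel, not a constant. These singularities are absorbed by the Dirichlet integral \eqref{delta-int} over the time simplex.

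In case (iii), $G_s$ is unbounded in $d=2$, so every $G$-edge must be in the tree. This is possible because the $G$-edges form four vertex-disjoint paths ending at $0,b_1,b_2,b_3$ (Lemma \ref{tree-d-equal2}). Only $\k$-edges are discarded, at cost $\|\k\|_\infty$.

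Your ``$G_s\in L^p$, $p<2$'' and ``absorbing via $f=\k*\widetilde{\k}\in L^1\cap L^\infty$'' do not supply this mechanism. In particular, a vertex $u_j$ shared by all four copies carries four $\k$-edges, so the $\k$'s do not pair into factors of $f$. You flag this combinatorics as the main obstacle and leave it open. It is precisely the content of Theorem \ref{gg-lem}, and it is what makes the proof work.
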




{

Theorem \ref{CLT} is called a ``chaotic CLT'', because it is proved by handling separately the projections of $F_R(t)$ on each Poisson chaos. Using the Poisson version of the Fourth Moment Theorem (due to \cite{DVZ18}), we prove that the projection $I_n(g_{n,R})$ on the $n$-th chaos has an asymptotically normal distribution as $R \to \infty$. Lemma \ref{cumulant} shows that the fourth cumulant of $I_n(g_{n,R})$ is bounded by a linear combination of the squared norms of the contractions
$g_{n,R}*_{r}^{\ell} g_{n,R}$, which are difficult to work with. The novelty of this article is that we devise a graph-theoretic technique for estimating the norms of these contractions.

\medskip

This article is organized as follows. In Section \ref{section-prelim}, we introduce some preliminaries. Sections \ref{section-limit} and \ref{section-CLT} contain the proofs of Theorems \ref{cov-th}, respectively \ref{CLT}. The appendices contain some auxiliary results.}

\section{Preliminaries}
\label{section-prelim}

In this section, we include some preliminaries regarding the existence ofsolutions of equations \eqref{PAM} and \eqref{HAM}.

\medskip

Using the relations between $(X,L)$ and $(L,\widehat{N})$, we can write formally equation \eqref{HAM-eq} as:
\begin{align*}
u(t,x) 
&=1+\int_0^t \int_{\bR^d} \big(G_{t-s}(x-\cdot) u(s,\cdot)*\k \big)(y)L(dy)ds\\
&=1+\int_0^t \int_{\bR^d} \int_{\bR_0}\big(G_{t-s}(x-\cdot) u(s,\cdot)*\k \big)(y) z \widehat{N}(dy,dz)ds
\end{align*}

But there is no stochastic integral with respect to $\widehat{N}$ for {\em random} integrands. Since $N$ does not have a time component, there is no filtration $(\cF_t)_{t\geq 0}$ indexed by time, and one cannot introduce a suitable concept of a ``predictable'' random field, which could lead to a proper definition of Picard iterations, as in the study of classical SPDEs with time-dependent noise. Therefore, to define the solution rigorously, we use Malliavin calculus. 

\subsection{Malliavin calculus on the Poisson space}

In this section, we include some basic material about Malliavin calculus with respect to the compensated Poisson random measure $\widehat{N}$.

We recall that $\cH=L^2(Z,\cZ,\fm)$, where 
\begin{equation*}
(Z,\cZ,\fm)=\big(\bR^d \times \bR_0,\ \cB(\bR^d) \otimes \cB(\bR_0), \ {\rm Leb} \times \nu\big).
\end{equation*}
Then $\cH^{\otimes n}=L^2(Z^n,\cZ^{n},\fm^n)$. We let $\cH^{\odot n}$ be the set of symmetric functions in $\cH^{\otimes n}$.

\medskip

$\bullet$
Any random variable $F \in L^2(\Omega)$ which is $\cF^N$-measurable has the {\em Poisson-chaos expansion}:
\begin{equation}
\label{Poisson-chaos}
F=\bE(F)+\sum_{n\geq 1}I_n(f_n), \quad \mbox{for some $f_n \in \cH^{\odot n}$},
\end{equation}
where $I_n$ is the multiple integral with respect to $\widehat{N}$, and the series is orthogonal in $L^2(\Omega)$. 
For any $f \in \fH^{\otimes n}$, $I_n(f)=I_n(\widetilde{f})$,
\[
\bE[I_n(f)]=0 \quad \mbox{and} \quad \bE|I_n(f)|^2=n! \|\widetilde{f}\|_{\fH^{\otimes n}}^2,
\]
where $\widetilde{f}$ is the symmetrization of $f$, defined by:
\[
\widetilde{f}(\xi_1,\ldots,\xi_n)=\frac{1}{n!}\sum_{\rho \in S_n} f(\xi_{\rho(1)},\ldots,\xi_{\rho(n)}), \quad \mbox{for $\xi_1,\ldots,\xi_n \in {\bf Z}$},
\]
and $S_n$ is the set of permutations of $1,\ldots,n$. 

\medskip
$\bullet$
For any random variable $F \in L^2(\Omega)$ with chaos expansion \eqref{Poisson-chaos}, we define the {\em Malliavin derivative} of $F$ (with respect to $\widehat{N}$) by:
\[
D_{\xi}F=\sum_{n\geq 1}nI_{n-1}\big(f_n(\cdot,\xi)\big), \quad \mbox{for all} \quad \xi \in {\bf Z},
\]
provided that 
\[
\bE\|DF\|_{\fH}^2=\sum_{n\geq 1}nn! \|\widetilde{f}_n\|_{\fH^{\otimes n}}^2<\infty.
\] In this case, we write $F \in {\rm Dom}(D)$.

\medskip

$\bullet$
We denote by $\delta:{\rm Dom}(\delta)\to L^2(\Omega) $ the adjoint of $D$, where ${\rm Dom}(\delta)$ is the set of $V \in L^2(\Omega;\fH)$ for which there exists a constant $C=C_V>0$ depending on $V$, such that 
\[
\big|\bE \langle DF,V \rangle_{\fH}\big| \leq C \|F\|_2 \quad \mbox{for any $F \in {\rm Dom}(D)$}.
\] 
We say that $\delta(V)$ is 
{\em the Skorohod integral} of $V$ with respect to $\widehat{N}$, and we write
\[
\delta(V)=\int_{ \bR^d } \int_{\bR_0}V(x,z)\widehat{N}(\delta x, \delta z).
\]
By duality, for any $V \in {\rm Dom}(\delta)$,
\[
\bE\langle DF,V \rangle_{\fH}=\bE[F \delta(V)] \quad \mbox{for any $F \in {\rm Dom}(D)$}.
\]

\subsection{Existence of solution and chaos expansions}

In this section, we introduce the definition of solution of equation \eqref{PAM} or \eqref{HAM}. We refer the reader to Section 4.2 of \cite{BW26} for the heuristics behind this definition. 

\medskip

We introduce some notation. For any $(t,x) \in \bR_{+}\times \bR^d$ and $x_1, \ldots,x_n\in \bR^d$, we let:
\begin{align}
\nonumber
f_n(t_1,x_1,\ldots,t_n,x_n,t,x)&=G_{t-t_n}(x-x_n)G_{t_n-t_{n-1}}(x_n-x_{n-1}) \ldots G_{t_2-t_1}(x_2-x_1) 1_{T_n(t)} (\pmb{t}) \\
\label{def-fn-x}
f_n(\pmb{x},x;t)&=\int_{T_n(t)} f_n(t_1,x_1,\ldots,t_n,x_n,t,x)d\pmb{t},
\end{align}
where $\pmb{x}=(x_1,\ldots,x_n)$, $\pmb{t}=(t_1,\ldots,t_n)$, 
$T_n(t)=\{(t_1,\ldots,t_n)\in [0,t]^n;t_1<\ldots<t_n\}$, and
\begin{equation}
\label{def-phi-n}
\varphi_n(\pmb{x},x;t) = \int_{T_n(t)} \big(f_n(t_1,\cdot,\ldots,t_n,\cdot,t,x)*\k^{\otimes n}\big)(\pmb{x}) d\pmb{t}=\big(f_n(\cdot,x;t) * \k^{\otimes n}\big)(\pmb{x}).
\end{equation}

For any $n\geq 1$ and $(x_1,z_1), \ldots,(x_n,z_n) \in {\bf Z}$, we let
\begin{equation}
\label{def-fn*}
f_n^{*}(x_1,z_1,\ldots,x_n,z_n,x;t):=\prod_{i=1}^{n}z_i \, \varphi_n(x_1,\ldots,x_n,x;t),
\end{equation}
with the convention that $\varphi_n(\cdot,x;t)=f_n(\cdot,x;t)$ if Assumption A2 holds (i.e. $\k=\delta_0$).
\medskip

We have the following definition.

\begin{definition}
{\rm The process $\{u(t,x);t\geq 0,x\in \bR^d\}$ with $\bE|u(t,x)|^2<\infty$ for all $(t,x) \in \bR_{+} \times \bR^d$ and the Poisson-chaos expansion:
\begin{equation}
\label{series-u}
u(t,x)=1+\sum_{n\geq 1}I_n\big(f_n^{*}(\cdot,x;t)\big),
\end{equation}
is the {\bf (Skorohod) solution} of equation\eqref{PAM} (or \eqref{HAM}), provided that $f_n^{*}(\cdot,x;t)\in \cH^{\otimes n}$ for any $(t,x)\in \bR_{+}\times \bR^d$ and $n\geq 1$, and the series converges in $L^2(\Omega)$.
}
\end{definition}

We recall the following result; see Corollary 4.6 of \cite{BW26}.

\begin{theorem}
If Dalang's condition \eqref{dalang} holds, then equations \eqref{PAM} and \eqref{HAM} have unique solutions.
\end{theorem}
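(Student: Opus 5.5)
Since the kernels of the chaos expansion \eqref{series-u} are prescribed explicitly, uniqueness is immediate: two solutions have the same projection on every chaos. The plan is therefore to prove existence, namely that $f_n^*(\cdot,x;t)\in\cH^{\otimes n}$ for every $n$ and that
\[
\sum_{n\ge1} n!\,\|\widetilde{f}_n^*(\cdot,x;t)\|_{\cH^{\otimes n}}^2<\infty .
\]

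\textbf{Step 1: reduction to the time kernels.} The $z$-variables factor out, so $\|\widetilde f_n^*\|_{\cH^{\otimes n}}^2=m_2^n\|\widetilde\varphi_n(\cdot,x;t)\|_{L^2(\bR^{dn})}^2$. Since symmetrization is an $L^2$ contraction, it suffices to show $\sum_n n!\,m_2^n\|\varphi_n\|_{L^2}^2<\infty$. Recalling \eqref{def-phi-n} and applying Minkowski's inequality to the integral over $T_n(t)$, we get
\[
\|\varphi_n(\cdot,x;t)\|_{L^2}\le\int_{T_n(t)}\big\|f_n(t_1,\cdot,\ldots,t_n,\cdot,t,x)\big\|_{0,n}\,d\pmb t ,
\]
where $\|\cdot\|_{0,n}$ is the norm from \eqref{def-prod0} tensorized $n$ times. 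In the white case (i) we take $\k=\delta_0$, so $\mu$ is Lebesgue measure.

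\textbf{Step 2: Fourier computation of the inner norm.} By \eqref{def-prod0}, $\|f_n(\pmb t,\cdot,t,x)\|_{0,n}^2$ is an integral against $\mu(d\xi_1)\cdots\mu(d\xi_n)$ of $\prod_{k=1}^n|\cF G_{t_{k+1}-t_k}(\xi_1+\cdots+\xi_k)|^2$, with $t_{n+1}=t$. Integrating out $\xi_1,\xi_2,\dots$ successively reduces the estimate to a bound on
\[
\sup_{\eta}\int|\cF G_s(\xi+\eta)|^2\mu(d\xi).
\]
This is the key point where Dalang's condition \eqref{dalang} enters. We use $|\cF G_s(\xi)|^2=e^{-s|\xi|^2}$ for the heat equation, and $|\cF G_s(\xi)|^2=\sin^2(s|\xi|)/|\xi|^2\le C(1+s^2)/(1+|\xi|^2)$ for the wave equation. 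In cases (i)--(ii) ($d=1$) the supremum is simply bounded using $\mu(d\xi)\le C\,d\xi$, since $\k\in L^1$ gives $|\cF\k|\le\|\k\|_1$. In case (iii) we split $\mu$ into $\{|\xi|\le N\}$ and $\{|\xi|>N\}$. The high-frequency part is small uniformly in the shift $\eta$ by Dalang's condition, together with the standard argument that the shifted integral is dominated by the unshifted one up to constants. The low-frequency part contributes a factor $C_N s^{-\alpha}$ with some $\alpha<1$, or is even bounded.

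\textbf{Step 3: summation.} The estimate from Step 2 gives a factor of at most $C^n\prod_k(t_{k+1}-t_k)^{-\alpha}$ with $\alpha<1$. The simplex integral of the square root then yields
\[
\|\varphi_n\|_{L^2}\le \frac{C^n t^{n(1-\alpha/2)}}{\Gamma\big(n(1-\alpha/2)+1\big)} .
\]
Hence $n!\,m_2^n\|\varphi_n\|^2\lesssim n!\,C^n/\Gamma\big(n(2-\alpha)+1\big)$, which is summable because $2-\alpha>1$.

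\textbf{Main obstacle.} The main obstacle is exactly this competition between the factor $n!$, which is not compensated by a simplex in time (since the noise is time-independent, the time integrals appear inside the square), and the Gamma factors. If the naive exponent is not good enough, I would first use the frequency truncation with $N=N(n)$ chosen to depend on $n$, making the high-frequency constant geometrically small. This is the mechanism used for the time-independent Gaussian case, and I expect it to carry over verbatim here, because only second moments, i.e.\ $\mu$, enter the estimate.
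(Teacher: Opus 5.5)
Your proposal is correct and more self-contained than the paper, which does not prove this statement but quotes Corollary 4.6 of [BW26]. The mechanism behind that citation appears in the proof of Lemma \ref{lem-rho-int}. Since $\widetilde{f}_n^{*}=\prod_i z_i\,\widetilde{\varphi}_n$, one has $n!\,\|\widetilde{f}_n^{*}(\cdot,x;t)\|_{\cH^{\otimes n}}^2=m_2^n\,n!\,\|\widetilde{f}_n(\cdot,x;t)\|^2_{\cP_0(\bR^d)^{\otimes n}}$. So the L\'evy Skorohod solution exists exactly when the Skorohod solution of the Gaussian problem \eqref{PHAM-G} does, and the latter is known under \eqref{dalang} from the Gaussian literature. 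Your Step 1 is this reduction; Steps 2--3 re-derive the Gaussian chaos bound by hand instead of importing it.

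Your route gives an explicit, elementary proof in the three cases needed here. It also shows that the crude bound $\|\widetilde{\varphi}_n\|\le\|\varphi_n\|$ followed by Minkowski loses nothing essential. The reason is that $\phi(s):=\sup_\eta\int|\cF G_s(\xi+\eta)|^2\mu(d\xi)$ is at most $Cs^{-1/2}$ for heat in $d=1$, and bounded on $(0,t]$ for wave in $d\le 2$. So $\alpha\le 1/2$, the ``main obstacle'' you anticipate does not arise, and no $n$-dependent cutoff is needed. The citation route buys generality: for a spectral measure satisfying only \eqref{dalang} (e.g.\ heat in higher dimension), $\phi$ need not be $O(s^{-\alpha})$ with $\alpha<1$. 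There one needs Cauchy--Schwarz in time plus a Laplace transform in the time increments, as in the Gaussian literature.

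Three small corrections:
\begin{itemize}
\item In Step 2 you must integrate out $\xi_n$ first, since it is the only frequency appearing in a single factor, then $\xi_{n-1}$, and so on.
\item In case (iii) the frequency split is unnecessary. The inequality $\sin^2(s\rho)/\rho^2\le 2(1+s^2)/(1+\rho^2)$ gives directly $\phi(s)\le 2(1+t^2)\sup_\eta\int\frac{\mu(d\xi)}{1+|\xi+\eta|^2}$ for $s\le t$. Your stated justification, shift domination, controls this supremum for the full measure $\mu$. It does not by itself give uniform smallness for the truncated measure $1_{\{|\xi|>N\}}\mu$.
\item The shift domination $\sup_\eta\int\frac{\mu(d\xi)}{1+|\xi+\eta|^2}\le\int\frac{\mu(d\xi)}{1+|\xi|^2}$ requires the inverse Fourier transform of $\mu$, namely $\k*\widetilde{\k}$, to be nonnegative (e.g.\ $\k\ge 0$), because $\frac{1}{1+|\xi|^2}$ is the Fourier transform of a positive kernel. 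This is implicit in the Gaussian results the paper relies on, but you should state it. Alternatively, bound $|\k*\widetilde{\k}|\le|\k|*|\widetilde{\k}|$ and assume \eqref{dalang} for $|\k|$. In cases (i)--(ii) the issue does not arise, since $\mu(d\xi)\le C\,d\xi$.
\end{itemize}
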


Recall the definition \eqref{def-FR} of the spatial integral $F_R(t)$.
Using \eqref{series-u}, we deduce that $F_R(t)$ has the Poisson-chaos expansion:
\[
F_R(t)=\sum_{n\geq 1} I_n\big(g_{n,R}(\cdot, t)\big), \quad \mbox{where} \quad g_{n,R}(\cdot,t)=\int_{B_R}f_n^{*}(\cdot,x;t)dx.
\]

Using definition \eqref{def-fn*} of $f_n^*(\cdot,x;t)$, we see that
\begin{equation}
\label{def-gnR}
g_{n,R}(x_1,z_1,\ldots,x_n,z_n,t)=\prod_{i=1}^{n} z_i \, Q_{n,R}(x_1,\ldots,x_n;t) 
\end{equation}
where
\begin{equation}
\label{def-QnR}
Q_{n,R}(x_1,\ldots,x_n;t) :=\int_{B_R}\varphi_n(x_1,\ldots,x_n,x;t)dx.
\end{equation}

\section{Limiting covariance}
\label{section-limit}

In this section, we give the proof of Theorem \ref{cov-th}. 

\medskip

Similarly to Lemma 4.1 of \cite{BZ24}, it can be proved that
for any $(t_1,x_1),\ldots,(t_k,x_k) \in  \bR_{+} \times \bR^d$ and $z \in \bR^d$,
\[
\big(u(t_1,x_1+z),\ldots,u(t_k,x_k+z)\big) \stackrel{d}{=} \big(u(t_1,x_1),\ldots,u(t_k,x_k)\big),
\]
where $\stackrel{d}{=}$ denotes equality in distribution. In particular, $\{u(t,x)\}_{x\in \bR^d}$ is strictly stationary, and
for any $t,s>0$,
\[
{\rm Cov}\big(u(t,x),u(s,y)\big)=:\rho_{t,s}(x-y) \quad \mbox{depends only on $x-y$}.
\]


\begin{lemma}
\label{lem-rho-int}
{Suppose that either one of conditions (i)-(iii) in Theorem \ref{cov-th} hold.} Then,
\begin{equation}
\label{rho-int}
\int_{\bR^d} \rho_{t,s}(x) dx<\infty \quad \mbox{for any $t,s>0$}.
\end{equation}
\end{lemma}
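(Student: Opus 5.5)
We compute $\rho_{t,s}$ through the chaos expansion \eqref{series-u}, then integrate term by term. By orthogonality of the multiple integrals and the isometry $\bE[I_n(f)I_n(g)]=n!\langle \widetilde f,\widetilde g\rangle_{\cH^{\otimes n}}$, and using $\int_{\bR_0}z^2\nu(dz)=m_2$ in each variable, we get
\[
\rho_{t,s}(x-y)=\sum_{n\ge1} n!\, m_2^n \big\langle \widetilde{\varphi}_n(\cdot,x;t),\widetilde{\varphi}_n(\cdot,y;s)\big\rangle_{L^2(\bR^{dn})}.
\]
Every term is nonnegative pointwise, because $G\ge0$. In case (ii)--(iii) we also need $\k\ge0$. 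If $\k$ is signed, we replace $\k$ by $|\k|$ throughout and get an upper bound for $|\rho_{t,s}|$. Then Tonelli lets us integrate in $x$ term by term. We bound the symmetrized inner product by the unsymmetrized one via Cauchy--Schwarz over permutations, or simply write it as $\frac{1}{n!}\sum_\rho\langle \varphi_n, \varphi_n\circ\rho\rangle$ with nonnegative integrands. This gives
\[
\int_{\bR^d}|\rho_{t,s}(x)|dx\le \sum_{n\ge1} m_2^n \sup_{\rho\in S_n}\int_{\bR^d}\int_{\bR^{dn}} \varphi_n(\pmb{x},x;t)\varphi_n(\pmb{x}_\rho,0;s)\,d\pmb{x}\,dx .
\]

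The key step is the integration in $x$. Since $\int_{\bR^d}\varphi_n(\pmb{x},x;t)dx$ equals $\|\k\|_1^{n}$ times a convolution-free quantity, we bound the first factor directly. Integrating $f_n(\pmb{t},\pmb{x},t,x)$ in $x$ removes the last kernel, contributing $\int G_{t-t_n}=1$ for the heat kernel and $=t-t_n$ for the wave kernel. What remains is a function of $\pmb{x}$ that still depends on differences $x_{k+1}-x_k$, uniformly bounded by $\sup_\pmb{t}\prod \|G_{t_{k+1}-t_k}\|_\infty$. That sup is not finite for the heat kernel, so instead we keep the integration and use Young's inequality.

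Concretely, I would write $\int\!\!\int \varphi_n(\pmb x,x;t)\varphi_n(\pmb x_\rho,0;s)d\pmb x dx$ as a multiple integral over $x$, $\pmb x$ and time simplices. I would integrate $x$ first, so that the $t$-chain loses its root and becomes a path kernel in $x_1,\dots,x_n$. Then I integrate the variables one at a time in an order dictated by the $s$-chain rooted at $0$, bounding each $G$ factor from the $t$-chain by its $L^\infty$ norm only where unavoidable and all others by $L^1$ norms. For the wave equation in $d=1$, $\|G_r\|_\infty=1/2$ and $\|G_r\|_1=r$, so everything is bounded by $C^n t^{n}s^{n}/(n!)^2$ times $n!$. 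In $d=2$ for the wave equation, $G_r\notin L^\infty$, but $G_r\in L^p$ for $p<2$. Here I would use the convolution with $\k\in L^1$ together with Dalang's condition, passing to Fourier as in \eqref{def-prod0}: $\int\rho(x)dx$ equals the spectral density of the covariance at $\xi=0$. The $x$-integral sets the frequency of the root to $0$, leaving $\prod$ of $\cF G$ factors integrated against $\mu^{\otimes n}$. These are controlled by $\sup_\eta\int |\cF G_r(\xi+\eta)|^2\mu(d\xi)\le C_t\int\frac{\mu(d\xi)}{1+|\xi|^2}$, the standard Dalang-type bound.

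For the heat equation in $d=1$ with $\k=\delta_0$, I expect the main obstacle: the $\pmb t$-integrals must absorb $\|G_r\|_\infty\sim r^{-1/2}$ singularities. Since one $L^\infty$ bound per variable is used once, the time integral is $\int_{T_n(t)}\prod (t_{k+1}-t_k)^{-1/2}d\pmb t\le C^n t^{n/2}/\Gamma(n/2+1)$, which still sums against $n!$ from the permutation count when combined with the analogous factor from $s$. The delicate point is making the ordering argument work for arbitrary $\rho$ with only one singular factor per index. That is where I would most carefully track the graph (two paths on the same vertex set), choosing a spanning-tree integration order. With that, the resulting series $\sum_n C^n (ts)^{n/2}/\Gamma(n/2+1)$ converges, proving \eqref{rho-int}.
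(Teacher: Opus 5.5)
Your argument is correct in substance, but it takes a genuinely different route from the paper.

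\textbf{Comparison with the paper.} Your first display is exactly the identity the paper starts from, but the paper stops there. Since $\langle \widetilde f_n^*(\cdot,x;t),\widetilde f_n^*(\cdot,0;s)\rangle_{\cH^{\otimes n}}=m_2^n\langle \widetilde f_n(\cdot,x;t),\widetilde f_n(\cdot,0;s)\rangle_{\cP_0(\bR^d)^{\otimes n}}$, the paper identifies $\rho_{t,s}$ with the covariance $\rho^U_{t,s}$ of the Gaussian model $\cL U=\sqrt{m_2}\,U\dot W$. It then imports $\int\rho^U_{t,s}<\infty$ from \cite{BY23} (heat) and \cite{BY22} (wave). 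You instead re-prove that Gaussian integrability chaos by chaos. In $d=1$ you use the spanning-tree/leaf integration of Lemma \ref{lemB}, which the paper reserves for the fourth cumulant. In $d=2$ you use Fourier analysis plus Dalang's condition. The paper's route is two lines, covers (i)--(iii) at once, and explains why the L\'evy covariance coincides with the Gaussian one. Yours is self-contained and gives explicit bounds for each chaos, but some bookkeeping needs fixing.

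\textbf{Bookkeeping in (i)--(ii).} First, $n!\langle\widetilde\varphi_n(\cdot,x;t),\widetilde\varphi_n(\cdot,0;s)\rangle=\sum_{\rho\in S_n}\langle\varphi_n(\cdot,x;t),\varphi_{n,\rho}(\cdot,0;s)\rangle$, so your displayed bound is missing a factor $n!$. Also, $\int\varphi_n(\pmb{x},x;t)\,dx$ is not $\|\k\|_1^n$ times a convolution-free function, since only $x$ is integrated; you do not use this, though.

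Second, the ``delicate point'' in case (i) is not delicate. Take as spanning tree the whole $s$-path $x_{\rho(1)},\ldots,x_{\rho(n)},0$ together with the edge $\{x_n,x\}$. This spans for every $\rho$. The $n-1$ non-tree edges are the interior edges of the $t$-path, each bounded by $\|G\|_{L^\infty}$. Every $s$-factor is integrated in $L^1$, so for the heat equation the $n!$ is absorbed by $|T_n(s)|=s^n/n!$, not by a second singular time integral. The $n$-th term is then at most $m_2^nC^nt^{(n+1)/2}s^n/\Gamma((n+3)/2)$. For (ii), add all $|\k|$-edges to this tree, as in Lemma \ref{tree-d-equal1}.

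\textbf{Case (iii).} After integrating in $x$, the measure is $\mu^{\otimes n}$ restricted to $\{\xi_1+\cdots+\xi_n=0\}$. That is, it is $\mu^{\otimes(n-1)}$ times the density $(2\pi)^{-2}|\cF\k|^2\le(2\pi)^{-2}\|\k\|_1^2$ evaluated at $-(\xi_1+\cdots+\xi_{n-1})$; this is where $\k\in L^1$ enters. Because of $\rho$, the two chains involve different partial sums. So first apply Cauchy--Schwarz in $L^2$ of this permutation-invariant measure, then Cauchy--Schwarz in time, and then your shift bound $n-1$ times. The $n$-th term is then $O\big(C^nm_2^n(ts)^{n+1}/n!\big)$. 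The $x$-integration against $e^{-ix\cdot(\xi_1+\cdots+\xi_n)}$ still needs justification, e.g.\ insert $e^{-\varepsilon|x|^2}$ and use monotone convergence.

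Finally, the shift bound $\sup_\eta\int|\cF G_r(\xi+\eta)|^2\mu(d\xi)\le C_t\int\frac{\mu(d\xi)}{1+|\xi|^2}$ uses $\k*\widetilde{\k}\ge0$. In (iii) you cannot obtain this by replacing $\k$ with $|\k|$, because Dalang's condition for $\k$ need not hold for $|\k|$. For $\k\ge 0$ your sketch goes through.
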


\begin{proof}
As in Remark 4.7 of \cite{BJ26}, we make a connection with the following Gaussian model:
\begin{align}
\label{PHAM-G}
	\begin{cases}
		\cL U(t,x)= \sqrt{m_2} \, U(t,x) \dot{W}(x), \
	t>0, \ x \in \bR^d \quad (d\leq 2), \\
		U(0,x) = 1, \ \dfrac{\partial U}{\partial t} (0,x)=0, \quad x \in \bR^d.
	\end{cases}
\end{align}
where $m_2$ is given by \eqref{def-m2}, and $\cL$ is the heat operator, or the wave operator. {If $k \in L^1(\bR^d)$,} $W$ is an isonormal Gaussian process $\{W(\varphi);\varphi \in \cP_0(\bR^d)\}$ with covariance
\[
\bE[W(\varphi)W(\psi)]=\langle \varphi,\psi \rangle_0,
\]
with inner product $\langle \cdot,\cdot \rangle_0$ given by \eqref{def-prod0}. {If $k=\delta_0$,} $W$ is a time-independent Gaussian white noise $\{W(\varphi);\varphi \in L^2(\bR^d)\}$. 

The solution of \eqref{PHAM-G} satisfies the integral equation:
\[
U(t,x)=1+\int_0^t \int_{\bR^d}G_{t-s}(x-y)U(s,y)W(\delta y) ds,
\]
where the integral is interpreted in the Skorohod sense.
It is known that the solution of \eqref{PHAM-G} exists and has the chaos expansion
\[
U(t,x)=1+\sum_{n\geq 1}m_2^{n/2} I_n^W \big( f_n(\cdot,x;t)\big),
\]
where $I_n^W$ is the multiple integral with respect to $W$, and $f_n(\cdot,x;t)$ is given by \eqref{def-fn-x}. 
See Theorem 3.2 of \cite{BY23} for the heat equation, respectively Theorem 2.1 of \cite{BY22} for the wave equation.

Note that $\widetilde{f}_n^{*}(x_1,z_1,\ldots,x_n,z_n,x;t)=\prod_{i=1}^{n}z_i (\widetilde{f}_n\big(\cdot,x;t)*\k^{\otimes}\big)(\pmb{x})$, and hence,
\[
 \langle \widetilde{f}_n^*(\cdot,x;t), \widetilde{f}_n^*(\cdot,0;s)\rangle_{\cH^{\otimes n}}= m_2^n
\langle \widetilde{f}_n(\cdot,x;t),  \widetilde{f}_n(\cdot,0;s)\rangle_{\cP_0(\bR^d)^{\otimes n}}.
\]
It follows that
\begin{align*}
\rho_{t,s}(x)&=\bE[(u(t,x)-1)(u(s,0)-1)]=\sum_{n\geq 1}n!\, \langle \widetilde{f}_n^*(\cdot,x;t), \widetilde{f}_n^*(\cdot,0;s)\rangle_{\cH^{\otimes n}}\\
&=\sum_{n\geq 1}n!\, m_2^n \langle \widetilde{f}_n(\cdot,x;t), \widetilde{f}_n(\cdot,0;s)\rangle_{\cP_0(\bR^d)^{\otimes n}}=\bE[(U(t,x)-1)(U(s,0)-1)]=:\rho_{t,s}^U(x).
\end{align*}
The conclusion follows, since 
\[
\int_{\bR^d}\rho_{t,s}^{U}(x)dx<\infty.
\]
See Theorem 1.1 of \cite{BY23} for the heat equation, and Theorem 1.3 of \cite{BY22} for the wave equation.
\end{proof}

\begin{lemma}
\label{lem-lim-cov}
{Suppose that either one of conditions (i)-(iii) in Theorem \ref{cov-th} hold.}
Then,
\begin{equation}
\label{def-cts}
\lim_{R \to \infty}\frac{1}{R^{d}}\bE[F_R(t)F_R(s)] =\omega_d\int_{\bR^d}\rho_{t,s}(x)dx=:c(t,s), \quad \mbox{for any $t,s>0$},
\end{equation}
where $\omega_d$ is the volume of the unit ball $B_1$ in $\bR^d$, i.e. $\omega_1=2$ and $\omega_2=\pi$. 
\end{lemma}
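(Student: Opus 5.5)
By Fubini and the stationarity established above, we write
\[
\bE[F_R(t)F_R(s)]=\int_{B_R}\int_{B_R}\rho_{t,s}(x-y)\,dx\,dy=\int_{\bR^d}\rho_{t,s}(z)\,\big|B_R\cap (B_R-z)\big|\,dz .
\]
The use of Fubini is justified because $|\rho_{t,s}(x-y)|\le \|u(t,x)-1\|_2\|u(s,y)-1\|_2$, and this bound is constant in $(x,y)$ by stationarity, so it is integrable over $B_R\times B_R$. Dividing by $R^d$ gives
\[
\frac{1}{R^d}\bE[F_R(t)F_R(s)]=\int_{\bR^d}\rho_{t,s}(z)\,\frac{|B_R\cap(B_R-z)|}{R^d}\,dz .
\]
For each fixed $z$ the ratio $|B_R\cap(B_R-z)|/R^d$ tends to $|B_1|=\omega_d$ as $R\to\infty$. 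Indeed, $B_{R-|z|}\subset B_R\cap(B_R-z)\subset B_R$, so the ratio lies between $\omega_d(1-|z|/R)_+^d$ and $\omega_d$. The ratio is also bounded by $\omega_d$ uniformly in $z$ and $R$.

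The conclusion then follows from dominated convergence, with dominating function $\omega_d|\rho_{t,s}(z)|$. This requires $\rho_{t,s}\in L^1(\bR^d)$, not merely that $\int\rho_{t,s}<\infty$ as stated in Lemma \ref{lem-rho-int}. The key observation is that $\rho_{t,s}\ge 0$. From the proof of Lemma \ref{lem-rho-int}, $\rho_{t,s}=\rho^U_{t,s}=\sum_{n\ge1}n!\,m_2^n\langle \widetilde f_n(\cdot,x;t),\widetilde f_n(\cdot,0;s)\rangle_{\cP_0(\bR^d)^{\otimes n}}$. Each term is nonnegative, for two reasons. First, $G\ge0$ in all the cases considered: the heat kernel is always nonnegative, and the wave kernel is nonnegative for $d\le2$. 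Hence $f_n\ge0$. Second, in case (i) the inner product is an $L^2$ inner product of nonnegative functions. In cases (ii)–(iii) it equals $\langle \widetilde f_n*\k^{\otimes n},\widetilde f_n*\k^{\otimes n}\rangle_{L^2}$, evaluated at shifted arguments.

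The second point is the step I expect to be the main obstacle. If $\k$ is not assumed nonnegative, positivity of $\rho_{t,s}$ is not automatic. In that case I would instead aim to show absolute integrability directly: bound $|\rho_{t,s}(x)|\le\sum_n n!\,m_2^n\langle \widetilde f_n(\cdot,x;t),\widetilde f_n(\cdot,0;s)\rangle$ computed with $|\k|$ in place of $\k$, i.e. with $f$ replaced by $|\k|*|\widetilde\k|\in L^1$. This is a Gaussian model of the same type, with a nonnegative integrable covariance kernel, so the cited results from \cite{BY23,BY22} again give integrability. For $d=2$ this substitution may interfere with Dalang's condition, which would need checking.

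Granting this, dominated convergence gives the limit $\omega_d\int_{\bR^d}\rho_{t,s}(z)\,dz=c(t,s)$, which proves \eqref{def-cts}. Taking $s=t$ gives $\sigma_R^2(t)\sim c(t)R^d$.
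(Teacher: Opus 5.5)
Your argument is the paper's own proof: Fubini, the substitution $z=x-y$ producing the weight $\mathrm{Vol}\big(B_R\cap B_R(-z)\big)/R^d\le\omega_d$ with pointwise limit $\omega_d$, and dominated convergence with integrability of $\rho_{t,s}$ supplied by Lemma \ref{lem-rho-int}. Your remark that dominated convergence really needs $\int|\rho_{t,s}|<\infty$ is correct, and it is automatic when $\rho_{t,s}\ge 0$, e.g.\ for $\k=\delta_0$ or $\k\ge0$. The paper passes over this point by citing Lemma \ref{lem-rho-int} directly, so the $d=2$ signed-$\k$ case you flag is left open there as well.
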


\begin{proof}
By the dominated convergence theorem,
\begin{align*}
\frac{1}{R^{d}}\bE[F_R(t)F_R(s)] & =\frac{1}{R^{d}}\int_{B_R} \int_{B_R} \rho_{t,s}(x-y)dxdy=\frac{1}{R^{d}} \int_{B_{2R}} \rho_{t,s}(z) {\rm Vol} \big(B_R \cap B_R(-z)\big)dz \\
& \longrightarrow \omega_{d} \int_{\bR^d} \rho_{t,s}(z) dz, \quad \mbox{as} \ R \to \infty.
\end{align*}
The application of this theorem is justified by Lemma \ref{lem-rho-int}. 
\end{proof}

\section{Proof of Theorem \ref{CLT}}
\label{section-CLT}

In this section, we give the proof of Theorem \ref{CLT}.

\medskip

{
First, we study the asymptotic behaviour of the variance of the projection of $F_R(t)$ on a fixed chaos space of order $n$. More precisely, we denote:}
\[
\sigma_{n,R}^2(t):={\rm Var}\big(I_n\big(g_{n,R}(\cdot, t)\big)\big).
\]
{By orthogonality, $\sigma_R^2(t)=\sum_{n\geq 1}\sigma_{n,R}^2(t)$.} Similarly to Lemma \ref{lem-lim-cov},
\begin{align}
\label{sigma-nR1}
\frac{1}{R^d}\sigma_{n,R}^2(t)& =\frac{1}{R^d} \int_{B_R} \int_{B_R} \rho_{t}^{(n)}(x-y)dxdy =\frac{1}{R^{d}} \int_{B_{2R}} \rho_{t}^{(n)}(z) {\rm Vol} \big(B_R \cap B_R(-z)\big)dz \\
\label{sigma-nR}
& \longrightarrow \omega_{d} \int_{\bR^d} \rho_{t}^{(n)}(z) dz=:c_n(t), \quad \mbox{as} \ R \to \infty,
\end{align}
where
\[
\rho_{t}^{(n)}(x)=\bE\big[I_n(f_n^*(\cdot, x;t)) I_n(f_n^*(\cdot, 0;t)) \big]=n!\, \langle \widetilde{f}_n^*(\cdot,x;t), \widetilde{f}_n^*(\cdot,0;t)\rangle_{\cH^{\otimes n}}.
\]

{Therefore, $c(t)=\sum_{n\geq 1}c_n(t)$, where $c(t)=c(t,t)$ is given by \eqref{def-cts}. }

\medskip

We will use the following Fourth Moment Theorems on the Poisson Space.

\begin{theorem}[Corollary 1.3 of \cite{DVZ18}]
\label{4th-theorem}
For each $n\geq 1$, let $F_n=I_{q_n}(f_n)$ for some  $f_n \in \cH^{\otimes n}$ and $q_n\geq 1$. If $\bE(F_n^2) \to 1$ and $\bE(F_n^4) \to 3$, then 
\[
F_n \stackrel{d}{\longrightarrow} Z \sim \cN(0,1).
\]
\end{theorem}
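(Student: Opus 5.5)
This statement is Corollary 1.3 of \cite{DVZ18}, so within the paper I would cite it rather than reprove it. If I were to reconstruct the argument, I would follow the Markov-generator approach to Stein's method on the Poisson space and write $F=F_n=I_q(f)$ with $q=q_n$.

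\textbf{Step 1: the Stein bound.} Let $L$ be the Ornstein--Uhlenbeck generator. Then $F$ is an eigenfunction with $LF=-qF$, so $-L^{-1}F=F/q$. Introduce the carr\'e du champ $\Gamma(F,G)=\frac12\big(L(FG)-FL G-GLF\big)$. Using the integration-by-parts formula $\bE[F\phi(F)]=\bE[\Gamma(\phi(F),F)]/q$, together with a Taylor expansion of $\phi(F+D_\xi F)-\phi(F)$ (the add-one-cost form of the Poisson derivative), I would obtain for the Wasserstein distance
\[
d_W(F,Z)\le \bE\Big|1-\tfrac1q\Gamma(F,F)\Big|+\tfrac{C}{q}\,\bE\int_{Z}|D_\xi F|^3\,\fm(d\xi).
\]
The first term is at most $|1-\bE F^2|+\sqrt{{\rm Var}(\Gamma(F,F))}/q$, because $\bE\Gamma(F,F)=q\bE F^2$.

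\textbf{Step 2: the fourth-moment inequality (main obstacle).} I would aim to prove
\[
\bE F^4-3(\bE F^2)^2\;\ge\; c_q\Big({\rm Var}\big(\Gamma(F,F)\big)+\bE\int_Z (D_\xi F)^4\,\fm(d\xi)\Big)
\]
with $c_q>0$. The route is through the identity $\bE F^4=\frac{3}{q}\bE[F^2\Gamma(F,F)]-\frac{1}{2q}\bE[F(\ldots)]$, obtained by applying the generator to $F^3$ and $F^4$. The essential input is that $F^2$ has a finite chaos decomposition of order at most $2q$, so the spectral projections of $\Gamma(F,F)-q\bE F^2$ onto eigenspaces with eigenvalue $\le 2q$ enter with nonnegative weights. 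The Poisson-specific difficulty is the non-diffusive remainder $\Gamma(F,F)-\frac12 L(F^2)+FLF$, which involves $(DF)^2$. The key observation of \cite{DVZ18} that I would reproduce is that $\frac12\bE\int (D_\xi F)^4\,\fm(d\xi)$ appears with a favorable sign. This requires $\bE F^4<\infty$, which one verifies is guaranteed when the relevant contraction norms are finite.

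\textbf{Step 3: conclusion.} The third-moment term is controlled by Cauchy--Schwarz:
\[
\bE\int|DF|^3\le \Big(\bE\int |DF|^2\Big)^{1/2}\Big(\bE\int |DF|^4\Big)^{1/2}=\big(q\,\bE F^2\big)^{1/2}\Big(\bE\int|DF|^4\Big)^{1/2}.
\]
Combining Steps 1--3 gives
\[
d_W(F_n,Z)\le C_q\Big(|1-\bE F_n^2|+\sqrt{|\bE F_n^4-3(\bE F_n^2)^2|}\Big).
\]
The hypotheses $\bE F_n^2\to1$ and $\bE F_n^4\to3$ make the right-hand side tend to $0$, which yields convergence in distribution.

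The constant $C_q$ depends on $q$. In \cite{DVZ18} the constants stay bounded in $q$ (they are of the form $\sqrt{(4q-3)/q}$ times an absolute constant), so the argument also applies when the orders $q_n$ vary with $n$, as the statement allows. I would check this uniformity in $q$ carefully.
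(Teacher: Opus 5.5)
The paper gives no proof of this statement; it simply quotes Corollary 1.3 of \cite{DVZ18}, so citing it, as you propose, is exactly what the paper does. Your optional carr\'e-du-champ reconstruction is fine in Steps 1 and 3. Step 2, however, has a genuine gap: the mechanism you describe does not produce the inequality. Carrying out the Mecke-formula computation for $F=I_q(f)$ gives
\[
\kappa_4:=\bE F^4-3(\bE F^2)^2=\frac{3}{q}\,{\rm Cov}\big(F^2,\Gamma(F,F)\big)-\frac{1}{2q}\,\bE\int_Z (D_\xi F)^4\,\fm(d\xi),
\]
together with
\[
\Gamma(F,F)-q\,\bE F^2=\sum_{p=1}^{2q}\Big(q-\frac{p}{2}\Big)J_p(F^2),
\]
where $J_p$ is the projection on the $p$-th chaos. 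So the quartic term enters with the \emph{unfavorable} sign. Nonnegativity of the weights $q-p/2$ only yields ${\rm Cov}(F^2,\Gamma(F,F))\ge 0$ and ${\rm Var}(\Gamma(F,F))\le q\,{\rm Cov}(F^2,\Gamma(F,F))$. From this alone you can neither bound $\bE\int_Z (D_\xi F)^4\fm(d\xi)$ nor even conclude $\kappa_4\ge 0$.

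The missing idea is to use the top chaos. The component $J_{2q}(F^2)=I_{2q}(f\widetilde{\otimes} f)$ has weight $q-\frac{2q}{2}=0$. By relation (5.2.12) of \cite{NP12}, the identity also used in Lemma \ref{cumulant}, it satisfies $\bE[J_{2q}(F^2)^2]=(2q)!\|f\widetilde{\otimes} f\|^2\ge 2(q!)^2\|f\|^4=2(\bE F^2)^2$. Since $\bE F^4-(\bE F^2)^2=\sum_{p=1}^{2q}\bE[J_p(F^2)^2]$, this gives $\sum_{p=1}^{2q-1}\bE[J_p(F^2)^2]\le \kappa_4$. Hence ${\rm Var}(\Gamma(F,F))\le (q-\frac12)^2\kappa_4$ and $\frac{3}{q}{\rm Cov}(F^2,\Gamma(F,F))\le \frac{3(2q-1)}{2q}\kappa_4$. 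Inserting the latter into the identity above gives $\bE\int_Z (D_\xi F)^4\fm(d\xi)\le (4q-3)\kappa_4$. With these two bounds your Step 3 closes. The resulting constants, $\frac{q-1/2}{q}$ and $\sqrt{(4q-3)/q}\le 2$, are uniform in $q$, so letting $q_n$ vary is harmless.

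There are also two smaller slips. First, $\Gamma(F,F)-\frac12 L(F^2)+FLF$ vanishes identically by the definition of $\Gamma$; the genuinely non-diffusive quantity is $\Gamma(F^3,F)-3F^2\Gamma(F,F)$. Second, the correction term in your formula for $\bE F^4$ is $\frac{1}{2q}\bE\int_Z (D_\xi F)^4\fm(d\xi)$, with no factor $F$. Finally, you would still need to justify the Mecke manipulations, for instance $\bE\int_Z (D_\xi F)^4\fm(d\xi)<\infty$, starting only from $\bE F^4<\infty$. This is a nontrivial technical point that \cite{DVZ18} takes care of.
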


\begin{theorem}[Corollary 1.8 of \cite{DVZ18}]
\label{4th-theorem-m}
For each $n\geq 1$, let $F_n=(F_n^{(1)},\ldots,F_n^{(d)})$ be such that $F_n^{(i)}=I_{q_k}(f_n^{(i)})$ for some  $f_n^{(i)} \in \cH^{\otimes n}$ and $q_i\geq 1$, for all $i=1,\ldots,d$. Let $\pmb{\Sigma}=(\Sigma_{i,j})_{1\leq i,j\leq d}$ be a symmetric non-negative definite matrix.  If the following conditions hold:
\begin{description}
\item[(a)] ${\rm Cov} (F_n^{(i)},F_n^{(j)}) \to \Sigma_{i,j}$ as $n \to \infty$, for all $i,j=1,\ldots,d$,
\item[(b)] $\bE[(F_n^{(i)})^4] \to 3 \Sigma_{i,i}^2$ for all $i=1,\ldots,d$,
\end{description}
then 
\[
F_n\stackrel{d}{\longrightarrow} Z\sim \cN(\pmb{0}, \pmb{\Sigma}),
\]
where $\cN(\pmb{0},\pmb{\Sigma})$ is the multivariate normal distribution with mean $\pmb{0}$ and variance $\pmb{\Sigma}$.

\end{theorem}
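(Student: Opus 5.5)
The plan is to derive the statement from the univariate fourth moment theory of \cite{DVZ18}, combined with a multivariate Malliavin--Stein bound on the Poisson space. The point of this route is to avoid a Cramér--Wold reduction, which does not work directly: a linear combination $\sum_i \lambda_i F_n^{(i)}$ generally lives in several chaoses, so Theorem \ref{4th-theorem} cannot be applied to it.

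\textbf{Step 1: the multivariate Stein bound.} Write $L$ for the Ornstein--Uhlenbeck generator, so that $-DL^{-1}F_n^{(j)} = q_j^{-1} DF_n^{(j)}$. The multivariate Malliavin--Stein bound of Peccati--Zheng on the Poisson space gives, for any $h\in C^3$ with bounded derivatives,
\[
\big|\bE h(F_n)-\bE h(Z)\big| \le C_h\Big(\sum_{i,j}\bE\big|\Sigma_{i,j}-q_j^{-1}\langle DF_n^{(i)},DF_n^{(j)}\rangle_{\cH}\big| + \sum_{i,j,k}\bE\int_Z |D_\xi F_n^{(i)}|\,|D_\xi F_n^{(j)}|\,|D_\xi F_n^{(k)}|\,\fm(d\xi)\Big).
\]
The aim is to show that both sums tend to zero.

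\textbf{Step 2: the third-order remainder.} By Hölder's inequality, each remainder term is bounded by a product of factors $\big(\bE\int (D_\xi F_n^{(i)})^4\fm(d\xi)\big)^{1/3}$. I will invoke the key estimate of \cite{DVZ18} for a single chaos: for $F=I_q(f)$,
\[
\bE\int_Z (D_\xi F)^4\,\fm(d\xi) \le C_q\big(\bE F^4-3(\bE F^2)^2\big),
\]
and also
\[
{\rm Var}\big(q^{-1}\|DF\|_{\cH}^2\big)\le C_q\big(\bE F^4-3(\bE F^2)^2\big).
\]
These come from their identity expressing $\bE F^4-3(\bE F^2)^2$ through the carré du champ $\Gamma$ and positivity of the terms involved. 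By (a) and (b), the right-hand side tends to $3\Sigma_{i,i}^2-3\Sigma_{i,i}^2=0$. Hence the remainder vanishes, and the diagonal terms $i=j$ of the first sum tend to zero by the variance bound together with $\bE[q_i^{-1}\|DF_n^{(i)}\|^2]={\rm Var}(F_n^{(i)})\to\Sigma_{i,i}$.

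\textbf{Step 3: the off-diagonal terms.} I expect this to be the main obstacle, since hypothesis (b) only controls the marginals. Using the product formula for Poisson multiple integrals, I will expand $\langle DF_n^{(i)},DF_n^{(j)}\rangle_{\cH}$ into multiple integrals of the modified contractions $f_n^{(i)}\star_r^{\ell} f_n^{(j)}$.
\begin{itemize}
\item The expectation equals $q_j^{-1}\bE\langle DF_n^{(i)},DF_n^{(j)}\rangle_{\cH}={\rm Cov}(F_n^{(i)},F_n^{(j)})\to\Sigma_{i,j}$ by (a). This uses that for $q_i=q_j$ we have $\bE\langle DF,DG\rangle_{\cH}=q_j\,\bE[FG]$, while for $q_i\ne q_j$ orthogonality of chaoses makes both the covariance and this expectation vanish.
\item For the variance, I would bound each norm $\|f^{(i)}\star_r^\ell f^{(j)}\|$ by Cauchy--Schwarz, as in Peccati--Tudor. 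The case $\ell=r<\min(q_i,q_j)$ is handled via $\|f^{(i)}\star_r^r f^{(j)}\|^2=\langle f^{(i)}\star_{q_i-r}^{q_i-r}f^{(i)},\,f^{(j)}\star_{q_j-r}^{q_j-r}f^{(j)}\rangle$. The modified contractions with $\ell<r$ are handled via $\|f^{(i)}\star_r^\ell f^{(j)}\|^2\le\|f^{(i)}\star_r^\ell f^{(i)}\|\,\|f^{(j)}\star_r^\ell f^{(j)}\|$, which follows by writing the square as an integral and regrouping variables.
\item In \cite{DVZ18} the self-contraction norms (those entering the cumulant, excluding the trivial full contraction) are shown to be dominated by $\bE F^4-3(\bE F^2)^2$. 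Hence they vanish, and consequently ${\rm Var}\,\langle DF_n^{(i)},DF_n^{(j)}\rangle_{\cH}\to0$.
\end{itemize}
If the $\ell<r$ contractions resist the regrouping argument, I would fall back on the Step 2 estimate. This works because the relevant part of the variance can be written as $\bE\int (D_\xi F^{(i)})^2(D_\xi F^{(j)})^2\,\fm(d\xi)$, which is controlled by the Cauchy--Schwarz product of the fourth-power integrals.

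\textbf{Step 4: conclusion.} Combining Steps 1--3, $\bE h(F_n)\to\bE h(Z)$ for all $h\in C^3$ with bounded derivatives. This class of test functions is convergence-determining, which yields $F_n\stackrel{d}{\longrightarrow} Z\sim\cN(\pmb{0},\pmb{\Sigma})$.
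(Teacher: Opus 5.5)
\textbf{There is a gap at the step everything rests on.} The paper does not prove this statement; it quotes it from \cite{DVZ18}. There it is deduced from a quantitative bound obtained by exchangeable pairs and formulated through the carr\'e du champ $\Gamma$, so no contraction analysis is needed. Your route (the Peccati--Zheng bound in terms of $\langle DF^{(i)},DF^{(j)}\rangle_{\cH}$, followed by contractions) is genuinely different, and its skeleton is sound. In particular, for $f\in\cH^{\odot p}$, $g\in\cH^{\odot q}$ your inequality $\|f\star_r^{\ell}g\|^2\le\|f\star_r^{\ell}f\|\,\|g\star_r^{\ell}g\|$ is correct: regrouping gives $\|f\star_r^{\ell}g\|^2=\langle f\star_{p-\ell}^{p-r}f,\,g\star_{q-\ell}^{q-r}g\rangle$, and $\|f\star_r^{\ell}f\|=\|f\star_{p-\ell}^{p-r}f\|$. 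However, the key step is asserted with the wrong source and the wrong mechanism.

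\textbf{What you need and why it is not available.} You need that $\bE F_n^4-3(\bE F_n^2)^2\to0$ for $F_n=I_q(f_n)$ forces $\|f_n\star_r^{\ell}f_n\|\to0$ for all $1\le\ell\le r\le q$ with $(r,\ell)\neq(q,q)$. Your diagonal claim ${\rm Var}(q^{-1}\|DF_n\|_{\cH}^2)\to0$ depends on this too. This is not what \cite{DVZ18} provides.

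It does not follow from the D\"obler--Peccati identity. That identity controls ${\rm Var}\,\Gamma(F,F)$ and $\bE\int_Z(D_\xi F)^4\fm(d\xi)$, and on the Poisson space $\Gamma(F,F)=\|DF\|_{\cH}^2+\frac12\delta\big((DF)^2\big)$, not $\|DF\|_{\cH}^2$.

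Nor does it follow from the chaos expansion of $F^2$. In
$\bE F^4-3\sigma^4=\sum_{k=1}^{2q-1}k!\|h_k\|^2+(q!)^2\sum_{r=1}^{q-1}\binom{q}{r}^2\|f\otimes_r f\|^2$
(proof of Lemma \ref{cumulant}, which is the reverse inequality), only the $\|f\otimes_r f\|$ are dominated individually. Each $h_k$ in \eqref{def-hpm} is a symmetrized combination of different modified contractions, and these may cancel. Already for $q=3$ one has $h_3=36\,f\,\widetilde{\star}_2^1f+6f^2$, so $\|h_3\|\to0$ controls neither $\|f\star_2^1f\|$ nor $\|f\|_{L^4}^2$.

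To separate them you need a second, independent combination. For example, the chaos-two kernel of $I_2(f(\xi,\cdot))^2$, which enters $\bE\int_Z(D_\xi F)^4\fm(d\xi)$, is proportional to $(2\,f\star_2^1f+f^2)(\xi,\cdot)$. The general statement is a genuine theorem: the analytic form of the Poisson fourth moment theorem, proved by D\"obler and Peccati in \cite{DP18-ECP}. Citing it closes your argument, diagonal terms included.

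\textbf{Your fallback does not work.} ${\rm Var}\int_Z D_\xi F\,D_\xi G\,\fm(d\xi)$ is a double integral of covariances, not $\bE\int_Z(D_\xi F)^2(D_\xi G)^2\fm(d\xi)$.

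(Minor: your H\"older exponents are not scale-homogeneous. You must interpolate with $\bE\|DF\|_{\cH}^2$, which is harmless.)
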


\begin{lemma}
\label{lem-abc}
{
Suppose that either one of conditions (i)-(iii) of Theorem \ref{cov-th} hold. In case (iii), we assume in addition that $\k \in L^2(\bR^2)$.} For any $t>0$ and $n\geq 1$, 
\begin{equation}
\label{CLT1}
\frac{1}{\sigma_{n,R}(t)}I_n\big(g_{n,R}(\cdot, t)\big) \stackrel{d}{\to} N(0,1) \quad \mbox{as $R \to \infty$},
\end{equation}
or equivalently,
\begin{equation}
\label{CLT2}
\frac{1}{R^{d/2}}I_n\big(g_{n,R}(\cdot, t)\big) \stackrel{d}{\to} N(0,c_n(t)) \quad \mbox{as $R \to \infty$}.
\end{equation}
\end{lemma}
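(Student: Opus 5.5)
We will deduce \eqref{CLT1} from the Fourth Moment Theorem \ref{4th-theorem}, applied to $F_R:=I_n(g_{n,R}(\cdot,t))/\sigma_{n,R}(t)$. First we dispose of the degenerate cases. If $c_n(t)=0$, then \eqref{CLT2} is trivial. This is because $R^{-d}\sigma^2_{n,R}(t)\to 0$ by \eqref{sigma-nR}. Hence we assume $c_n(t)>0$. For $n=1$ the variable $I_1(g_{1,R})$ is a linear functional of $\widehat N$, and we handle it directly: its cumulants of order $k\ge 3$ are $m_k\int_{\bR^d}Q_{1,R}(x)^k\,dx$ with $m_k=\int |z|^k\nu(dz)$. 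Alternatively, the fourth-moment argument below covers $n=1$ as well, using the single term $\int g^4\,d\fm$. The equivalence of \eqref{CLT1} and \eqref{CLT2} follows from Slutsky's lemma and $\sigma_{n,R}^2(t)\sim c_n(t)R^d$.

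Since $\bE F_R^2=1$, it suffices to show $\kappa_4(F_R)=\bE F_R^4-3\to0$, i.e. $\kappa_4(I_n(g_{n,R}))=o(R^{2d})$. We use the cumulant bound of Lemma \ref{cumulant}: $\kappa_4(I_n(g))$ is dominated by a finite linear combination of $\|g*_r^\ell g\|^2_{\cH^{\otimes(2n-r-\ell)}}$ over $1\le r\le n$, $0\le \ell\le r$, with $(r,\ell)\neq(n,n)$. Here $g=\widetilde g_{n,R}$ is symmetrized, which we may replace by the unsymmetrized kernel up to a finite sum over permutations. Because $g_{n,R}=\prod z_i\,Q_{n,R}$, the $z$-integrals factor out as products of moments $m_{2}$, $m_3$, $m_4$. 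Finiteness of these moments needs to be assumed, or absorbed by the convention of the paper's hypotheses. Each squared contraction norm then becomes an integral of a product of four copies of $Q_{n,R}$ over $\bR^{d(4n-2r-\ell)}$ roughly. Among the variables, $\ell$ are "merged" (appearing in two copies simultaneously, coming from the $\ell$ identified-but-not-integrated coordinates) and the rest are paired between copies. Writing $Q_{n,R}=\int_{B_R}\varphi_n(\cdot,x_i;t)\,dx_i$ introduces four base points $x_1,\dots,x_4\in B_R$.

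The main obstacle is estimating these integrals, and here we use the graph-theoretic idea announced in the introduction. We encode each term as a multigraph whose vertices are the four copies, with edges corresponding to shared spatial variables. Edges coming from the $r$-contractions connect copies $1$–$2$ and $3$–$4$, while edges from the squaring connect $1$–$3$ and $2$–$4$ (or a permuted version). Since $r\ge1$ and $(r,\ell)\ne(n,n)$, the graph is connected and has an extra edge beyond a spanning tree. Integrating out the inner space-time variables along a spanning tree yields factors of the form $\int \varphi_n(\cdot,x_i)\varphi_n(\cdot,x_j)$ and its variants, which are covariance-type functions $\rho$ that are integrable in $x_i-x_j$ by Lemma \ref{lem-rho-int}. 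Fixing $x_1$ then gives a factor $R^d$ from the integral over $B_R$, while the remaining three base points are controlled by integrability. The surplus edge must produce the required $o(R^{2d})$, in fact $O(R^{d})$.

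To make the bound uniform, we first bound $\varphi_n$ by simpler functions. In $d=1$ we use $G_t\le C$ for the wave kernel, or Gaussian bounds for the heat kernel. For white noise we use the support $|x_n-x|\le nt$ in the wave case and Gaussian tails in the heat case. For colored noise we use $\kappa\in L^1$. In case (iii) we additionally use $\kappa\in L^2$ (or $L^\infty$) to bound $\varphi_n$ itself uniformly via Young's inequality, since $G_t\in L^p(\bR^2)$ only for $p<2$. With these bounds, a merged variable appearing in four copies is controlled by $\sup|\varphi_n|$ times an $L^1$-integral, so the degree-$\geq 3$ vertices cost at most constants. The bookkeeping then reduces to counting independent base-point integrations over $B_R$: at most two free $R^d$ factors occur only if the graph disconnects, which is excluded, so each term is $O(R^d)=o(R^{2d})$. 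I expect the careful verification that every $(r,\ell)$ configuration yields a connected graph with sufficient integrability, especially the mixed terms with $0<\ell<r$ in case (iii), to be the hardest step.
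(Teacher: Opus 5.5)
\paragraph{Verdict.} The reduction you set up is the paper's, but the step that carries the whole proof is not established, and the mechanism you offer for it fails.

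\paragraph{What matches and where the gap is.} Your framework agrees with the paper's: Theorem \ref{4th-theorem}, the bound of Lemma \ref{cumulant} over $0\le\ell\le r\le n$ with $(\ell,r)\neq(0,0),(n,n)$, and translation invariance. Translation invariance lets one base point range over $B_R$, giving the factor $R^d$, while the other three are integrated over all of $\bR^d$. Your remark that $m_4=\int z^4\nu(dz)<\infty$ is needed is fair, since $m_4^{r-\ell}$ enters \eqref{gg-QQ}. Everything then rests on proving that the remaining $R$-free integral, the quantity $J$ in \eqref{J-finite}, is finite; this is the content of Theorem \ref{gg-lem}. Counting independent base-point integrations gives the power of $R$ only once this finiteness is known, and your proposal does not prove it.

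\paragraph{Why your mechanism fails.} Lemma \ref{lem-rho-int} controls only complete inner products $\int\varphi_n(\pmb y,a_i)\varphi_n(\pmb y,a_j)\,d\pmb y$, i.e.\ $\rho^{(n)}_t(a_i-a_j)$. Such a factor arises only when two copies share all $n$ coordinates and those coordinates appear nowhere else. That happens only in the excluded cases $(\ell,r)\in\{(0,0),(n,n)\}$. For admissible $(\ell,r)$, integrating out the variables shared by two copies leaves a function of all the remaining coordinates, not of $a_i-a_j$ alone. When $\ell<r$, the variables $\pmb u_{r-\ell}$ sit in all four copies at once, so nothing factorizes into pairwise covariances. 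Splitting by Cauchy--Schwarz decouples the copies into two pairs and yields only $O(R^{2d})$. Nor can a whole copy be bounded by $\sup|\varphi_n|$: each of $b_1,b_2,b_3$ occurs in exactly one copy, so doing so leaves $\int db_i=\infty$.

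\paragraph{The missing idea.} In the paper, each $\varphi_{n,\rho_i}$ is unfolded, through the time simplex and the convolution with $\k$, into a product of two-point kernels $G_{\delta_j^{(i)}}(\cdot-\cdot)$ and $\k(\cdot-\cdot)$. These kernels are treated as edges of a multigraph on the spatial variables, rooted at the fixed point $0$. One then chooses a spanning tree containing every kernel that may be unbounded: all $\k$-edges in case (ii), and all $G$-edges in case (iii), where $G_t\notin L^\infty(\bR^2)$. Non-tree edges are bounded in sup norm, and tree edges are integrated leaf by leaf into $L^1$ norms (Lemma \ref{lemB}). The resulting powers of $\delta_j^{(i)}$ are integrable by \eqref{delta-int}; in the heat case each dropped kernel costs only an exponent $-1/2$.

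\paragraph{An alternative completion.} Your sketch could also be finished by a mixed sup/$L^1$ peeling:
\begin{itemize}
\item integrate $b_3$ out of the fourth copy, taking the supremum over its other coordinates;
\item integrate $(\pmb x'_\ell,b_2)$ out of the third copy, taking the supremum over $(\pmb u_{r-\ell},\pmb v_{n-r})$;
\item integrate $(\pmb v'_{n-r},b_1)$ out of the second copy, taking the supremum over $(\pmb x_\ell,\pmb u_{r-\ell})$;
\item finish with $\int\varphi_n(\pmb y,0)\,d\pmb y<\infty$.
\end{itemize}
The supremum sets are nonempty exactly because $\ell<n$ and $r\ge1$. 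However, each bound $\sup_{\pmb y_A}\int\varphi_n(\pmb y,b)\,d\pmb y_{A^c}\,db<\infty$ with $A\neq\emptyset$ again needs the same kernel-level tree argument on a single copy. Your phrase ``$\sup|\varphi_n|$ times an $L^1$-integral'' does not supply this.

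\paragraph{Young's inequality in case (iii).} Your use of Young's inequality with $\k\in L^2$ is wrong. Since $G_t\notin L^2(\bR^2)$ (the integral $\int_{|x|<t}(t^2-|x|^2)^{-1}dx$ diverges), $G_t*\k$ need not be bounded for $\k\in L^1\cap L^2$. You need $\k\in L^q$ for some $q>2$. The paper's own route to this lemma goes through Theorem \ref{gg-lem}, which assumes $\k\in L^\infty(\bR^2)$.
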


\begin{proof}
It is enough to prove \eqref{CLT1}. Then \eqref{CLT2} will follow, since $\sigma_{n,R}^2(t) \sim c_n(t) R^d$ by \eqref{sigma-nR}.

We drop $t$ from the notation. 
By Theorem \ref{4th-theorem}, we have to prove that
\begin{equation}
\label{4mom-I1}
\lim_{R \to \infty}\frac{\bE\big[I_n^4(g_{n,R})]}{\sigma_{n,R}^4} = 3,
\end{equation}
that is
\begin{equation}
\label{4mom-I2}
\lim_{R \to \infty}\frac{\bE\big[I_n^4(g_{n,R})]-3\sigma_{n,R}^4}{\sigma_{n,R}^4}=0.
\end{equation}

We use the fact that $I_n(g_{n,R})=I_n(\widetilde{g}_{n,R})$.
By Lemma \ref{cumulant}, $\bE\big[I_n^4(\widetilde{g}_{n,R})]-3\sigma_{n,R}^4$ is bounded above by a linear combination of terms of the form
\[
\| \widetilde{g}_{n,R} \star_r^{\ell} \widetilde{g}_{n,R} \|_{\cH^{\otimes(2n-r-\ell)}}^2 \quad \mbox{with $0\leq \ell \leq r \leq n$ and $(\ell,r) \not=(n,n)$}.
\]

Relation \eqref{4mom-I2} follows by Theorem \ref{gg-lem} below, using the fact that $\sigma_{n,R}^4 \sim c_n^2 R^{2d}$.
\end{proof}

\begin{theorem}
\label{gg-lem}
{Suppose that either one of conditions (i)-(iii) of Theorem \ref{cov-th} hold. In case (iii), we assume in addition that $\k \in L^{\infty}(\bR^2)$.}

Let $g_{n,R}=g_{n,R}(\cdot, t)$ for fixed $t>0$ and $n\geq 1$.
For any $0\leq \ell \leq r \leq n$ with $(\ell,r) \not=(n,n)$ and $(\ell,r) \not=(0,0)$, we have:
\[
\lim_{R\to \infty}\frac{1}{R^{2d}}\| \widetilde{g}_{n,R} \star_r^{\ell} \widetilde{g}_{n,R} \|_{\cH^{\otimes(2n-r-\ell)}}^2 =0.
\]
\end{theorem}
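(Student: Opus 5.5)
Since $g_{n,R}=\prod z_i\, Q_{n,R}$, the $z$-variables factor out of every contraction. In the modified contraction $\star_r^\ell$, the $\ell$ integrated pairs contribute $m_2^{\ell}$, the $r-\ell$ identified-but-not-integrated pairs contribute $\int |z|^4\nu(dz)$ (which we may assume finite; otherwise the fourth moment theory is void), and the free variables contribute $m_2$ in the squared norm. So it suffices to prove
\[
\frac{1}{R^{2d}}\int_{(\bR^d)^{2n-r-\ell}}\Big|\int_{(\bR^d)^{\ell}}\widetilde Q_{n,R}(\pmb{y},\pmb{a},\pmb{b})\,\widetilde Q_{n,R}(\pmb{y},\pmb{a},\pmb{c})\,d\pmb{y}\Big|^2 d\pmb{a}\,d\pmb{b}\,d\pmb{c}\to 0,
\]
where $\pmb{y}\in(\bR^d)^\ell$, $\pmb{a}\in(\bR^d)^{r-\ell}$ and $\pmb{b},\pmb{c}\in(\bR^d)^{n-r}$. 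Expanding the symmetrization into $(n!)^{4}$ terms, we write $Q_{n,R}=\int_{B_R}\int_{T_n(t)}(\cdots)$ and use $\varphi_n\ge 0$. The squared norm then becomes a finite sum of integrals over four spatial points $x_1,\dots,x_4\in B_R$ (one per copy of $Q$) and four time simplices. Each integrand is a product of four chains of heat or wave kernels convolved with $\k$, glued together by identifications of the spatial variables dictated by the permutations and by $(\ell,r)$.

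We encode each such term as a graph. Vertices are the points $x_j$ together with all intermediate spatial variables. Edges are the kernels $G_{t_{i+1}-t_i}$, with a $\k$-convolution attached, or $f=\k*\widetilde\k$ after integrating a shared variable. Identified variables are merged into single vertices, and those of multiplicity three or four come from $r>\ell$. The four chains form four "arms" rooted at $x_1,\dots,x_4$. The contraction pattern ensures that the resulting graph is connected, i.e.\ the four arms are glued into a single component; this is exactly where $(\ell,r)\neq(0,0)$ is used.

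The key estimate integrates the vertices along a spanning tree, leaf first. Each tree edge absorbs one spatial integral through $\int G_s(x)\,dx\le C t$ (or $=1$ for heat) and $\|\k\|_{L^1}$, while the non-tree edges are bounded in sup norm. After this, only one of the four roots $x_j$ stays free over $B_R$, which gives a factor $R^d$ instead of $R^{2d}$, and the time integrals over $T_n(t)$ remain finite. This yields $O(R^{d})=o(R^{2d})$. More carefully, connectivity alone gives at most $R^{d}$ from a root, but the roots are joined only through kernel edges, so they are integrated against integrable kernels.

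The main obstacle is that non-tree edges cannot always be bounded in sup norm. For the heat kernel, $\sup G_s\sim s^{-d/2}$ is singular in time. For $d=2$ wave, $G_t$ is unbounded, and in case (i) there is no $\k$ smoothing at all, so merged vertices produce products like $G(x-y)^2$. I would handle this by choosing the spanning tree so that every cycle-closing edge carries a $\k$-convolution, bounded via $\|G_s*\k\|_\infty\le\|G_s\|_{L^1}\|\k\|_\infty$; this is where $\k\in L^\infty$ enters in case (iii). For $d=1$ wave, $G\le 1/2$ is bounded trivially. For $d=1$ heat with $\k=\delta_0$, I would use $\|G_s\|_\infty\le Cs^{-1/2}$ and check that each simplex contributes integrable time singularities, using Cauchy–Schwarz on doubled edges, $\int G_s^2=Cs^{-1/2}$. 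The combinatorial step requires showing that such a spanning tree exists for every permutation and contraction pattern; this is the graph-theoretic lemma I expect to be the hardest part.
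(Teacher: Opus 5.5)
Your plan is essentially the paper's proof: factor out the $z$-moments, expand the $(n!)^4$ symmetrization terms, encode each as a connected multigraph (connectivity uses both exclusions $(\ell,r)\neq(0,0)$ and $(\ell,r)\neq(n,n)$), bound non-tree edges in $L^\infty$ and integrate tree edges leaf by leaf in $L^1$ so that only one of the four $B_R$-points survives (the paper does this via the shift by $a_1$, giving the bound $\omega_d R^d J$ with $J<\infty$), and finish with the Dirichlet-type time integral. Two simplifications are available. First, the spanning tree you expect to be the hardest step is immediate: the edges you must keep form a forest, which always extends to a spanning tree of a connected graph; these are all $G$-edges in case (iii), and all $\k$-edges in case (ii), which is the mirror-image choice you need there because $\k$ need not be bounded. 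Second, no Cauchy--Schwarz is needed for parallel edges, since each kernel carries its own increment $\delta_j^{(i)}$, so the $L^\infty$ bound costs at most an integrable factor $C(\delta_j^{(i)})^{-1/2}$ per increment.
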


\begin{proof}
We consider the variables 
\begin{align*}
\pmb{\gamma}_{r-\ell}&=(\gamma_1,\ldots,\gamma_{r-\ell}) \quad \mbox{with} \quad \gamma_i=(u_i,\zeta_i) \in \bR^d \times \bR_0, \ \mbox{for} \ i=1,\ldots,r-\ell \\
\pmb{y}_{n-r}  &=(y_1,\ldots,y_{n-r}) \quad \mbox{with} \quad y_i=(v_i,a_i) \in \bR^d \times \bR_0,  \ \mbox{for} \ i=1,\ldots,n-r \\
\pmb{y}_{n-r}' &=(y_1',\ldots,y_{n-r}') \quad \mbox{with} \quad y_i'=(v_i',a_i') \in \bR^d \times \bR_0,  \ \mbox{for} \ i=1,\ldots,n-r \\
\pmb{\xi}_{\ell}&=(\xi_1,\ldots,\xi_{\ell}) \quad \quad \ \mbox{with} \quad \xi_i=(x_i,z_i) \in \bR^d \times \bR_0,  \ \mbox{for} \ i=1,\ldots,\ell.
\end{align*}

By definition of  $\star_r^{\ell}$ (see Appendix \ref{appA}) and relation \eqref{def-gnR},
\begin{align*}
\big(\widetilde{g}_{n,R} \star_r^{\ell} \widetilde{g}_{n,R} \big)(\pmb{\gamma}_{r-\ell},\pmb{y}_{n-r},\pmb{y}_{n-r}' ) & =\int_{Z^{\ell}} 
\widetilde{g}_{n,R} (\pmb{\xi}_{\ell},\pmb{\gamma}_{r-\ell},\pmb{y}_{n-r})\widetilde{g}_{n,R} (\pmb{\xi}_{\ell},\pmb{\gamma}_{r-\ell},\pmb{y}_{n-r}') \fm^{\otimes \ell}(d\pmb{\xi}_{\ell}) \\
&= m_2^{\ell} \prod_{i=1}^{r-\ell} \zeta_i^2 \prod_{i=1}^{n-r}a_i \prod_{i=1}^{n-r}a_i'
\big(\widetilde{Q}_{n,R} \star_r^{\ell} \, \widetilde{Q}_{n,R} \big)(\pmb{u}_{r-\ell},\pmb{v}_{n-r},\pmb{v}_{n-r}' \big),
\end{align*}
where $\pmb{u}_{r-\ell}=(u_1,\ldots,u_{r-\ell})$, $\pmb{v}_{n-r}=(v_1,\ldots,v_{n-r})$ and $\pmb{v}_{n-r}'=(v_1,\ldots,v_{n-r}')$. Hence,
\begin{align}
\nonumber
\big\|\widetilde{g}_{n,R} \star_r^{\ell} \widetilde{g}_{n,R} \big\|_{\cH^{\otimes (2n-r-\ell)}}^2 & =\int_{Z^{2n-r-\ell}} \big[\big(\widetilde{g}_{n,R} \star_r^{\ell} \widetilde{g}_{n,R} \big)(\pmb{\gamma}_{r-\ell},\pmb{y}_{n-r},\pmb{y}_{n-r}' ) \big]^2 \fm^{\otimes(2n-r-\ell)}(d\pmb{\gamma}_{r-\ell},d\pmb{y}_{n-r},d\pmb{y}_{n-r}' )\\
\label{gg-QQ}
& =m_2^{2\ell+2(n-r)} m_4^{r-\ell} \, \|\widetilde{Q}_{n,R} \star_r^{\ell} \widetilde{Q}_{n,R} \|_{L^2(\bR^{d(2n-r-\ell)})}^2.
\end{align}

We evaluate separately the last norm above. We let $\pmb{x}_{\ell}=(x_1,\ldots,x_{\ell})$. Then,
\begin{align}
\nonumber
& \|\widetilde{Q}_{n,R} \star_r^{\ell} \widetilde{Q}_{n,R} \|_{L^2(\bR^{d(2n-r-\ell)})}^2  \\
\nonumber
& \quad =\int_{(\bR^d)^{2n-r-\ell}} \left[\int_{\bR^{d\ell}}  \widetilde{Q}_{n,R} (\pmb{x}_{\ell},\pmb{u}_{r-\ell},\pmb{v}_{n-r}) \widetilde{Q}_{n,R}(\pmb{x}_{\ell},\pmb{u}_{r-\ell},\pmb{v}'_{n-r}) d\pmb{x}_{\ell} \right]^2 d\pmb{u}_{r-\ell} d\pmb{v}_{n-r} d\pmb{v}'_{n-r}\\
\nonumber
& \quad =\int_{(\bR^d)^{2n-r-\ell}} \int_{(\bR^d)^{\ell}} \int_{(\bR^d)^{\ell}}  \widetilde{Q}_{n,R} (\pmb{x}_{\ell},\pmb{u}_{r-\ell},\pmb{v}_{n-r}) \widetilde{Q}_{n,R}(\pmb{x}_{\ell},\pmb{u}_{r-\ell},\pmb{v}'_{n-r}) 
\\
\label{norm-Q}
 & \qquad  \qquad \qquad \qquad  \widetilde{Q}_{n,R} (\pmb{x}_{\ell}',\pmb{u}_{r-\ell},\pmb{v}_{n-r}) \widetilde{Q}_{n,R}(\pmb{x}_{\ell}',\pmb{u}_{r-\ell},\pmb{v}'_{n-r})
 d\pmb{x}_{\ell} d\pmb{x}'_{\ell} d\pmb{u}_{r-\ell} d\pmb{v}_{n-r} d\pmb{v}'_{n-r}.
\end{align}

Since
\[
\widetilde{Q}_{n,R}(x_1,\ldots,x_n;t)=\int_{B_R} \widetilde{\varphi}_{n}(x_1,\ldots,x_n,x;t)dx,
\]
we have:
\begin{align*}
& \|\widetilde{Q}_{n,R} \star_r^{\ell} \widetilde{Q}_{n,R} \|_{L^2(\bR^{d(2n-r-\ell)})}^2  =\int_{(\bR^d)^{2n-r+\ell}} \int_{B_R^4} \widetilde{\varphi}_{n} (\pmb{x}_{\ell},\pmb{u}_{r-\ell},\pmb{v}_{n-r},a_1;t) \widetilde{\varphi}_{n}(\pmb{x}_{\ell},\pmb{u}_{r-\ell},\pmb{v}'_{n-r},a_2;t) \\
& \quad \quad \quad \quad \widetilde{\varphi}_{n} (\pmb{x}_{\ell}',\pmb{u}_{r-\ell},\pmb{v}_{n-r},a_3;t) \widetilde{\varphi}_{n}(\pmb{x}_{\ell}',\pmb{u}_{r-\ell},\pmb{v}'_{n-r},a_4;t)
d\pmb{a}_4 \, d\pmb{x}_{\ell} \, d\pmb{x}'_{\ell} \, d\pmb{u}_{r-\ell} \, d\pmb{v}_{n-r} \, d\pmb{v}'_{n-r},
\end{align*}
where $\pmb{a}_4=(a_1,a_2,a_3,a_4)$. Note that for any $a \in \bR^d$,
\begin{equation}
\label{a-minus}
\widetilde{\varphi}_n(x_1,\ldots,x_n,x;t)=
\widetilde{\varphi}_n(x_1-a,\ldots,x_n-a,x-a;t).
\end{equation}

We use \eqref{a-minus} with $a=a_1$, followed by the change of variables 
\[
(\overline{\pmb{x}}_{\ell},\overline{\pmb{x}'}_{\ell}, \overline{\pmb{u}}_{r-\ell},\overline{\pmb{v}}_{n-r},\overline{\pmb{v}'}_{n-r})=
(\pmb{x}_{\ell}-a_1,\pmb{x}'_{\ell}-a_1,\pmb{u}_{r-\ell}-a_1,\pmb{v}_{n-r}-a_1,\pmb{v}'_{n-r}-a_1),
\]
where $\pmb{x}_{\ell}-a_1=(x_1-a_1,\ldots,x_{\ell}-a_1)$ and a similar notation for the other variables. Hence,
\begin{align*}
& \|\widetilde{Q}_{n,R} \star_r^{\ell} \widetilde{Q}_{n,R} \|_{L^2(\bR^{d(2n-r-\ell)})}^2  =\int_{(\bR^d)^{2n-r+\ell}} \int_{B_R^4} \widetilde{\varphi}_{n} (\overline{\pmb{x}}_{\ell},\overline{\pmb{u}}_{r-\ell},\overline{\pmb{v}}_{n-r},0;t) \widetilde{\varphi}_{n}(\overline{\pmb{x}}_{\ell},\overline{\pmb{u}}_{r-\ell},\overline{\pmb{v}}'_{n-r},a_2-a_1;t) \\
& \quad \quad \quad  \widetilde{\varphi}_{n} (\overline{\pmb{x}}_{\ell}',\overline{\pmb{u}}_{r-\ell},\overline{\pmb{v}}_{n-r},a_3-a_1;t) \widetilde{\varphi}_{n}(\overline{\pmb{x}}_{\ell}',\overline{\pmb{u}}_{r-\ell},\overline{\pmb{v}}'_{n-r},a_4-a_1;t)
d\pmb{a}_4 \, d\overline{\pmb{x}}_{\ell} \, d\overline{\pmb{x}}'_{\ell} \, d\overline{\pmb{u}}_{r-\ell} \, d\overline{\pmb{v}}_{n-r} \, d\overline{\pmb{v}}'_{n-r}.
\end{align*}
Next, we use the change of variables $b_1=a_2-a_1$, $b_2=a_3-a_1$, $b_3=a_4-a_1$. We denote by $B_R(a)=\{x \in \bR^d;|x-a|\leq R\}$ the ball of center $a \in \bR^d$ and radius $R$. We obtain:
\begin{align*}
& \|\widetilde{Q}_{n,R} \star_r^{\ell} \widetilde{Q}_{n,R} \|_{L^2(\bR^{d(2n-r-\ell)})}^2  =\int_{(\bR^d)^{2n-r+\ell}} \int_{B_R} \int_{(B_R(-a_1))^3} \widetilde{\varphi}_{n} (\overline{\pmb{x}}_{\ell},\overline{\pmb{u}}_{r-\ell},\overline{\pmb{v}}_{n-r},0;t) \widetilde{\varphi}_{n}(\overline{\pmb{x}}_{\ell},\overline{\pmb{u}}_{r-\ell},\overline{\pmb{v}}'_{n-r},b_1;t) \\
& \quad \quad \quad  \widetilde{\varphi}_{n} (\overline{\pmb{x}}_{\ell}',\overline{\pmb{u}}_{r-\ell},\overline{\pmb{v}}_{n-r},b_2;t) \widetilde{\varphi}_{n}(\overline{\pmb{x}}_{\ell}',\overline{\pmb{u}}_{r-\ell},\overline{\pmb{v}}'_{n-r},b_3;t)
d\pmb{b}_3 da_1 \, d\overline{\pmb{x}}_{\ell} \, d\overline{\pmb{x}}'_{\ell} \, d\overline{\pmb{u}}_{r-\ell} \, d\overline{\pmb{v}}_{n-r} \, d\overline{\pmb{v}}'_{n-r},
\end{align*}
where $\pmb{b}_3=(b_1,b_2,b_3)$.
We bound the integral above, by passing from $(B_R(-a_1))^3$ to $(\bR^d)^3$ so that the integrand becomes independent of $a_1$. Recall that the volume of $B_R$ is
\[
{\rm Vol}(B_R)=\omega_d R^d \quad \mbox{where} \quad \omega_d=\frac{2\pi^{d/2}}{d \Gamma(d/2)}.
\]
This leads to:
\begin{equation}
\label{QQ-bound}
\|\widetilde{Q}_{n,R} \star_r^{\ell} \widetilde{Q}_{n,R} \|_{L^2(\bR^{d(2n-r-\ell)})}^2  \leq \omega_d R^d J,
\end{equation}
where
\begin{align*}
J &:= \int_{(\bR^d)^{2n-r+\ell+3}}  \widetilde{\varphi}_{n} (\pmb{x}_{\ell},\pmb{u}_{r-\ell},\pmb{v}_{n-r},0;t) \widetilde{\varphi}_{n}(\pmb{x}_{\ell},\pmb{u}_{r-\ell},\pmb{v}'_{n-r},b_1;t) \\
& \quad \quad \quad  \widetilde{\varphi}_{n} (\pmb{x}_{\ell}',\pmb{u}_{r-\ell},\pmb{v}_{n-r},b_2;t) \widetilde{\varphi}_{n}(\pmb{x}_{\ell}',\pmb{u}_{r-\ell},\pmb{v}'_{n-r},b_3;t)
d\pmb{x}_{\ell} \, d\pmb{x}'_{\ell} \, d\pmb{u}_{r-\ell} \, d\pmb{v}_{n-r} \, d\pmb{v}'_{n-r}  d\pmb{b}_3.
\end{align*}

By \eqref{gg-QQ} and \eqref{QQ-bound}, it follows that:
\[
\frac{1}{R^{2d}}\big\|\widetilde{g}_{n,R} \star_r^{\ell} \widetilde{g}_{n,R} \big\|_{\cH^{\otimes (2n-r-\ell)}}^2 \leq m_2^{2\ell+2(n-r)} m_4^{r-\ell} \omega_d \frac{1}{R^d} J.
\]

Therefore, the proof of the theorem will be complete, once we show that
\begin{equation}
\label{J-finite}
\fbox{$J<\infty$}.
\end{equation}

\underline{We now start the proof of \eqref{J-finite}.} This will require a major effort. { We will use graph theory. The graph will be defined differently for the white noise, and the colored noise.}

First, by definition,
$$
\widetilde{\varphi}_n(,\cdot,x;t)=\frac{1}{n!}\sum_{\rho \in S_n}\varphi_{n,\rho}(\cdot,x;t),$$
where $\varphi_{n,\rho}(x_1,\ldots,x_n,,x;t)=\varphi_{n}(x_{\rho(1)},\ldots, x_{\rho(n)},x;t)$. Hence,
\begin{align*}
J & = \frac{1}{(n!)^4} \sum_{\rho_1,\ldots,\rho_4 \in S_n} J_{\rho_1,\ldots,\rho_4},
\end{align*}
where 
\begin{align}
\nonumber
& J_{\rho_1,\ldots,\rho_4} =\int_{(\bR^d)^{2n-r+\ell+3}}  \varphi_{n,\rho_1} (\pmb{x}_{\ell},\pmb{u}_{r-\ell},\pmb{v}_{n-r},0;t) \varphi_{n,\rho_2}(\pmb{x}_{\ell},\pmb{u}_{r-\ell},\pmb{v}'_{n-r},b_1;t) \\
\label{J-rho-def}
& \quad  \varphi_{n,\rho_3} (\pmb{x}_{\ell}',\pmb{u}_{r-\ell},\pmb{v}_{n-r},b_2;t) \varphi_{n,\rho_4}(\pmb{x}_{\ell}',\pmb{u}_{r-\ell},\pmb{v}'_{n-r},b_3;t)
d\pmb{x}_{\ell} \, d\pmb{x}'_{\ell} \, d\pmb{u}_{r-\ell} \, d\pmb{v}_{n-r} \, d\pmb{v}'_{n-r} d\pmb{b}_3.
\end{align}
Therefore, to prove \eqref{J-finite}, it is enough to show that for any $\rho_1,\ldots,\rho_4 \in S_n$ fixed,
\begin{equation}
\label{J-rho}
J_{\rho_1,\ldots\rho_4}<\infty.
\end{equation}

We consider separately the white noise and the colored noise.

\bigskip

{\bf Case 1. (white noise, $d=1$)} {In this case, $\varphi_n(\cdot,x;t)=f_n(\cdot,x;t)$ and
\begin{align*}
& J_{\rho_1,\ldots,\rho_4} =\int_{(\bR^d)^{2n-r+\ell+3}}  f_{n,\rho_1} (\pmb{x}_{\ell},\pmb{u}_{r-\ell},\pmb{v}_{n-r},0;t) f_{n,\rho_2}(\pmb{x}_{\ell},\pmb{u}_{r-\ell},\pmb{v}'_{n-r},b_1;t) \\
& \quad  f_{n,\rho_3} (\pmb{x}_{\ell}',\pmb{u}_{r-\ell},\pmb{v}_{n-r},b_2;t) f_{n,\rho_4}(\pmb{x}_{\ell}',\pmb{u}_{r-\ell},\pmb{v}'_{n-r},b_3;t)
d\pmb{x}_{\ell} \, d\pmb{x}'_{\ell} \, d\pmb{u}_{r-\ell} \, d\pmb{v}_{n-r} \, d\pmb{v}'_{n-r} d\pmb{b}_3,
\end{align*}
where $f_{n,\rho}(x_1,\ldots,x_n,,x;t)=f_{n}(x_{\rho(1)},\ldots, x_{\rho(n)},x;t)$.
We write
\begin{align*}
f_n(x_1,\ldots,x_n,x;t)&=\int_{T_n(t)} G_{t-t_n}(x-x_n) \ldots G_{t_2-t_1}(x_2-x_1) d\pmb{t}\\
&=\int_{D_n(t)} h_n(x_1,\ldots,x_n,\pmb{\delta},x;t) d\pmb{\delta},
\end{align*}
}
where $\pmb{\delta}=(\delta_1,\ldots,\delta_n)$, $D_n(t)=\{\pmb{\delta} \in [0,t]^n;\delta_1+\ldots+\delta_n<t\}$, and we denote:
\[
h_n(x_1,\ldots,x_n,\pmb{\delta},x;t):=G_{\delta_n}(x-x_n) \ldots G_{\delta_2}(x_2-x_1).
\]

Similarly, for any permutation $\rho \in S_n$,
\[
f_{n,\rho}(x_1,\ldots,x_n,x;t)=\int_{D_n(t)} h_{n,\rho}(x_1,\ldots,x_n,\pmb{\delta},x;t) d\pmb{\delta}
\]
where $h_{n,\rho}(x_1,\ldots,x_n,\pmb{\delta},x;t)=h_{n}(x_{\rho(1)},\ldots, x_{\rho(n)},\pmb{\delta},x;t)$. It follows that
\begin{equation}
\label{J-rho14}
J_{\rho_1,\ldots\rho_4}=\int_{D_n^4(t)} J_{\rho_1,\ldots,\rho_4}(\pmb{\delta}^{(1)},\ldots,\pmb{\delta}^{(4)})
d\pmb{\delta}^{(1)} \ldots d\pmb{\delta}^{(4)},
\end{equation}
where
\begin{align*}
& J_{\rho_1,\ldots,\rho_4}(\pmb{\delta}^{(1)},\ldots, \pmb{\delta}^{(4)}) =\int_{(\bR^d)^{2n-r+\ell+3}}  h_{n,\rho_1} (\pmb{x}_{\ell},\pmb{u}_{r-\ell},\pmb{v}_{n-r},\pmb{\delta}^{(1)},0;t)  \\
& \quad \quad \quad \quad \quad h_{n,\rho_2}(\pmb{x}_{\ell},\pmb{u}_{r-\ell},\pmb{v}'_{n-r},\pmb{\delta}^{(2)},b_1;t)   h_{n,\rho_3}(\pmb{x}'_{\ell},\pmb{u}_{r-\ell},\pmb{v}_{n-r},\pmb{\delta}^{(3)},b_2;t)  \\
& \quad \quad \quad \quad \quad h_{n,\rho_4}(\pmb{x}'_{\ell},\pmb{u}_{r-\ell},\pmb{v}'_{n-r},\pmb{\delta}^{(4)},b_3;t)  \, \, d\pmb{x}_{\ell} \, d\pmb{x}'_{\ell} \, d\pmb{u}_{r-\ell} \, d\pmb{v}_{n-r} \, d\pmb{v}'_{n-r} d\pmb{b}_3.
\end{align*}

\underline{Consider first the case when all permutations $\rho_i$'s are the identity.} In this case, $h_{n,\rho_i}=h_n$ for all $i=1,\ldots,4$. We call the corresponding integral $J_0(\pmb{\delta}^{(1)},\ldots, \pmb{\delta}^{(4)})$, and we let
\[
J_0:=\int_{D_n^4(t)} J_0(\pmb{\delta}^{(1)},\ldots,\pmb{\delta}^{(4)})d\pmb{\delta}^{(1)} \ldots d\pmb{\delta}^{(4)}.
\]

We will prove below that \fbox{$J_0<\infty$}, i.e. \eqref{J-rho} holds when all $\rho_i$'s are the identity.

\medskip
To see what $J_0(\pmb{\delta}^{(1)},\ldots,\pmb{\delta}^{(4)})$ looks like, we consider the example $n=7$, $\ell=2$ and $r=5$. Then
\begin{align*}
& J_{0}(\pmb{\delta}^{(1)},\ldots, \pmb{\delta}^{(4)}) = \int_{\bR^{14}}  d\pmb{x}_{2} \, d\pmb{x}'_{2} \, d\pmb{u}_{3} \, d\pmb{v}_{2} \, d\pmb{v}'_{2} d\pmb{b}_3 \\
&  G_{\delta_7^{(1)}}(0-v_2)
G_{\delta_6^{(1)}}(v_2-v_1) G_{\delta_5^{(1)}}(v_1-u_3)
G_{\delta_4^{(1)}}(u_3-u_2)G_{\delta_3^{(1)}}(u_2-u_1)
G_{\delta_2^{(1)}}(u_1-x_2) G_{\delta_1^{(1)}}(x_2-x_1)  \\
& G_{\delta_7^{(2)}}(b_1-v_2')
G_{\delta_6^{(2)}}(v_2'-v_1') G_{\delta_5^{(2)}}(v_1'-u_3)
G_{\delta_4^{(2)}}(u_3-u_2)G_{\delta_3^{(2)}}(u_2-u_1)
G_{\delta_2^{(2)}}(u_1-x_2) G_{\delta_1^{(2)}}(x_2-x_1) \\
& G_{\delta_7^{(3)}}(b_2-v_2)
G_{\delta_6^{(3)}}(v_2-v_1) G_{\delta_5^{(3)}}(v_1-u_3)
G_{\delta_4^{(3)}}(u_3-u_2)G_{\delta_3^{(3)}}(u_2-u_1)
G_{\delta_2^{(3)}}(u_1-x_2') G_{\delta_1^{(3)}}(x_2'-x_1') \\
& G_{\delta_7^{(4)}}(b_3-v_2')
G_{\delta_6^{(4)}}(v_2'-v_1') G_{\delta_5^{(4)}}(v_1'-u_3)
G_{\delta_4^{(4)}}(u_3-u_2)G_{\delta_3^{(4)}}(u_2-u_1)
G_{\delta_2^{(4)}}(u_1-x_2') G_{\delta_1^{(4)}}(x_2'-x_1').
\end{align*}

To represent the product of the $4n$ factors $G_{\delta_i^{(j)}}$ with $i=1,\ldots,n$ and $j=1,\ldots,4$, we use a multigraph {$G_{\rm white}=(V_{
\rm white},E_{\rm white})$} with the set of vertices:
\[
V_{\rm white}=\{x_1,\ldots,x_{\ell},x_1',\ldots,x_{\ell}',u_1,\ldots, u_{r-\ell},v_1,\ldots,v_{n-r},v_1',\ldots,v_{n-r}',0,b_1,b_2,b_3\}.
\]
We connect two vertices $u$ and $v$ by an edge if the product $G_{\delta_i^{(j)}}$ appears in the product above for some $i=1,\ldots,n$ and $j=1,\ldots,4$. Figure \ref{pic-J0} shows this graph for $n=7$, $\ell=2$ and $r=5$.

\begin{figure}[htbp]
    \centering
\begin{tikzpicture}[
    vertex/.style={
        circle,
        draw=black,
        fill=white,
        minimum size=6mm,
        inner sep=1pt,
        font=\small
    },
    blueedge/.style={draw=blue!70!black, thick},
    rededge/.style={draw=magenta!75!black, thick},
    greenedge/.style={draw=green!55!black, thick},
    purpleedge/.style={draw=violet!80!black, thick}
]


\node[vertex] (x1) at (0,2) {$x_1$};
\node[vertex] (x2) at (1.5,2) {$x_2$};

\node[vertex] (x1p) at (0,0) {$x_1'$};
\node[vertex] (x2p) at (1.5,0) {$x_2'$};

\node[vertex] (u1) at (3,1.15) {$u_1$};
\node[vertex] (u2) at (4.5,1.15) {$u_2$};
\node[vertex] (u3) at (6,1.15) {$u_3$};

\node[vertex] (v1) at (7.4,2.15) {$v_1$};
\node[vertex] (v2) at (8.9,2.15) {$v_2$};

\node[vertex] (zero) at (10.6,2.65) {$0$};
\node[vertex] (b2)   at (10.6,1.65) {$b_2$};

\node[vertex] (v1p) at (7.4,0.25) {$v_1'$};
\node[vertex] (v2p) at (8.9,0.25) {$v_2'$};

\node[vertex] (b1) at (10.6,0.65) {$b_1$};
\node[vertex] (b3) at (10.6,-0.65) {$b_3$};


\draw[blueedge]
    (x1) to[bend left=22] (x2);

\draw[rededge]
    (x1) to[bend right=22] (x2);

\draw[blueedge]
    (x2) to[bend left=18] (u1);

\draw[rededge]
    (x2) to[bend right=12] (u1);


\draw[greenedge]
    (x1p) to[bend left=22] (x2p);

\draw[purpleedge]
    (x1p) to[bend right=22] (x2p);

\draw[greenedge]
    (x2p) to[bend left=15] (u1);

\draw[purpleedge]
    (x2p) to[bend right=18] (u1);


\draw[blueedge]
    (u1) to[bend left=22] (u2);

\draw[rededge]
    (u1) to[bend left=6] (u2);

\draw[greenedge]
    (u1) to[bend right=7] (u2);

\draw[purpleedge]
    (u1) to[bend right=24] (u2);


\draw[blueedge]
    (u2) to[bend left=22] (u3);

\draw[rededge]
    (u2) to[bend left=6] (u3);

\draw[greenedge]
    (u2) to[bend right=7] (u3);

\draw[purpleedge]
    (u2) to[bend right=24] (u3);


\draw[blueedge]
    (u3) to[bend left=15] (v1);

\draw[greenedge]
    (u3) to[bend right=15] (v1);

\draw[blueedge]
    (v1) to[bend left=20] (v2);

\draw[greenedge]
    (v1) to[bend right=20] (v2);

\draw[blueedge]
    (v2) to[bend left=15] (zero);

\draw[greenedge]
    (v2) to[bend right=15] (b2);


\draw[rededge]
    (u3) to[bend left=18] (v1p);

\draw[purpleedge]
    (u3) to[bend right=18] (v1p);

\draw[rededge]
    (v1p) to[bend left=22] (v2p);

\draw[purpleedge]
    (v1p) to[bend right=22] (v2p);

\draw[rededge]
    (v2p) to[bend left=12] (b1);

\draw[purpleedge]
    (v2p) to[bend right=18] (b3);

\end{tikzpicture}

\caption{Graph for representing $J_0$ with $n=7$, $\ell=2$ and $r=5$.}
    \label{pic-J0}
\end{figure}

We apply Lemma \ref{lemB} with $g_t=G_t$, where $G_t$ is the fundamental solution of the heat equation, respectively of the wave equation. For both equations, $G_t \in L^{\infty}(\bR)\cap L^{1}(\bR)$:
\begin{equation}
\label{bound-G}
\|G_t\|_{L^{\infty}(\bR)} =\left\{
\begin{array}{ll} 
(2\pi t)^{-1/2}& \mbox{for heat equation} \\
1/2 & \mbox{for wave equation}
\end{array} \right.
\quad 
\|G_t\|_{L^{1}(\bR)} =\left\{
\begin{array}{ll} 
1 & \mbox{for heat equation} \\
t & \mbox{for wave equation}
\end{array} \right.
\end{equation}
The fixed vertex (for which there is no integration variable in $J_0$) is $p=0$. For any spanning tree $T$ of {$G_{\rm white}$},
\begin{equation}
\label{J0-bound}
J_0 (\pmb{\delta}^{(1)},\ldots,\pmb{\delta}^{(4)}) \leq \prod_{i=1}^4 \left( \prod_{j \in J_i} \|G_{\delta_j^{(i)}}\|_{L^{\infty}(\bR)} \prod_{j \in I_i} \|G_{\delta_j^{(i)}}\|_{L^{1}(\bR)}\right),
\end{equation}
where $I_i$ contains the indices $j\leq n$ which correspond to tree edges, and $J_i=\{1,\ldots,n\}\verb2\2 I_i$.

For instance, in the example with $n=7$, $\ell=2$ and $r=5$, we consider the spanning tree $T$ with the set $E_T$ of edges selected as follows, from the 4 colored paths in Figure \ref{pic-J0}:
\begin{align*}
\mbox{from the blue path:}  & \ \{x_1,x_2\}, \{x_2,u_1\},\{u_1,u_2\},\{v_1,v_2\},\{v_2,0\} \\
\mbox{from the pink path:} & \ \{v_1',v_2'\},\{v_2',b_1\} \\
\mbox{from the green path:}  & \ \{x_2',u_1\},\{u_2,u_3\}, \{u_3,v_1\},\{v_2,b_2\} \\
\mbox{from the purple path:} & \ \{x_1',x_2'\},\{u_3,v_1'\},\{v_2',b_3\}.
\end{align*}

\tikzset{
vertex/.style={circle,draw=black,fill=white,minimum size=6mm,inner sep=1pt,font=\small},
blueedge/.style={draw=blue!70!black,line width=0.8pt},
pinkedge/.style={draw=magenta!80!black,line width=0.8pt},
greenedge/.style={draw=green!55!black,line width=0.8pt},
purpleedge/.style={draw=violet!80!black,line width=0.8pt},
treeblue/.style={draw=blue!80!black,line width=1.8pt},
treepink/.style={draw=magenta!90!black,line width=1.8pt},
treegreen/.style={draw=green!65!black,line width=1.8pt},
treepurple/.style={draw=violet!90!black,line width=1.8pt}
}

\begin{figure}[htbp]
\centering

\begin{tikzpicture}

\node[vertex] (x1) at (0,2) {$x_1$};
\node[vertex] (x2) at (1.7,2) {$x_2$};

\node[vertex] (x1p) at (0,0) {$x_1'$};
\node[vertex] (x2p) at (1.7,0) {$x_2'$};

\node[vertex] (u1) at (3.5,1) {$u_1$};
\node[vertex] (u2) at (5.2,1) {$u_2$};
\node[vertex] (u3) at (6.9,1) {$u_3$};

\node[vertex] (v1) at (8.5,2.1) {$v_1$};
\node[vertex] (v2) at (10.2,2.1) {$v_2$};

\node[vertex] (zero) at (11.8,2.7) {$0$};
\node[vertex] (b2) at (11.8,1.5) {$b_2$};

\node[vertex] (v1p) at (8.5,0) {$v_1'$};
\node[vertex] (v2p) at (10.2,0) {$v_2'$};

\node[vertex] (b1) at (11.8,0.8) {$b_1$};
\node[vertex] (b3) at (11.8,-0.8) {$b_3$};

\draw[treeblue] (x1) to[bend left=20] (x2);
\draw[pinkedge] (x1) to[bend right=20] (x2);

\draw[treeblue] (x2) to[bend left=18] (u1);
\draw[pinkedge] (x2) to[bend right=12] (u1);

\draw[greenedge] (x1p) to[bend left=20] (x2p);
\draw[treepurple] (x1p) to[bend right=20] (x2p);

\draw[treegreen] (x2p) to[bend left=14] (u1);
\draw[purpleedge] (x2p) to[bend right=18] (u1);

\draw[treeblue] (u1) to[bend left=18] (u2);
\draw[pinkedge] (u1) to[bend left=5] (u2);
\draw[greenedge] (u1) to[bend right=5] (u2);
\draw[purpleedge] (u1) to[bend right=18] (u2);

\draw[blueedge] (u2) to[bend left=18] (u3);
\draw[pinkedge] (u2) to[bend left=5] (u3);
\draw[treegreen] (u2) to[bend right=5] (u3);
\draw[purpleedge] (u2) to[bend right=18] (u3);

\draw[blueedge] (u3) to[bend left=14] (v1);
\draw[treegreen] (u3) to[bend right=12] (v1);

\draw[treeblue] (v1) to[bend left=18] (v2);
\draw[greenedge] (v1) to[bend right=18] (v2);

\draw[treeblue] (v2) to[bend left=15] (zero);

\draw[treegreen] (v2) to[bend right=12] (b2);

\draw[pinkedge] (u3) to[bend left=12] (v1p);
\draw[treepurple] (u3) to[bend right=16] (v1p);

\draw[treepink] (v1p) to[bend left=18] (v2p);
\draw[purpleedge] (v1p) to[bend right=18] (v2p);

\draw[treepink] (v2p) to[bend left=12] (b1);

\draw[treepurple] (v2p) to[bend right=16] (b3);

\end{tikzpicture}

\caption{The multigraph in Figure \ref{pic-J0}: edges of spanning tree $T$
shown by thicker lines.}
\label{pic-J0-2}

\end{figure}

In $J_0$, we use boxes to exhibit the factors corresponding to the edges in $T$. 
\begin{align*}
&J_0(\pmb{\delta}^{(1)},\ldots,\pmb{\delta}^{(4)})
=
\int_{\bR^{14}}
d\pmb{x}_2\,d\pmb{x}_2'\,d\pmb{u}_3\,d\pmb{v}_2\,d\pmb{v}_2'\,d\pmb{b}_3\\
&\quad
\boxed{G_{\delta_7^{(1)}}(0-v_2)}
\boxed{G_{\delta_6^{(1)}}(v_2-v_1)}
G_{\delta_5^{(1)}}(v_1-u_3)
G_{\delta_4^{(1)}}(u_3-u_2)
\boxed{G_{\delta_3^{(1)}}(u_2-u_1)}
\boxed{G_{\delta_2^{(1)}}(u_1-x_2)}
\boxed{G_{\delta_1^{(1)}}(x_2-x_1)}
\\[2mm]
&\quad
\boxed{G_{\delta_7^{(2)}}(b_1-v_2')}
\boxed{G_{\delta_6^{(2)}}(v_2'-v_1')}
G_{\delta_5^{(2)}}(v_1'-u_3)
G_{\delta_4^{(2)}}(u_3-u_2)
G_{\delta_3^{(2)}}(u_2-u_1)
G_{\delta_2^{(2)}}(u_1-x_2)
G_{\delta_1^{(2)}}(x_2-x_1)
\\[2mm]
&\quad
\boxed{G_{\delta_7^{(3)}}(b_2-v_2)}
G_{\delta_6^{(3)}}(v_2-v_1)
\boxed{G_{\delta_5^{(3)}}(v_1-u_3)}
\boxed{G_{\delta_4^{(3)}}(u_3-u_2)}
G_{\delta_3^{(3)}}(u_2-u_1)
\boxed{G_{\delta_2^{(3)}}(u_1-x_2')}
G_{\delta_1^{(3)}}(x_2'-x_1')
\\[2mm]
&\quad
\boxed{G_{\delta_7^{(4)}}(b_3-v_2')}
G_{\delta_6^{(4)}}(v_2'-v_1')
\boxed{G_{\delta_5^{(4)}}(v_1'-u_3)}
G_{\delta_4^{(4)}}(u_3-u_2)
G_{\delta_3^{(4)}}(u_2-u_1)
G_{\delta_2^{(4)}}(u_1-x_2')
\boxed{G_{\delta_1^{(4)}}(x_2'-x_1')}.
\end{align*}

{We drop the factors which are not in a box, using the fact that $G_t \in L^{\infty}(\bR)$. Then we pick leaves from this tree, and we integrate with respect to them one by one, using the fact that $G_t \in L^1(\bR)$, a leaf being a variable which appears only once. We make sure that the last leaf we pick is $v_{n-r}$, which is linked to the ``root'' 0 (which is not an integration variable), and corresponds to the factor $G_{\delta_{n+1}^{(1)}}(0-v_{n-r})$. }

For this particular example and spanning tree $T$, relation \eqref{J0-bound} holds with:
\[
I_1=\{1,2,3,6,7\}, \quad I_2=\{6,7\}, \quad I_3=\{2,4,5,7\}, \quad I_4=\{1,5,7\}.
\]

We leave the example now, and return to the general argument. By \eqref{J0-bound} and \eqref{bound-G},
\[
J_0(\pmb{\delta}^{(1)},\ldots,\pmb{\delta}^{(4)}) \leq 
\left\{
\begin{array}{ll} 
\prod_{i=1}^4 \prod_{j \in J_i} (2 \pi \delta_i^{(j)})^{-1/2} & \mbox{for the heat equation} \\
\prod_{i=1}^4  (1/2)^{|J_i|} \prod_{j \in I_i} \delta_i^{(j)} & \mbox{for the wave equation}
\end{array} 
\right..
\]
Hence,
\[
J_0 \leq \prod_{i=1}^4 \int_{D_n(t)} \prod_{j=1}^{n} (\delta_j^{(i)})^{\beta_j^{(i)}} d\pmb{\delta}^{(i)}
\]
where 
\[
\beta_j^{(i)}=
\left\{
\begin{array}{ll} 
-1/2 & \mbox{if $i \in J_i$} \\
0 & \mbox{if $j \in I_i$}
\end{array} 
\right. \quad \mbox{for the heat equation},
\]
and
\[
\beta_j^{(i)}=
\left\{
\begin{array}{ll} 
0 & \mbox{if $i \in J_i$} \\
1 & \mbox{if $j \in I_i$}
\end{array} 
\right. \quad \mbox{for the wave equation}.
\]
For both equations, it follows that \fbox{$J_0<\infty$}, using the fact that for any $\beta_1,\ldots,\beta_n>-1$, 
\begin{equation}
\label{delta-int}
\int_{D_n(t)} \prod_{j=1}^n \delta_j^{\beta_j} d\delta_1 \ldots d\delta_n= \frac{\prod_{j=1}^n \Gamma(\beta_j+1)t^{|\beta|+n}}{\Gamma(|\beta|+n+1)} \quad \mbox{with} \quad |\beta|=\sum_{j=1}^n \beta_j.
\end{equation}

\underline{We now return to the case of general permutations.}
The same method that was used above for showing that $J_0<\infty$ can also be used for showing that \eqref{J-rho} holds, for any permutations $\rho_1,\ldots,\rho_4 \in S_n$.  But the multigraph becomes more complicated. Consider again the example with $n=7$, $\ell=2$ and $r=5$. Figure \ref{pic-J0-3} gives this multigraph, which correspond to permutations $\rho_1,\ldots,\rho_4 \in S_7$ leading to the following paths:
\begin{align*}
\mbox{blue path:} & \ \ \ x_1 \to v_1 \to u_3 \to u_2 \to x_2 \to v_2 \to u_1 \to 0 \\
\mbox{pink path:} & \ \ \ u_3 \to v_2' \to x_1 \to u_2 \to v_1' \to u_1 \to x_2 \to b_1\\
\mbox{green path:} & \ \ \ v_1 \to u_2 \to x_1' \to u_3 \to x_2' \to u_1 \to v_2 \to b_2\\
\mbox{purple path:} & \ \ \ u_3 \to x_1'\to u_2 \to v_2' \to u_1 \to x_2' \to v_1' \to b_3.
\end{align*}

\begin{figure}[htbp]
    \centering
    \includegraphics[width=0.65\textwidth]{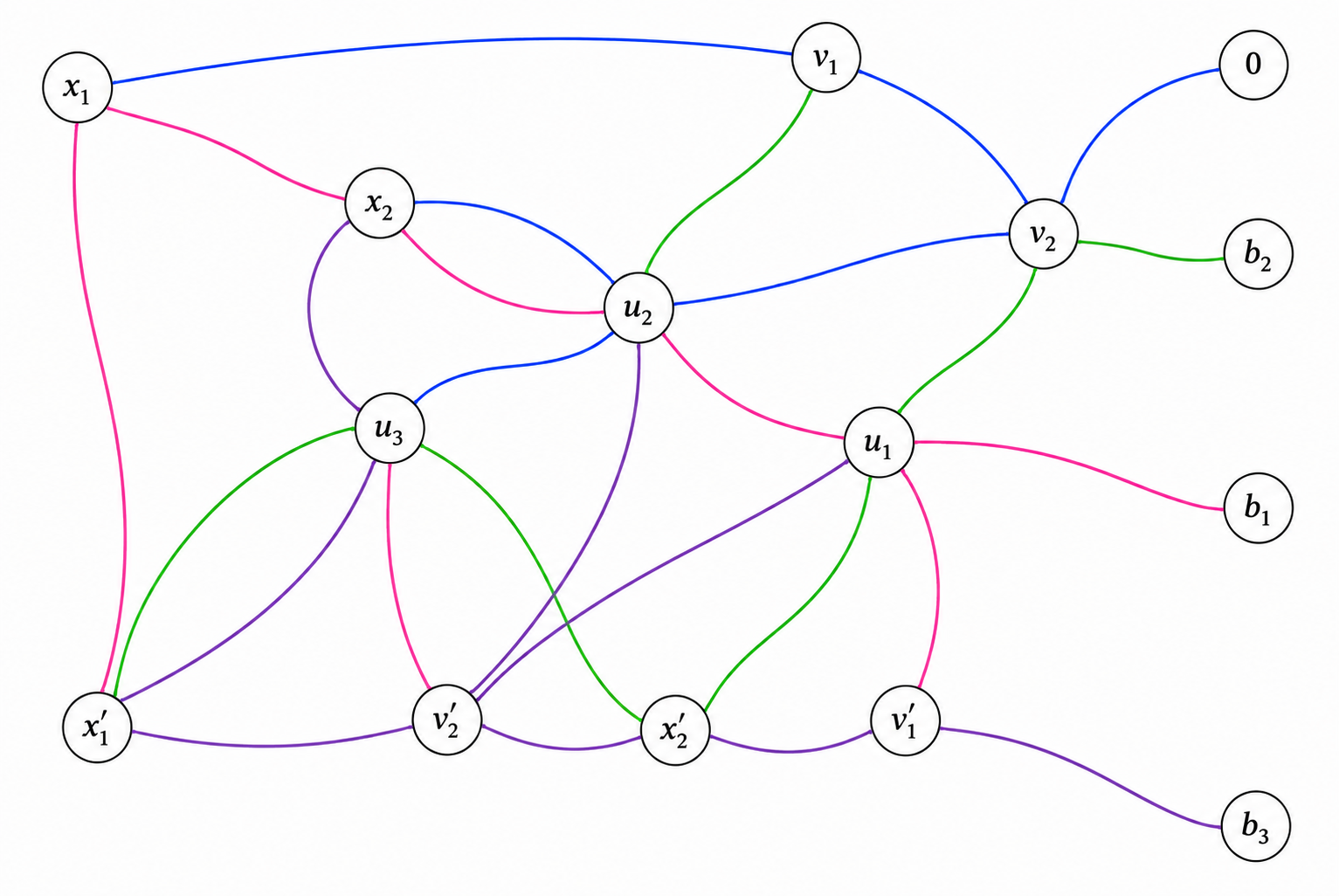}
    \caption{The multigraph for some general permutations $\rho_1,\ldots,\rho_4$.}
    \label{pic-J0-3}
\end{figure}

We omit the details.
This concludes the proof of the theorem in Case 1.

\bigskip

{\bf Case 2. (colored noise)} We fix some permutations $\rho_1,\ldots,\rho_4 \in S_n$. We come back to definition \eqref{J-rho-def} of $J_{\rho_1,\ldots,\rho_4}$ and we introduce new notation:
\begin{align*}
(\pmb{x}_{\ell},\pmb{u}_{r-\ell},\pmb{v}_{n-r})& =(w_1^{(1)},\ldots,w_n^{(1)})=:\pmb{w}^{(1)}, \quad w_{n+1}^{(1)}=0 \\
(\pmb{x}_{\ell},\pmb{u}_{r-\ell},\pmb{v}_{n-r}')& =(w_1^{(2)},\ldots,w_n^{(2)})=:\pmb{w}^{(2)}, \quad w_{n+1}^{(2)}=b_1 \\
(\pmb{x}_{\ell}',\pmb{u}_{r-\ell},\pmb{v}_{n-r})& =(w_1^{(3)},\ldots,w_n^{(3)})=:\pmb{w}^{(3)}, \quad w_{n+1}^{(3)}=b_2 \\
(\pmb{x}_{\ell}',\pmb{u}_{r-\ell},\pmb{v}_{n-r}')& =(w_1^{(4)},\ldots,w_n^{(4)})=:\pmb{w}^{(4)}, \quad w_{n+1}^{(4)}=b_3.
\end{align*}
Hence,
\begin{align*}
J_{\rho_1,\ldots,\rho_4}&= \int_{(\bR^d)^{4n+3}} \prod_{i=1}^4 \varphi_{n,\rho_i}(\pmb{w}^{(i)},w_{n+1}^{(i)};t)\, 1_{S}(\pmb{w}^{(1)},\pmb{w}^{(2)},\pmb{w}^{(3)},\pmb{w}^{(4)})
\prod_{i=1}^4 d\pmb{w}^{(i)} \prod_{i=2}^4 dw_{n+1}^{(i)},
\end{align*}
where $S$ is the set where the following property hold:
\begin{equation}
\left.\begin{array}{rcl}
& & \displaystyle  w_j^{(1)}=w_j^{(2)}=:x_j  \quad \mbox{and} \quad w_j^{(3)}=w_j^{(4)}=:x_j'  \quad \quad \quad \mbox{for all} \  j=1,\ldots,\ell, \\
& & \displaystyle  w_j^{(1)}=w_j^{(2)}=w_j^{(3)}=w_j^{(4)}=:u_{j-\ell}  \quad\quad \quad \quad \quad \quad \ \ \ \mbox{for all} \  j=\ell+1,\ldots,r, \\
& & \displaystyle  w_j^{(1)}=w_j^{(3)}=:v_{j-r}  \quad \mbox{and} \quad w_j^{(2)}=w_j^{(4)}=:v_{j-r}'  \quad \mbox{for all} \  j=r+1,\ldots,n.
\end{array} \right\} \label{propertyP} 
\end{equation}

By definition \eqref{def-phi-n} and the change of variable $\delta_j=t_{j+1}-t_j$ for $j\leq n-1$, $\delta_n=t-t_{n}$,
\begin{align*}
\varphi_n(\pmb{x},x;t)&=\int_{D_n(t)} \int_{(\bR^d)^n}\prod_{j=1}^{n-1} G_{\delta_j}(\widetilde{x}_{j+1}-\widetilde{x}_j)  G_{\delta_n}(x-\widetilde{x}_n)  \prod_{j=1}^n \k(x_j-\widetilde{x}_j) d\widetilde{\pmb{x}} d\pmb{\delta},
\end{align*}
where $\pmb{x}=(x_1,\ldots,x_n)$, $\widetilde{\pmb{x}}=(\widetilde{x}_1,\ldots,\widetilde{x}_n)$ and $\pmb{\delta}=(\delta_1,\ldots,\delta_n)$. Therefore,
\begin{align*}
\varphi_{n,\rho}(\pmb{w},w_{n+1};t)&=\varphi_{n}(w_{\rho(1)}, \ldots,w_{\rho(n+1)},w_{n+1};t)\\
&=\int_{D_n(t)} \int_{(\bR^d)^n}  \prod_{j=1}^{n-1} G_{\delta_j}(\widetilde{w}_{j+1}-\widetilde{w}_j) G_{\delta_n}(w_{n+1}-\widetilde{w}_n) \prod_{j=1}^n \k(w_{\rho(j)}-\widetilde{w}_j) d \pmb{\widetilde{w}} d\pmb{\delta},
\end{align*}
where  $\pmb{w}=(w_1,\ldots,w_n)$ and $\widetilde{\pmb{w}}=(\widetilde{w}_1,\ldots,\widetilde{w}_n)$. It follows that \eqref{J-rho14} holds with
\begin{align}
\nonumber
J_{\rho_1,\ldots,\rho_4} (\pmb{\delta}^{(1)}, \ldots, \pmb{\delta}^{(4)})&=\int_{(\bR^d)^{8n+3}}  \prod_{i=1}^4 d\pmb{w}^{(i)} \prod_{i=1}^4d\widetilde{\pmb{w}}^{(i)} \prod_{i=2}^4 dw_{n+1}^{(i)} 1_{S}(\pmb{w}^{(1)},\pmb{w}^{(2)},\pmb{w}^{(3)},\pmb{w}^{(4)}) \\
\label{def-J-delta}
& \prod_{i=1}^4 \left( \prod_{j=1}^{n-1} G_{\delta_j^{(i)}}(\wt_{j+1}^{(i)}-\wt_j^{(i)}) G_{\delta_n^{(i)}}(w_{n+1}^{(i)}-\wt_n^{(i)}) \prod_{j=1}^n \k(w_{\rho_i(j)}^{(i)}-\wt_j^{(i)}) \right),
\end{align}
where $\widetilde{\pmb{w}}^{(i)}=(\wt_1^{(i)},\ldots,\wt_n^{(i)})$ for $i=1,\ldots,4$.

To calculate $J_{\rho_1,\ldots,\rho_4}(\pmb{\delta}^{(1)}, \ldots, \pmb{\delta}^{(4)})$, we introduce the graph 
\begin{equation}
\label{def-Gcol}
G_{\rm colored}=(V_{\rm colored},E_{\rm colored}) \quad \mbox{with} \quad V_{\rm colored}=\bigcup_{i=1}^4 V_i, \quad E_{\rm colored}=\bigcup_{i=1}^4 E_i,
\end{equation}
where
\begin{align*}
V_i & = \{w_1^{(i)},\ldots,w_n^{(i)},w_{n+1}^{(i)},\wt_{1}^{(i)},\ldots,\wt_n^{(i)}\}  \\
E_i & = \bigcup_{j=1}^{n-1} \{\wt_j^{(i)},\wt_{j+1}^{(i)}\} \bigcup \{\wt_n^{(i)},w_{n+1}^{(i)}\} \bigcup \bigcup_{j=1}^{n} \{w_{\rho_i(j)}^{(i)}, \wt_j^{(i)} \}.
\end{align*}
The vertices of $G_{\rm colored}$ are the $8n+3$ integration variables appearing in $J_{\rho_1,\ldots,\rho_4} (\pmb{\delta}^{(1)}, \ldots, \pmb{\delta}^{(4)})$, plus the vertex $w_{n+1}^{(1)}=0$. 
We connect two vertices $u$ and $v$ with an edge, if the factor $G_{\delta_j^{(i)}}(u-v)$ or the factor $\k(u-v)$ appears in $J_{\rho_1,\ldots,\rho_4} (\pmb{\delta}^{(1)}, \ldots, \pmb{\delta}^{(4)})$. 

Note that the
vertices $\wt_j^{(i)}$ are all distinct, but the $w_j^{(i)}$'s are not all distinct, due to property \eqref{propertyP}. More precisely,
${\rm card}(V)=4(2n+1)-2\ell-3(r-\ell)-2(n-r)$.

\medskip

We consider separately Cases (ii) and (iii). We will select a spanning tree of $G_{\rm colored}$ using a different method for each case.  As in Case 1, we use boxes to exhibit the factors in \eqref{def-J-delta} which correspond to an edge in the tree, and we drop the factors which are not in a box. Then we pick leaves from this tree, and we integrate with respect to them one by one. (A leaf is a variable which appears only once.)

\begin{itemize}
\item In \underline{Case (ii)}, we use Lemma \ref{tree-d-equal1} to select a spanning tree  which contains all edges of the form $\{w_{\rho_i(j)}^{(i)},\wt_j^{(i)}\}$, as well as the edge $\{\wt_n^{(1)},w_{n+1}^{(1)}\}$. 
All the $k$-factors 
and $G_{\delta_n^{(1)}}(\wt_n^{(1)},w_{n+1}^{(1)})$ will be in a box, and will be kept. 

{\em Step 1.} The factors which are not in a box are all $G$'s. These are dropped, using the fact that $G_t \in L^{\infty}(\bR)$ (for both heat and wave equation in dimension $d=1$). This produces some constants of the form $\big(2\pi \delta_j^{(i)}\big)^{-1/2}$ for the heat equation, or $1/2$ for the wave equation. The remaining factors are all the $\k$'s, and some $G$'s. 

{\em Step 2.} We pick leaves from the tree, and we integrate with respect to them one by one, using the fact that $\k \in L^1(\bR)$ and $G_t \in L^1(\bR)$. We make sure that the last leaf we pick is $\wt_{n+1}^{(1)}$, which is linked to the $w_{n+1}^{(1)}=0$, and corresponds to the factor $G_{\delta_{n+1}^{(1)}}(0-w_{n+1}^{(1)})$, which is integrated last (because $w_{n+1}^{(1)}=0$  is not an integration variable). This produces some constants of the form $ \delta_j^{(i)}$ for the wave equation. 

{\em Step 3.} The integral with respect to $\delta_j^{(i)}$'s of the constants produced in Step 1 and Step 2 is finite, due to \eqref{delta-int}.  Recalling \eqref{J-rho14}, we conclude that $J_{\rho_1,\ldots,\rho_4}<\infty$.

\item In \underline{Case (iii)}, we use Lemma \ref{tree-d-equal2} to select a spanning tree  which contains  all edges of the form $\{\wt_{j}^{(i)},\wt_{j+1}^{(i)}\}$, as well as $\{\wt_n^{(i)},w_{n+1}^{(i)}\}$. All the $G$-factors will be in a box, and will be kept. 
    
 {\em Step 1.}  The factors which are not in a box are all $k$'s. These are dropped, using the fact that $\k \in L^{\infty}(\bR^2)$. The remaining factors are all the $G$'s, and some $\k$'s.
 
{\em Steps 2-3.} These steps are exactly as in Case (ii).

\end{itemize}

\end{proof}

\begin{lemma}
\label{connected}
The graph $G_{\rm colored}$ given by \eqref{def-Gcol} is connected.
\end{lemma}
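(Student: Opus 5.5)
The proof will show that every vertex of $G_{\rm colored}$ is joined by a path to the fixed vertex $w_{n+1}^{(1)}=0$. It splits into two parts: first connectivity inside each layer $i$, then connections between the four layers created by the identifications in property \eqref{propertyP}.

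\emph{Step 1 (each layer is connected).} Fix $i\in\{1,\ldots,4\}$ and look at the subgraph $(V_i,E_i)$. The edges $\{\wt_j^{(i)},\wt_{j+1}^{(i)}\}$ for $j=1,\ldots,n-1$, together with $\{\wt_n^{(i)},w_{n+1}^{(i)}\}$, form a path
\[
\wt_1^{(i)}-\wt_2^{(i)}-\cdots-\wt_n^{(i)}-w_{n+1}^{(i)}.
\]
Every remaining vertex of $V_i$ has the form $w_m^{(i)}$ with $1\le m\le n$. Since $\rho_i$ is a bijection of $\{1,\ldots,n\}$, we can set $j=\rho_i^{-1}(m)$. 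Then $E_i$ contains the edge $\{w_m^{(i)},\wt_j^{(i)}\}$, which attaches $w_m^{(i)}$ to the path. Hence $(V_i,E_i)$ is connected; it is the path with one pendant vertex hanging off each $\wt_j^{(i)}$.

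\emph{Step 2 (the layers are glued together).} By \eqref{propertyP}, vertices of different layers are identified in $V_{\rm colored}$, and we check that the four connected layers share enough vertices.
\begin{itemize}
\item If $\ell<r$, the vertices $u_1,\ldots,u_{r-\ell}$ belong to all four $V_i$. So the four connected subgraphs share a common vertex, and their union is connected.
\item If $\ell=r<n$, there are no $u$'s. Layers 1 and 2 share $x_1$, layers 3 and 4 share $x_1'$, and layers 1 and 3 share $v_1$. This links all four layers.
\item If $\ell=r=0$, there are no $x$'s or $u$'s, but layers 1 and 3 share $v_1$ and layers 2 and 4 share $v_1'$. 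These identifications alone join the layers only into the pairs $\{1,3\}$ and $\{2,4\}$.
\end{itemize}

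\emph{Main obstacle.} The only delicate point is the degenerate case $(\ell,r)=(0,0)$. There the graph is genuinely disconnected, so the statement must be read under the standing restriction of Theorem \ref{gg-lem} that $(\ell,r)\neq(0,0)$; the case $(\ell,r)=(n,n)$ is excluded by the same theorem. The proof will state this restriction explicitly. With it, the remaining cases $\ell<r$ and $\ell=r\in\{1,\ldots,n-1\}$ are exactly the two handled above, so every layer is reachable from layer 1, and hence from the vertex $0$. Therefore $G_{\rm colored}$ is connected.
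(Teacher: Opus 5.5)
Your proof is correct and follows the same route as the paper. Each layer $G_i$ is connected (the path of $\wt_j^{(i)}$'s with each $w_m^{(i)}$ attached via $\rho_i^{-1}$), and the layers are glued through the common vertex $u_1$ when $\ell<r$, or through $x_1$, $x_1'$ and $v_1=w_{r+1}^{(1)}=w_{r+1}^{(3)}$ when $1\le \ell=r<n$. Your explicit remark that the graph is disconnected for $(\ell,r)=(0,0)$ (and $(n,n)$), so that the standing restriction of Theorem \ref{gg-lem} is needed, states what the paper uses tacitly, and your shared vertex $v_1$ is the correct one where the paper's text writes $x_1'$.
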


\begin{proof}
For $i$ fixed, we define the subgraph $G_i=(V_i,E_i)$ as the restriction of $G$ with vertices of the form $\wt_{j}^{(i)}$ or $w_{\rho_i(j)}^{(i)}$ and the edges connecting them (see Figure ~\ref{Key} below). For each $i=1,2,3,4$, these subgraphs are connected.

	\begin{figure}[ht]
		\centering
		\begin{tikzpicture}[
			x=1.7cm,
			y=1.5cm,
			point/.style={circle, fill=black, inner sep=1.5pt},
			rail/.style={thick},
			spoke/.style={thick, dotted}
			]
			
			\def\perm{{3,1,6,2,7,4,5}}
			
			\def\n{7}
			
			\foreach \j in {1,...,\n} {
				\node[point] (X\j) at (\j,0) {};
				\node[below=3pt of X\j] {$\tilde{w}_{\j}^{(i)}$};
			}
			
			\foreach \j [evaluate=\j as \next using int(\j+1)] in {1,...,6} {
				\draw[rail] (X\j) -- (X\next);
			}
			
			\foreach \j in {1,...,\n} {
				\pgfmathtruncatemacro{\rhovalue}{\perm[\j-1]}
				
				\node[point] (W\j) at (\j,1.4) {};
				\node[above=3pt of W\j]
				{$w_{\rho_i(\j)}^{(i)}$};
				
				\draw[spoke] (X\j) -- (W\j);
			}
			
			\node[point] (Wend) at (8.3,0) {};
			\node[right=3pt of Wend] {$w_{8}^{(i)}$};
			
			\draw[rail] (X7) -- (Wend);
			
		\end{tikzpicture}
		\caption{The subgraph $G_i$ for $n=7$. The solid and dotted lines correspond to the functions $G$ and $\kappa$ in the integral, respectively.}
		\label{Key}
	\end{figure}

 We consider different cases to show that $G$ is connected. 
	
	If $\ell=0$, then there are $a,b,c,d\in \{1,2,...,n\}$ such that 
	\begin{equation*}
		\rho_1(a)=\rho_2(b)=\rho_3(c)=\rho_4(d)=1,
	\end{equation*}
	and 
	\begin{equation*}
		w_{\rho_1(a)}^{(1)}=w_{\rho_2(b)}^{(2)}=w_{\rho_3(c)}^{(3)}=w_{\rho_4(d)}^{(4)}{=u_1}.
	\end{equation*}
	Since the subgraphs $G_i$ have a common vertex and are themselves connected we conclude that $G$ is connected as well. 
	
	If $0<\ell<r$, we apply the same argument as in the $\ell=0$ case (instead of using the number 1 use $\ell+1$).
	
	If $0<\ell=r<n$, we start by noticing that the subgraph $G_1\cup G_2$ is connected since there are $a,b\in \{1,2,...,n\}$
	\begin{equation*}
		1=\rho_1(a)=\rho_2(b) \text{ and therefore } w_{\rho_1(a)}^{(1)}=w_{\rho_2(b)}^{(2)}{=x_1}.
	\end{equation*} 
	In the exact same way we can also show that $G_3\cup G_4$ is connected. Finally, $G_1\cup G_3$ is connected since there are $c,d\in\{1,2,...,n\}$ such that 
	\begin{equation*}
		\ell+1=\rho_1(c)=\rho_3(d) \text{ and therefore } x_{\rho_1(c)}^{(1)}=x_{\rho_3(d)}^{(3)}{=x_1'}.
	\end{equation*}
	Therefore, we can conclude that $G=G_1\cup G_2\cup G_3 \cup G_4$ is connected.

\end{proof}

\begin{lemma}
\label{tree-d-equal1}
The graph $G_{\rm colored}$ has a spanning tree which contains all edges of the form $\{w_{\rho_i(j)}^{(i)},\wt_j^{(i)}\}$ for $i=1,\ldots,4$ and $j=1,\ldots,n$, as well as 
$\{\wt_n^{(1)},w_{n+1}^{(1)}\}$.
\end{lemma}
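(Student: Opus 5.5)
The plan is to reduce the statement to the standard fact that, in a connected graph, any acyclic set of edges can be extended to a spanning tree. By Lemma \ref{connected}, $G_{\rm colored}$ is connected. So it is enough to show that the prescribed edge set
\[
F=\big\{\{w_{\rho_i(j)}^{(i)},\wt_j^{(i)}\};\ i=1,\ldots,4,\ j=1,\ldots,n\big\}\cup\big\{\{\wt_n^{(1)},w_{n+1}^{(1)}\}\big\}
\]
contains no cycle, i.e.\ that the subgraph $(V_{\rm colored},F)$ is a forest.

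\emph{Acyclicity of $F$.} The key observation is that the vertices $\wt_j^{(i)}$ are pairwise distinct, and none of them is identified with any $w$-vertex by property \eqref{propertyP}. For fixed $(i,j)$, the vertex $\wt_j^{(i)}$ lies on exactly one $\k$-edge of $F$, namely $\{w_{\rho_i(j)}^{(i)},\wt_j^{(i)}\}$, because the $\k$-edges are indexed injectively by $(i,j)$. The only exception to ``degree one'' is $\wt_n^{(1)}$, which is also incident to the extra edge $\{\wt_n^{(1)},w_{n+1}^{(1)}\}$. The vertex $w_{n+1}^{(1)}=0$ has degree one in $F$, since no $\k$-edge ends at it. Every edge of $F$ has at least one endpoint among the $\wt$'s. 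Every vertex on a cycle has degree at least two within the cycle, so the only $\wt$-vertex that could lie on a cycle is $\wt_n^{(1)}$. Hence every edge of a putative cycle would be incident to $\wt_n^{(1)}$, which is impossible for a cycle in a simple graph. The edges of $F$ are simple: they are distinct, and the one possible coincidence of endpoints involves the degree-one vertex $0$. Equivalently, $(V_{\rm colored},F)$ is a disjoint union of stars centred at the distinct $w$-vertices, with one extra pendant vertex $0$ hanging off the leaf $\wt_n^{(1)}$.

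\emph{Extension.} Next, run the greedy (Kruskal-type) procedure. Start with the forest $F$ and scan the remaining edges of $E_{\rm colored}$, namely the $G$-edges $\{\wt_j^{(i)},\wt_{j+1}^{(i)}\}$ and $\{\wt_n^{(i)},w_{n+1}^{(i)}\}$ for $i\ge2$. Add an edge whenever its endpoints lie in different components of the current forest. The result is still acyclic. If it failed to span, some component $C$ would differ from the whole vertex set. By connectedness of $G_{\rm colored}$, some edge of $E_{\rm colored}$ joins $C$ to its complement, and it would have been added when scanned. So the final forest is a spanning tree containing $F$.

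The main point requiring care is the bookkeeping in the acyclicity step. Specifically, one has to make sure that the identifications in \eqref{propertyP} only merge $w$-vertices and never a $\wt$-vertex, and that $w_{n+1}^{(1)}=0$ does not coincide with any $w_j^{(i)}$, $j\le n$. Once this is confirmed, the degree argument above is immediate.
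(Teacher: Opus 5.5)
Your proof is correct and follows essentially the paper's route: the $\kappa$-edges form disjoint stars centred at the $w$-vertices (the paper's $T_j,T_j',S_j,Q_j,Q_j'$), and this forest, together with the edge at $w_{n+1}^{(1)}=0$ (the paper adds the whole first $G$-path), is grown into a spanning tree using the connectivity from Lemma \ref{connected}. Your explicit acyclicity check and order-free Kruskal-type extension are slightly more careful than the paper's Steps 2--3: there, connectivity only guarantees some edge leaving the current component, not one joining a prescribed $T_j'$ or $Q_j'$ directly to it.
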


\begin{proof}
First, we explain the representation of the graph $G_{\rm colored}$, which will be used for constructing the tree. The graph has edges of the form $\{\wt_{j}^{(i)},w_{\rho_{i}(j)}^{(i)}\}$ for $j=1,\ldots,n$, which can be listed also as $\{w_{j}^{(i)},\wt_{\rho_i^{-1}(j)}^{(i)}\}$ for $j=1,\ldots,n$. Because of property \eqref{propertyP} of the nodes, we identify the following disjoint connected components of the graph:\\
-subgraph $T_j$ with center node $w_j^{(1)}=w_j^{(2)}=x_j$, for $j=1,\ldots,\ell$;\\
-subgraph $T_j'$ with center node $w_j^{(3)}=w_j^{(4)}=x_j'$, for $j=1,\ldots,\ell$;\\
-subgraph $S_j$ with center node $w_j^{(1)}=w_j^{(2)}=w_j^{(3)}=w_j^{(4)}=u_{j-\ell}$, for $j=\ell+1,\ldots,r$;\\
-subgraph $Q_j$ with center node $w_j^{(1)}=w_j^{(3)}=v_{j-r}$, for $j=r+1,\ldots,n$;\\
-subgraph $Q_j'$ with center node $w_j^{(2)}=w_j^{(4)}=v_{j-r}'$, for $j=r+1,\ldots,n$.

\begin{figure}[htbp]
\centering



\tikzset{
  smalldot/.style={
    circle,
    draw=black,
    fill=black,
    minimum size=2.8mm,
    inner sep=0pt
  }
}

\tikzset{
  centerdot/.style={
    circle,
    draw=black,
    fill=black,
    minimum size=5mm,
    inner sep=0pt
  }
}

\tikzset{
  lab/.style={
    font=\large,
    inner sep=1pt
  }
}

\tikzset{
  eqlab/.style={
    font=\large,
    draw=black,
    rounded corners=1pt,
    fill=white,
    inner sep=4pt
  }
}

\begin{tikzpicture}[x=1cm,y=1cm]

\node[smalldot]  (left)   at (0,0) {};
\node[centerdot] (center) at (3.2,0) {};
\node[smalldot]  (right)  at (6.4,0) {};

\draw[black,line width=1pt]
  (left)--(center)--(right);

\node[lab,below left=2mm and 0mm of left]
  {$\widetilde w_{4}^{(1)}$};

\node[eqlab,below=6mm of center]
  {$w_j^{(1)}=w_j^{(2)}=x_j$};

\node[lab,below right=2mm and 0mm of right]
  {$\widetilde w_{5}^{(2)}$};

\end{tikzpicture}

\caption{Subgraph $T_j$ for $j=1,\ldots,\ell$.}
\label{fig-Tj}
\end{figure}

\begin{figure}[htbp]
\centering



\tikzset{smalldot/.style={
  circle,
  draw=black,
  fill=black,
  minimum size=2.8mm,
  inner sep=0pt
}}

\tikzset{centerdot/.style={
  circle,
  draw=black,
  fill=black,
  minimum size=5mm,
  inner sep=0pt
}}

\tikzset{lab/.style={
  font=\large,
  inner sep=1pt
}}

\tikzset{eqlab/.style={
  font=\large,
  draw=black,
  rounded corners=1pt,
  fill=white,
  inner sep=4pt
}}

\begin{tikzpicture}[x=1cm,y=1cm]

\node[centerdot] (c) at (0,0) {};
\node[smalldot]  (t) at (0,1.6) {};
\node[smalldot]  (b) at (0,-1.6) {};
\node[smalldot]  (l) at (-2.2,0) {};
\node[smalldot]  (r) at (2.2,0) {};

\draw[black,line width=1pt] (c)--(t);
\draw[black,line width=1pt] (c)--(b);
\draw[black,line width=1pt] (c)--(l);
\draw[black,line width=1pt] (c)--(r);

\node[lab,above=2mm of t] {$\widetilde w_{\rho_1^{-1}(j)}^{(1)}$};
\node[lab,below=2mm of b] {$\widetilde w_{\rho_3^{-1}(j)}^{(3)}$};
\node[lab,left=2mm of l] {$\widetilde w_{\rho_4^{-1}(j)}^{(4)}$};
\node[lab,right=2mm of r] {$\widetilde w_{\rho_2^{-1}(2)}^{(2)}$};

\node[eqlab,anchor=north west] at (0.55,-0.45)
  {$w_j^{(1)}=w_j^{(2)}=w_j^{(3)}=w_j^{(4)}=u_{j-\ell}$};

\end{tikzpicture}

\caption{Subgraph $S_j$ for $j=\ell+1,\ldots,r$.}
\label{fig-Sj}
\end{figure}

Figures \ref{fig-Tj} and \ref{fig-Sj} show the subgraphs $T_j$, respectively $S_j$. The subgraphs $T_j'$, $Q_j$ and $Q_j'$ have the same shape as $T_j$.

To represent $G_{\rm colored}$, we draw these $2\ell+(r-\ell)+2(n-r)$ subgraphs, which correspond to the $\k$-factors in the product \eqref{def-J-delta}, then we draw the edges connecting $\wt_1^{(i)},\ldots,\wt_{n}^{(i)},w_{n+1}^{(i)}$ for $i=1,\ldots,4$, which correspond to the $G$-factors in \eqref{def-J-delta}.

\medskip

We now explain the construction of the tree.

{\em Step 0.} The tree includes all the subgraphs $T_j,T_j',S_j,Q_j,Q_j'$ mentioned above.

{\em Step 1.} We connect the vertices $\wt_1^{(1)},\ldots, \wt_n^{(1)},w_{n+1}^{(1)}$. This produces connected component $C_1$ which includes:\\
- the subgraphs $T_1,\ldots,T_{\ell}$, with respective centers at $\pmb{x}_{\ell}=(x_1,\ldots,x_{\ell})$;
\\
- the subgraphs $S_{\ell+1},\ldots,S_{r}$, with respective centers at $\pmb{u}_{r-\ell}=(u_1,\ldots,u_{r-\ell})$; \\
- the subgraphs $Q_{r+1},\ldots,Q_{\ell}$, with respective centers at $\pmb{v}_{n-r}=(v_1,\ldots,v_{n-r})$.\\
It remains to connect $T_1',\ldots,T_{\ell}',Q_{r+1}',\ldots,Q_n'$ and $w_{n+1}^{(2)},w_{n+1}^{(3)},w_{n+1}^{(4)}$ to $C_1$.

{\em Step 2.} Pick one vertex $\wt_j^{(i)}$ from $T_1'$ and connect it to $C_1$. Such an edge exists, since the graph $G_{\rm colored}$ is connected, by Lemma \ref{connected}. This does not create a cycle. Repeat this procedure for $T_{2}',\ldots, T_{\ell}'$. This produces connected component $C_2$, which includes in addition the subgraphs $T_1',\ldots,T_{\ell}'$ with respective centers at $\pmb{x}_{\ell}'=(x_1',\ldots,x_{\ell}')$. It remains to connect $Q_{r+1}',\ldots,Q_n'$ and $w_{n+1}^{(2)},w_{n+1}^{(3)},w_{n+1}^{(4)}$ to $C_2$.

{\em Step 3.}  Pick one vertex $\wt_j^{(i)}$ from $Q_{r+1}'$ and connect it to $C_2$. Repeat this procedure for $Q_{r+2}',\ldots, Q_{n}'$. This produces connected component $C_3$ which includes in addition the subgraphs $Q_{r+1}',\ldots,Q_{\ell}'$, with respective centers at $\pmb{v}_{n-r}'=(v_1',\ldots,v_{n-r}')$. It remains to connect $w_{n+1}^{(2)},w_{n+1}^{(3)},w_{n+1}^{(4)}$ to $C_3$.

{\em Step 4.} Connect $w_{n+1}^{(2)},w_{n+1}^{(3)},w_{n+1}^{(4)}$ to $C_3$.

\end{proof}

\begin{example}
\label{ex-col1}
{\rm
Let $n=7$, $\ell=2$ and $r=5$. Figure \ref{fig-G-colored1} gives a representation of graph $G_{\rm colored}$ corresponding to the following permutations:
\begin{equation}
\label{def-rho12}
\rho_1=
\begin{pmatrix}
1 & 2 & 3 & 4 & 5 & 6 & 7\\
3 & 7 & 2 & 1 & 4 & 5 & 6
\end{pmatrix}
\qquad
\rho_2=
\begin{pmatrix}
1 & 2 & 3 & 4 & 5 & 6 & 7\\
2 & 3 & 5 & 7 & 1 & 4 & 6
\end{pmatrix}
\end{equation}
\begin{equation}
\label{def-rho34}
\rho_3=
\begin{pmatrix}
1 & 2 & 3 & 4 & 5 & 6 & 7\\
7 & 4 & 5 & 2 & 6 & 1 & 3
\end{pmatrix},
\qquad
\rho_4=
\begin{pmatrix}
1 & 2 & 3 & 4 & 5 & 6 & 7\\
5 & 6 & 1 & 3 & 7 & 4 & 2
\end{pmatrix}
\end{equation}
with inverses:
\[
\rho_1^{-1}=
\begin{pmatrix}
1 & 2 & 3 & 4 & 5 & 6 & 7\\
4 & 3 & 1 & 5 & 6 & 7 & 2
\end{pmatrix},
\qquad
\rho_2^{-1}=
\begin{pmatrix}
1 & 2 & 3 & 4 & 5 & 6 & 7\\
5 & 1 & 2 & 6 & 3 & 7 & 4
\end{pmatrix},
\]
\[
\rho_3^{-1}=
\begin{pmatrix}
1 & 2 & 3 & 4 & 5 & 6 & 7\\
6 & 4 & 7 & 2 & 3 & 5 & 1
\end{pmatrix},
\qquad
\rho_4^{-1}=
\begin{pmatrix}
1 & 2 & 3 & 4 & 5 & 6 & 7\\
3 & 7 & 4 & 6 & 1 & 2 & 5
\end{pmatrix}.
\]

\begin{figure}[htbp]
    \centering
    \input{insert-graph-d-equal-1.tex}
    \caption{The graph $G_{\rm colored}$ for $n=7,\ell=2,r=5$ and $\rho_i$ given by \eqref{def-rho12} and \eqref{def-rho34}.}
    \label{fig-G-colored1}
\end{figure}

The graph in Figure \ref{fig-G-colored1} is constructed as follows. We first draw the subgraphs $T_1$, $T_2$, $T_1'$, $T_2'$ (lines at the top), $S_1$, $S_2$, $S_3$ (crosses in the middle), and $Q_1$, $Q_2$, $Q_1'$, $Q_2'$ (lines at the bottom), using the inverse permutations. For instance, for $T_1$ (top left, in magenta), the node $w_1^{(1)}=w_1^{(2)}$ is connected to $\wt_4^{(1)}$ on the left because $\rho_1^{-1}(1)=4$, and to  $\wt_5^{(2)}$ on the right because $\rho_2^{-1}(1)=5$. This introduces the edges $\{w_{j}^{(i)},\wt_{\rho_i^{-1}(j)}^{(i)}\}$ for $j=1,\ldots,7$ which can be listed also as $\{\wt_{j}^{(i)},w_{\rho_{i}(j)}^{(i)}\}$ for $j=1,\ldots,7$. These edges represent the product of the $k$-factors in \eqref{def-J-delta}, and will be kept for the construction of the tree. Then, we draw the four curved lines connecting $\wt_1^{(i)}, \ldots, \wt_7^{(i)}$ and $w_8^{(i)}$ using different colors: blue for $i=1$, green for $i=2$, orange for $i=3$, and red for $i=4$. These edges represent the product of the $G$-factors in \eqref{def-J-delta}.

Figure \ref{fig-G-colored1-tree} gives the tree from Lemma \ref{tree-d-equal1} with the edges constructed in each step.
\begin{figure}[htbp]
   \centering
   \input{insert-graph-d-equal-1-tree.tex}
    \caption{The spanning tree from Lemma \ref{tree-d-equal1}.}
    \label{fig-G-colored1-tree}
\end{figure}

}
\end{example}

\begin{lemma}
\label{tree-d-equal2}
The graph $G_{\rm colored}$ has a spanning tree which contains all edges of the form $\{\wt_j^{(i)},\wt_{j+1}^{(i)}\}$ for  $i=1,\ldots,4$ and $j=1,\ldots,n-1$, as well as $\{\wt_n^{(i)},w_{n+1}^{(i)}\}$ for $i=1,\ldots,4$.
\end{lemma}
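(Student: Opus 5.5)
The plan mirrors the proof of Lemma \ref{tree-d-equal1}, with the roles of the $G$-edges and the $\k$-edges exchanged. The prescribed edges form four disjoint paths
\[
P_i:\ \wt_1^{(i)}-\wt_2^{(i)}-\cdots-\wt_n^{(i)}-w_{n+1}^{(i)}, \qquad i=1,\ldots,4.
\]
They are disjoint because the vertices $\wt_j^{(i)}$ are all distinct, and the endpoints $w_{n+1}^{(i)}\in\{0,b_1,b_2,b_3\}$ are distinct and are not identified with any $w_j^{(i)}$, $j\le n$, by property \eqref{propertyP}. Their union is a forest $F_0$ with four components, so it contains no cycle. The remaining vertices are the centres $x_j,x_j',u_{j-\ell},v_{j-r},v_{j-r}'$, which are the distinct values of $w_j^{(i)}$ for $j\le n$. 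We add these to the forest using only $\k$-edges $\{w^{(i)}_{\rho_i(j)},\wt^{(i)}_j\}$.

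\emph{Step 1.} Attach each centre vertex by exactly one $\k$-edge. Each centre $c=w^{(i)}_m$ has at least one $\k$-edge, namely $\{w_m^{(i)},\wt^{(i)}_{\rho_i^{-1}(m)}\}$, which goes to a path vertex. Choose one such edge per centre. Adding a new vertex together with a single edge to an existing vertex never creates a cycle. After this step every vertex of $G_{\rm colored}$ lies in the forest $F_1$, which still has four components, since each centre was hung onto one path.

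\emph{Step 2.} Merge the four components using further $\k$-edges incident to the centres. By Lemma \ref{connected}, $G_{\rm colored}$ is connected, so while $F$ has more than one component, some edge of $G_{\rm colored}$ joins two different components. That edge cannot be a $G$-edge, because all $G$-edges lie on the paths $P_i$ and are already in $F$. So it is a $\k$-edge; add it. An edge joining two distinct components creates no cycle, and each addition reduces the number of components by one. After at most three additions we obtain a spanning tree containing all of $P_1,\ldots,P_4$, which is the claim.

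One can also make Step 2 explicit, in the style of the proof of Lemma \ref{connected}, which I would do to match the paper's presentation. If $r>\ell$, the vertex $u_1$ is adjacent to all four paths; keep its first edge and add its other three $\k$-edges. If $r=\ell$, then $x_1$ joins $P_1$ and $P_2$, and $x_1'$ joins $P_3$ and $P_4$. A $v$-centre joins $P_1$ and $P_3$; if there is none, then $\ell=r=n$, which is excluded. I expect no real obstacle here. The only point needing care is the bookkeeping that the identified vertices and the $w_{n+1}^{(i)}$ are distinct, so that the $P_i$ are genuinely disjoint paths, together with the observation that all cross-component edges are $\k$-edges. An illustration analogous to Example \ref{ex-col1} could follow.
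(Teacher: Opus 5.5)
Your proof is correct, but it organizes the construction differently from the paper. The paper does not invoke Lemma \ref{connected} here. It builds the tree in a fixed order:
first it hangs the centres $\pmb{x}_{\ell},\pmb{u}_{r-\ell},\pmb{v}_{n-r}$ on line 1;
next it joins line 2 through one node of $\pmb{x}_{\ell}$ and hangs $\pmb{v}'_{n-r}$ on it;
then it hangs $\pmb{x}'_{\ell}$ on line 3 and joins line 3 through a node of $\pmb{u}_{r-\ell}$ or $\pmb{v}_{n-r}$;
finally it joins line 4 through any of its centres.

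You separate the two tasks. First every centre is attached by one arbitrary $\k$-edge. Then exactly three merging $\k$-edges are added. You justify the merge either abstractly, or explicitly:
\begin{itemize}
\item abstractly, the prescribed $G$-edges form a forest, and a spanning forest in a connected graph extends to a spanning tree, with connectivity coming from Lemma \ref{connected};
\item explicitly, you merge through $u_1$ when $r>\ell$, and through $x_1,x_1',v_1$ when $r=\ell$ (note that $r=\ell$ forces $\ell\geq 1$, so $x_1,x_1'$ exist).
\end{itemize}

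The abstract version shows that the lemma is just a forest-extension statement. It also confines the hypothesis $(\ell,r)\notin\{(0,0),(n,n)\}$ to the connectivity lemma. The same argument proves Lemma \ref{tree-d-equal1} verbatim, since there the forced edges (disjoint $\k$-stars plus one pendant edge at $w_{n+1}^{(1)}=0$) also form a forest.

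Your explicit split is also slightly more careful than the paper's construction, whose Step 2 tacitly assumes $\ell\geq 1$. When $\ell=0$, lines 1 and 2 must be joined through a node of $\pmb{u}_{r-\ell}$, and your case $r>\ell$ covers this. The paper's fixed ordering, for its part, produces a specific tree that can be read off directly in Example \ref{ex-col2}.
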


\begin{proof}
We draw the graph $G_{\rm colored}$ differently than in Lemma \ref{tree-d-equal1}. First, we draw four parallel lines connecting $\wt_1^{(i)},\ldots,\wt_n^{(i)},w_{n+1}^{(i)}$, for each $i=1,\ldots,4$. These are the edges corresponding to the $G$-factors in \eqref{def-J-delta}. Recalling property \eqref{propertyP}, we put the nodes $\pmb{x}_{\ell}$, $\pmb{x}_{\ell}'$, $\pmb{u}_{r-\ell}$, $\pmb{v}_{n-r}$, $\pmb{v}'_{n-r}$ on the right side of these lines. On each line $i$, we connect $\wt_j^{(i)}$  with the node $w_{\rho_i(j)}^{(i)}$ on the right, for $j=1,\ldots,n$. These edges correspond to the $\k$-factors in \eqref{def-J-delta}.

\medskip

We now explain the construction of the tree.

{\em Step 0.} The tree includes the four parallel lines mentioned above.

{\em Step 1.} We connect each vertex $\wt_j^{(1)}$ on the first line with the node $w_{\rho_1(j)}^{(1)}$ at the right, for $j=1,\ldots,n$. This produces a connected component which contains the first line and the nodes $\pmb{x}_{\ell}$, $\pmb{u}_{r-\ell}$, $\pmb{v}_{n-r}$. 

{\em Step 2.} Pick one node from $\pmb{x}_{\ell}$ and connect it to the corresponding vertex $\wt_j^{(2)}$. This connects the first two lines. Then, connect each node from $\pmb{v}'_{n-r}$ with the corresponding vertex $\wt_j^{(2)}$ on the second line. This produces a connected component $C_1$ which contains the first two lines, and $\pmb{x}_{\ell}$, $\pmb{u}_{r-\ell}$, $\pmb{v}_{n-r}$, $\pmb{v}_{n-r}'$.

{\em Step 3.}  Connect each node from $\pmb{x}'_{\ell}$ with the corresponding vertex $\wt_j^{(3)}$ on the third line.  This produces a connected component $C_2$.

{\em Step 4.} Connect $C_1$ and $C_2$: pick a node from $\pmb{v}_{n-r}'$ or $\pmb{u}_{r-\ell}$) and connect it to the corresponding vertex $\wt_j^{(3)}$ on the third line. This produces a connected component $C_3$ which contains the first three lines, and $\pmb{x}_{\ell}$, $\pmb{x}_{\ell}'$, $\pmb{u}_{r-\ell}$, $\pmb{v}_{n-r}$, $\pmb{v}_{n-r}'$.

{\em Step 5.} Connect $C_4$ with the fourth line:  pick a node from $\pmb{x}_{\ell}'$, $\pmb{u}_{r-\ell}$ or $\pmb{v}_{n-r}'$ and connect it to the corresponding vertex $\wt_j^{(4)}$ on the fourth line. 

\end{proof}

\begin{example}
\label{ex-col2}
{\rm Let $n=7$, $\ell=2$ and $r=5$. We consider the permutations given by \eqref{def-rho12} and \eqref{def-rho34}. Figure \ref{fig-G-colored2} gives the graph $G_{\rm colored}$, using the same colors as in Example \ref{ex-col1}. 
Figure \ref{fig-G-colored2-tree} gives the spanning tree from Lemma \ref{tree-d-equal2}.

\begin{figure}[htbp]
    \centering
    \input{insert-graph-d-equal-2.tex}
    \caption{The graph $G_{\rm colored}$ for $n=7,\ell=2,r=5$ and $\rho_i$ given by \eqref{def-rho12} and \eqref{def-rho34}.}
    \label{fig-G-colored2}
\end{figure}

\begin{figure}[htbp]
    \centering


\tikzset{rowdot/.style={circle,minimum size=3.1mm,inner sep=0pt}}
\tikzset{common/.style={circle,minimum size=3.8mm,inner sep=0pt}}
\tikzset{terminal/.style={circle,draw=black,fill=black,minimum size=3.2mm,inner sep=0pt}}
\tikzset{lab/.style={font=\scriptsize,inner sep=1pt}}

\tikzset{magentaDot/.style={rowdot,draw=magenta!85!red,fill=magenta!85!red}}
\tikzset{blueDot/.style={rowdot,draw=blue!85!black,fill=blue!85!black}}
\tikzset{purpleDot/.style={rowdot,draw=violet!95!black,fill=violet!95!black}}
\tikzset{darkgreenDot/.style={rowdot,draw=green!35!black,fill=green!35!black}}
\tikzset{lightgreenDot/.style={rowdot,draw=green!70!black,fill=green!70!black}}

\tikzset{magentaCommon/.style={common,draw=magenta!85!red,fill=magenta!85!red}}
\tikzset{blueCommon/.style={common,draw=blue!85!black,fill=blue!85!black}}
\tikzset{purpleCommon/.style={common,draw=violet!95!black,fill=violet!95!black}}
\tikzset{darkgreenCommon/.style={common,draw=green!35!black,fill=green!35!black}}
\tikzset{lightgreenCommon/.style={common,draw=green!70!black,fill=green!70!black}}

\tikzset{magentaEdge/.style={draw=magenta!85!red,line width=1.0pt}}
\tikzset{blueEdge/.style={draw=blue!85!black,line width=1.0pt}}
\tikzset{purpleEdge/.style={draw=violet!95!black,line width=1.0pt}}
\tikzset{darkgreenEdge/.style={draw=green!35!black,line width=1.0pt}}
\tikzset{lightgreenEdge/.style={draw=green!70!black,line width=1.0pt}}

\tikzset{lightbluepath/.style={draw=cyan!65!blue,line width=1.25pt,opacity=.90,line cap=round}}
\tikzset{greencurlypath/.style={draw=teal!70!green!80!black,line width=1.25pt,opacity=.90,line cap=round}}
\tikzset{orangecurlypath/.style={draw=orange!90!black,line width=1.25pt,opacity=.90,line cap=round}}
\tikzset{redcurlypath/.style={draw=red!70!orange,line width=1.25pt,opacity=.90,line cap=round}}

\tikzset{magentaBox/.style={draw=magenta!85!red,text=magenta!85!red,rounded corners=1pt,fill=white,inner sep=3pt,font=\scriptsize}}
\tikzset{blueBox/.style={draw=blue!85!black,text=blue!85!black,rounded corners=1pt,fill=white,inner sep=3pt,font=\scriptsize}}
\tikzset{purpleBox/.style={draw=violet!95!black,text=violet!95!black,rounded corners=1pt,fill=white,inner sep=3pt,font=\scriptsize}}
\tikzset{darkgreenBox/.style={draw=green!35!black,text=green!35!black,rounded corners=1pt,fill=white,inner sep=3pt,font=\scriptsize}}
\tikzset{lightgreenBox/.style={draw=green!70!black,text=green!50!black,rounded corners=1pt,fill=white,inner sep=3pt,font=\scriptsize}}

\begin{tikzpicture}[x=0.82cm,y=1cm]


\node[purpleDot]    (r1w3) at (0.0,10.0) {};
\node[darkgreenDot] (r1w7) at (1.6,10.0) {};
\node[magentaDot]   (r1w2) at (3.2,10.0) {};
\node[magentaDot]   (r1w1) at (4.8,10.0) {};
\node[purpleDot]    (r1w4) at (6.4,10.0) {};
\node[purpleDot]    (r1w5) at (8.0,10.0) {};
\node[darkgreenDot] (r1w6) at (9.6,10.0) {};
\node[terminal]     (r1w8) at (11.2,10.0) {};

\node[lab,below=1.5mm of r1w3] {$\widetilde w_{1}^{(1)}$};
\node[lab,below=1.5mm of r1w7] {$\widetilde w_{2}^{(1)}$};
\node[lab,below=1.5mm of r1w2] {$\widetilde w_{3}^{(1)}$};
\node[lab,below=1.5mm of r1w1] {$\widetilde w_{4}^{(1)}$};
\node[lab,below=1.5mm of r1w4] {$\widetilde w_{5}^{(1)}$};
\node[lab,below=1.5mm of r1w5] {$\widetilde w_{6}^{(1)}$};
\node[lab,below=1.5mm of r1w6] {$\widetilde w_{7}^{(1)}$};
\node[lab,below=1.5mm of r1w8] {$w_{8}^{(1)}$};

\node[magentaDot]    (r2w2) at (0.0,6.9) {};
\node[purpleDot]     (r2w3) at (1.6,6.9) {};
\node[purpleDot]     (r2w5) at (3.2,6.9) {};
\node[lightgreenDot] (r2w7) at (4.8,6.9) {};
\node[magentaDot]    (r2w1) at (6.4,6.9) {};
\node[purpleDot]     (r2w4) at (8.0,6.9) {};
\node[lightgreenDot] (r2w6) at (9.6,6.9) {};
\node[terminal]      (r2w8) at (11.2,6.9) {};

\node[lab,below=1.5mm of r2w2] {$\widetilde w_{1}^{(2)}$};
\node[lab,below=1.5mm of r2w3] {$\widetilde w_{2}^{(2)}$};
\node[lab,below=1.5mm of r2w5] {$\widetilde w_{3}^{(2)}$};
\node[lab,below=1.5mm of r2w7] {$\widetilde w_{4}^{(2)}$};
\node[lab,below=1.5mm of r2w1] {$\widetilde w_{5}^{(2)}$};
\node[lab,below=1.5mm of r2w4] {$\widetilde w_{6}^{(2)}$};
\node[lab,below=1.5mm of r2w6] {$\widetilde w_{7}^{(2)}$};
\node[lab,below=1.5mm of r2w8] {$w_{8}^{(2)}$};

\node[darkgreenDot] (r3w7) at (0.0,3.8) {};
\node[purpleDot]    (r3w4) at (1.6,3.8) {};
\node[purpleDot]    (r3w5) at (3.2,3.8) {};
\node[blueDot]      (r3w2) at (4.8,3.8) {};
\node[darkgreenDot] (r3w6) at (6.4,3.8) {};
\node[blueDot]      (r3w1) at (8.0,3.8) {};
\node[purpleDot]    (r3w3) at (9.6,3.8) {};
\node[terminal]     (r3w8) at (11.2,3.8) {};

\node[lab,below=1.5mm of r3w7] {$\widetilde w_{1}^{(3)}$};
\node[lab,below=1.5mm of r3w4] {$\widetilde w_{2}^{(3)}$};
\node[lab,below=1.5mm of r3w5] {$\widetilde w_{3}^{(3)}$};
\node[lab,below=1.5mm of r3w2] {$\widetilde w_{4}^{(3)}$};
\node[lab,below=1.5mm of r3w6] {$\widetilde w_{5}^{(3)}$};
\node[lab,below=1.5mm of r3w1] {$\widetilde w_{6}^{(3)}$};
\node[lab,below=1.5mm of r3w3] {$\widetilde w_{7}^{(3)}$};
\node[lab,below=1.5mm of r3w8] {$w_{8}^{(3)}$};

\node[purpleDot]     (r4w5) at (0.0,0.7) {};
\node[lightgreenDot] (r4w6) at (1.6,0.7) {};
\node[blueDot]       (r4w1) at (3.2,0.7) {};
\node[purpleDot]     (r4w3) at (4.8,0.7) {};
\node[lightgreenDot] (r4w7) at (6.4,0.7) {};
\node[purpleDot]     (r4w4) at (8.0,0.7) {};
\node[blueDot]       (r4w2) at (9.6,0.7) {};
\node[terminal]      (r4w8) at (11.2,0.7) {};

\node[lab,below=1.5mm of r4w5] {$\widetilde w_{1}^{(4)}$};
\node[lab,below=1.5mm of r4w6] {$\widetilde w_{2}^{(4)}$};
\node[lab,below=1.5mm of r4w1] {$\widetilde w_{3}^{(4)}$};
\node[lab,below=1.5mm of r4w3] {$\widetilde w_{4}^{(4)}$};
\node[lab,below=1.5mm of r4w7] {$\widetilde w_{5}^{(4)}$};
\node[lab,below=1.5mm of r4w4] {$\widetilde w_{6}^{(4)}$};
\node[lab,below=1.5mm of r4w2] {$\widetilde w_{7}^{(4)}$};
\node[lab,below=1.5mm of r4w8] {$w_{8}^{(4)}$};

\begin{scope}[on background layer]
\draw[lightbluepath] (r1w3)--(r1w7)--(r1w2)--(r1w1)--(r1w4)--(r1w5)--(r1w6)--(r1w8);
\draw[greencurlypath] (r2w2)--(r2w3)--(r2w5)--(r2w7)--(r2w1)--(r2w4)--(r2w6)--(r2w8);
\draw[orangecurlypath] (r3w7)--(r3w4)--(r3w5)--(r3w2)--(r3w6)--(r3w1)--(r3w3)--(r3w8);
\draw[redcurlypath] (r4w5)--(r4w6)--(r4w1)--(r4w3)--(r4w7)--(r4w4)--(r4w2)--(r4w8);
\end{scope}

\node[magentaCommon]   (x1)  at (13.30,11.00) {};
\node[magentaCommon]   (x2)  at (13.30,9.70) {};
\node[blueCommon]      (x1p) at (13.30,8.40) {};
\node[blueCommon]      (x2p) at (13.30,7.10) {};
\node[purpleCommon]    (u1)  at (13.30,5.85) {};
\node[purpleCommon]    (u2)  at (13.30,4.65) {};
\node[purpleCommon]    (u3)  at (13.30,3.45) {};
\node[darkgreenCommon] (v1)  at (13.30,2.25) {};
\node[darkgreenCommon] (v2)  at (13.30,1.25) {};
\node[lightgreenCommon](v1p) at (13.30,0.15) {};
\node[lightgreenCommon](v2p) at (13.30,-0.90) {};

\node[magentaBox,anchor=west] at (13.75,11.00)
 {$w_{1}^{(1)}=w_{1}^{(2)}=x_{1}$};
\node[magentaBox,anchor=west] at (13.75,9.70)
 {$w_{2}^{(1)}=w_{2}^{(2)}=x_{2}$};

\node[blueBox,anchor=west] at (13.75,8.40)
 {$w_{1}^{(3)}=w_{1}^{(4)}=x_{1}'$};
\node[blueBox,anchor=west] at (13.75,7.10)
 {$w_{2}^{(3)}=w_{2}^{(4)}=x_{2}'$};

\node[purpleBox,anchor=west] at (13.75,5.85)
 {$w_{3}^{(1)}=w_{3}^{(2)}=w_{3}^{(3)}=w_{3}^{(4)}=u_{1}$};
\node[purpleBox,anchor=west] at (13.75,4.65)
 {$w_{4}^{(1)}=w_{4}^{(2)}=w_{4}^{(3)}=w_{4}^{(4)}=u_{2}$};
\node[purpleBox,anchor=west] at (13.75,3.45)
 {$w_{5}^{(1)}=w_{5}^{(2)}=w_{5}^{(3)}=w_{5}^{(4)}=u_{3}$};

\node[darkgreenBox,anchor=west] at (13.75,2.25)
 {$w_{6}^{(1)}=w_{6}^{(3)}=v_{1}$};
\node[darkgreenBox,anchor=west] at (13.75,1.25)
 {$w_{7}^{(1)}=w_{7}^{(3)}=v_{2}$};

\node[lightgreenBox,anchor=west] at (13.75,0.15)
 {$w_{6}^{(2)}=w_{6}^{(4)}=v_{1}'$};
\node[lightgreenBox,anchor=west] at (13.75,-0.90)
 {$w_{7}^{(2)}=w_{7}^{(4)}=v_{2}'$};

\begin{scope}[on background layer]

\draw[purpleEdge]    (r1w3) .. controls (3.0,8.4) and (9.6,6.4) .. (u1);
\draw[darkgreenEdge] (r1w7) .. controls (-1.6,10.7) and (-1.8,2.1) .. (v2);
\draw[magentaEdge]   (r1w2) .. controls (6.1,10.65) and (10.8,10.20) .. (x2);
\draw[magentaEdge]   (r1w1) .. controls (7.0,11.05) and (11.8,11.15) .. (x1);
\draw[magentaEdge]   (r1w4) .. controls (8.6,10.40) and (11.9,10.05) .. (x2);
\draw[darkgreenEdge] (r1w5) .. controls (10.3,8.4) and (11.4,3.1) .. (v1);
\draw[purpleEdge]    (r1w6) .. controls (11.7,8.7) and (12.25,6.25) .. (u1);

\draw[lightgreenEdge] (r2w7) .. controls (6.7,5.7) and (10.0,-0.25) .. (v2p);
\draw[lightgreenEdge] (r2w6) .. controls (10.2,5.4) and (10.4,0.7) .. (v1p);
\draw[magentaEdge]    (r2w2) .. controls (2.5,7.8) and (9.7,8.55) .. (x2);

\draw[blueEdge]      (r3w2) .. controls (8.0,4.8) and (11.7,8.1) .. (x1p);
\draw[blueEdge]      (r3w1) .. controls (10.3,4.4) and (12.1,7.25) .. (x2p);
\draw[darkgreenEdge] (r3w7) .. controls (1.0,2.35) and (9.8,1.45) .. (v2);

\draw[purpleEdge]    (r4w5) .. controls (4.0,0.95) and (10.1,2.55) .. (u3);

\end{scope}

\end{tikzpicture}
    \caption{The spanning tree from Lemma \ref{tree-d-equal2}}
    \label{fig-G-colored2-tree}
\end{figure}
}
\end{example}

\newpage

{\em Proof of Theorem \ref{CLT}:} Since $\sigma_R^2(t) \sim c(t) R^d$, it is enough to prove that
\[
\frac{F_R(t)}{R^{d/2}} \stackrel{d}{\longrightarrow} X\sim \cN\big(0,c(t)\big) \quad \mbox{as} \quad R \to \infty.
\]
We write
\[
\frac{F_R(t)}{R^{d/2}} =\frac{1}{R^{d/2}} \sum_{n=1}^N I_n\big(g_{n,R}(\cdot,t)\big)+\frac{1}{R^{d/2}} \sum_{n>N}  I_n\big(g_{n,R}(\cdot,t)\big).
\]
Let $X_N$ be a random variable with a $\cN\big(0,\sum_{n=1}^N c_n(t)\big)$ distribution. We show that:\\
(i) the first term converges in distribution as $R \to \infty$ to  $X_N$, for any $N\geq 1$;\\
(ii) the second term converges to $0$ in $L^2(\Omega)$ as $N\to \infty$, uniformly in $R>0$. 

Moreover, $X_N \stackrel{d}{\to} X$ as $N \to \infty$, since $c(t)=\sum_{n\geq 1}c_n(t)$. 
Our argument is summarized by the diagram below:
\begin{equation*}
\begin{tikzcd}[column sep=large, row sep=large,remember picture]
\frac{1}{R^{d/2}} \sum_{n=1}^N I_n\big(g_{n,R}(\cdot,t)\big) \arrow[r, "d", "R \to \infty"',] \arrow[d, "N \to \infty", "\text{in $L^2(\Omega)$, uniformly in } R"'] & X_N  \arrow[d, "N \to \infty","d"']  \\
\frac{1}{R^{d/2}} F_R(t) \arrow[r, dashrightarrow] & X
\end{tikzcd}
\end{equation*}

To prove (i), it is enough to show that for any $N \geq 1$,
\[
\frac{1}{R^{d/2}} \Big(I_1\big(g_{1,R}(\cdot,t)\big), \ldots, I_N\big(g_{N,R}(\cdot,t\big))  \Big) \stackrel{d}{\longrightarrow} \cN\big(\pmb{0}, \pmb{\Sigma}_N(t)\big) \quad \mbox{as} \quad R \to \infty,
\]
where $\pmb{\Sigma}_N(t)$ is a diagonal matrix with elements $c_1(t),\ldots,c_N(t)$ on the diagonal. For this, we use Theorem \ref{4th-theorem-m}. Condition {\bf (a)} clearly holds, since $I_1(g_{1,R}(\cdot,t)), \ldots, I_N(g_{N,R}(\cdot,t))$ are uncorrelated, and 
\[
\frac{1}{R^{d}} {\rm Var} \big(I_{n}(g_{n,R}(\cdot,t))\big) \to c_n(t) \quad \mbox{as} \quad R \to \infty,
\]
for any $n=1,\ldots,N$; see \eqref{sigma-nR}. Condition {\bf (b)} is also clear, since for any $n=1,\ldots,N$,
\[
\frac{1}{R^{2d}}\bE\big[I_{n,R}^4(g_{n,R}(\cdot,t))\big] \to 3 c_n^2(t) \quad \mbox{as} \quad R \to \infty,
\]
due to \eqref{4mom-I1} and the fact that $\sigma_{n,R}^4(t)\sim c_n^2(t) R^{2d}$ as $R \to \infty$.

\medskip

To prove (ii), we will show that
\begin{equation}
\label{CLT-ii}
\sup_{R>0} \frac{1}{R^d} \bE \Big|\sum_{n>N} I_n\big(g_{n,R}(\cdot,t)\big)\Big|^2 \to 0 \quad \mbox{as} \ N \to \infty.
\end{equation}
By orthogonality and \eqref{sigma-nR1},
\begin{align*}
\sup_{R>0}\frac{1}{R^d} \Big|\sum_{n>N} I_n\big(g_{n,R}(\cdot,t)\big)\Big|^2 & =\sup_{R>0}\frac{1}{R^d} \sum_{n>N} \sigma_{n,R}^2(t)=\sup_{R>0}\sum_{n>N} \int_{B_{2R}}  \rho_{t}^{(n)}(z)\frac{{\rm Vol}\big(B_R \cap B_R(-z)\big)}{R^d}dz
\\
& \leq \omega_d \sum_{n>N}\int_{\bR^d} \rho_{t,n}(z)dz=\sum_{n>N}c_n(t).
\end{align*}

Since $\sum_{n\geq 1}c_n(t)<\infty$, the last term converges to $0$ as $N\to \infty$. This proves \eqref{CLT-ii}, and concludes the proof of Theorem \ref{CLT}.

\appendix

\section{Fourth cumulant estimate}
\label{appA}

In this section, we give an estimate for the fourth cumulant of a random variable which lies in a fixed Poisson chaos space. We assume that $N$ is a Poisson random measure on an arbitrary measure space $(Z, {\mathcal Z}, \fm)$. We let $\cH=L^2(Z, {\mathcal Z}, \fm)$ and $\cH^{\odot p}$ be the set of symmetric functions in $\cH^{\otimes p}$.

\medskip

We recall some definitions. For any $f \in \cH^{\otimes p}$ and $g \in \cH^{\otimes q}$, we define the {\em modified contractions}, as follows:
\begin{itemize}
\item[(i)]
 $f\star^0_0 g = f\otimes g$ is the usual tensor product of $f$
and $g$;

\item[(ii)]
 for $1\leq r\leq p\wedge q$,   $f\star_r^0 g$ is a  function
on $Z^{p+q-r}$ given by:
\begin{align*}
& (f \star_r^0 g)(\gamma_1,\ldots,\gamma_r,x_1,\ldots,x_{p-r},x_1',\ldots,x_{q-r}')=\\
& \quad \quad \quad 
f(\gamma_1, \ldots, \gamma_r,  x_1, \ldots , x_{p-r})  g(\gamma_1, \ldots, \gamma_r, x_1', \ldots , x_{q-r}'),
\end{align*}

\item[(iii)]
for $1\leq \ell \leq r \leq p \wedge q$, $f\star_r^{\ell} g$ is  function
on $Z^{m+n-k-\ell}$, given by
\begin{align*}
& (f \star_r^{\ell} g) (\gamma_1,\ldots,\gamma_{r-\ell},x_1,\ldots,x_{p-r},x_1',\ldots,x_{q-r}')=\\
& \int_{Z^{\ell}} f(z_1,\ldots,z_{\ell},\gamma_1, ... , \gamma_{r-\ell},  x_1, \ldots, x_{p-r})  g(z_1,\ldots,z_{\ell},\gamma_1, \ldots, \gamma_{r-\ell}, x_1', \ldots , x_{q-r}')\fm(dz_1)\ldots \fm(dz_{\ell}),
\end{align*}
provided that the integral is finite.
\end{itemize}

If $1\leq \ell<r$, 
$f \star_r^{\ell} g$ may not exist. 
Moreover,  $f \star_r^r g=f \otimes_r g$ for any $1 \leq r \leq p \wedge q$, where
\begin{align*}
& (f \otimes_r g)(x_1,\ldots,x_{p-r},x_1',\ldots,x_{q-r}')=\\
& \quad \int_{Z^r} f(z_1,\ldots,z_r,x_1,\ldots,x_{p-r})
g(z_1,\ldots,z_r,x_1',\ldots,x_{q-r}')m(dz_1)\ldots m(dz_r).
\end{align*}

If $p=q$, then $f \star_p^p g=\langle f,g \rangle_{\cH^{\otimes p}}$. Moreover, $f \star_p^0 f=f^2$.

\medskip

We let 
$f \, \widetilde{\star}_r^{\ell} \, g$ be the symmetrization of $f \star_r^{\ell} g$, and $f \widetilde{\otimes} g$ be the symmetrization of $f \otimes g$. 

\begin{lemma}
\label{cumulant} 
Let $F=I_p(f)$ for some $f\in \cH^{\odot p}$ and $p\geq 1$, be such that $\bE(F^4)<\infty$ and $\sigma^2=\bE(F^2)>0$. Then
$\bE(F^4)-3\sigma^4$ is bounded above by a linear combination of terms of the form 
\[
\|f \star_r^{\ell} f\|_{\cH^{\otimes(2p-r-\ell)}}^2 \quad \mbox{with $0\leq \ell \leq r \leq p$ and $(\ell,r) \not=(p,p)$, $(\ell,r)\not=(0,0)$}.
\]
\end{lemma}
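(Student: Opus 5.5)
We compute $\bE(F^4)$ through the product formula for multiple Poisson integrals and compare it with $3\sigma^4$. The product formula (Last--Penrose, or Peccati--Taqqu) gives, for symmetric $f\in\cH^{\odot p}$,
\[
F^2=I_p(f)^2=\sum_{r=0}^{p} r!\binom{p}{r}^2\sum_{\ell=0}^{r}\binom{r}{\ell} I_{2p-r-\ell}\big(f\,\widetilde{\star}_r^{\ell}\, f\big),
\]
under the integrability implied by $\bE(F^4)<\infty$. I would first justify this expansion, either directly by the assumption $\bE F^4<\infty$ or by truncating $f$ to bounded functions supported on sets of finite measure and passing to the limit. The term $(\ell,r)=(p,p)$ is the constant $p!\,\|f\|^2=\sigma^2$. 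Hence
\[
F^2-\sigma^2=\sum_{q=1}^{2p} I_q(h_q), \qquad h_q=\sum_{(r,\ell):\,2p-r-\ell=q} c_{p,r,\ell}\, f\,\widetilde{\star}_r^{\ell}\, f .
\]

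By orthogonality of the chaoses,
\[
\bE(F^4)=\sigma^4+\sum_{q=1}^{2p} q!\,\|h_q\|^2 .
\]
So it remains to show that $\sum_q q!\|h_q\|^2-2\sigma^4$ is bounded above by a combination of the squared norms $\|f\star_r^\ell f\|^2$ with $(\ell,r)\neq(p,p),(0,0)$.

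The terms with $q<2p$ involve only pairs $(r,\ell)\neq(0,0)$. For these, $\|h_q\|^2\le C\sum \|f\widetilde{\star}_r^\ell f\|^2\le C\sum\|f\star_r^\ell f\|^2$, using Cauchy--Schwarz and the fact that symmetrization is a contraction. They are therefore already of the allowed form.

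The difficult term is $q=2p$, where $h_{2p}=f\widetilde{\otimes} f$ comes only from $(r,\ell)=(0,0)$. Here I would use the classical identity from the Gaussian fourth moment theorem (Nualart--Peccati, Nourdin--Peccati):
\[
(2p)!\,\|f\widetilde{\otimes} f\|^2 = 2(p!)^2\|f\|^4+(p!)^2\sum_{s=1}^{p-1}\binom{p}{s}^2\|f\otimes_s f\|^2 .
\]
This identity is proved by expanding the symmetrization over permutations of $2p$ variables and grouping them by how many of the first block's variables go into the second block; it is purely combinatorial and needs no Gaussianity. Since $2(p!)^2\|f\|^4=2\sigma^4$, the $2\sigma^4$ cancels exactly. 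The remainder consists of $\|f\otimes_s f\|^2=\|f\star_s^s f\|^2$ with $1\le s\le p-1$, which are admissible pairs $(\ell,r)=(s,s)\neq(p,p),(0,0)$.

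Combining the three steps gives the claim, with all coefficients nonnegative. The main obstacle is the combinatorial identity for $\|f\widetilde\otimes f\|^2$ together with the rigorous justification of the product formula. For the latter I would cite the known Poisson product formula and the integrability criteria of Döbler--Vidotto--Zheng, which \cite{DVZ18} uses precisely in this setting.
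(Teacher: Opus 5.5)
Your proposal is correct and follows essentially the same route as the paper: the Poisson product formula for $F^2$ (whose validity under $\bE(F^4)<\infty$ is Theorem 2.2 of D\"obler--Peccati, the result the paper cites), orthogonality of the chaoses, and the deterministic identity (5.2.12) of Nourdin--Peccati to cancel $2\sigma^4$ against the top-chaos term $(2p)!\,\|f\widetilde{\otimes} f\|^2$, together with Cauchy--Schwarz and the contractivity of symmetrization for the chaoses of order $1\le q\le 2p-1$. The only cosmetic difference is that you derive $\bE(F^4)=\sigma^4+\sum_{q=1}^{2p}q!\,\|h_q\|^2$ directly from orthogonality, whereas the paper quotes it as relation (43) of D\"obler--Peccati.
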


\begin{proof}
By Theorem 2.2 of \cite{DP18-ECP}, $F^2=\sum_{k=0}^{2p} I_{k}(h_{k})=\sum_{m=0}^{2p} I_{2p-m}(h_{2p-m})$, where
\begin{equation}
\label{def-hpm}
h_{2p-m}=\sum_{r=\lceil \frac{m}{2} \rceil}^{m \wedge p} r! \binom{p}{r}^2 \binom{r}{m-r} f \, \widetilde{\star}_r^{m-r} \, f,
\end{equation}
i.e. $I_k(h_k)$ is the projection of $F^2$ on the $k$-th Poisson chaos. Here $\lceil \cdot \rceil$ is the upper floor function: for any $x \in \bR$, $\lceil x \rceil \in \bZ$ and $\lceil x \rceil-1<x \leq \lceil x \rceil$. This means that in the sum \eqref{def-hpm}, the upper index $\ell:=m-r$ can be equal to $r$, since $r \geq \lceil \frac{m}{2} \rceil \geq \frac{m}{2}$. $\ell$ can also be equal to $0$, namely when $m\wedge p=m$ (i.e. $m\leq p$) and $r=m$. But $(\ell,r)$ cannot be $(0,0)$.

By relation (43) of \cite{DP18-AOP},
\begin{align*}
\bE(F^4)-\sigma^4 &=\sum_{k=1}^{2p-1}{\rm Var}\big(I_k(h_k)\big) +(2p)!\| f \widetilde{\otimes} f \|_{\cH^{\otimes 2p}}^2 = \sum_{k=1}^{2p-1}k! \|h_k\|_{\cH^{\otimes k}}^2 +(2p)!\| f \widetilde{\otimes} f \|_{\cH^{\otimes 2p}}^2.
\end{align*}
We now invoke the highly non-trivial relation (5.2.12) of \cite{NP12}, which says that
\[
(2p)!\| f \widetilde{\otimes} f \|_{\cH^{\otimes 2p}}^2=2(p!)^2 \| f \|_{\cH^{\otimes p}}^4+
(p!)^2 \sum_{r=1}^{p-1} \binom{p}{r}^2 \|f \otimes_r f\|_{\cH^{\otimes (2p-2r)}}^2.
\]
We obtain that:
\begin{align*}
\bE[F^4]-\sigma^4=2(p!)^2 \| f \|_{\cH^{\otimes p}}^4+
\sum_{m=1}^{2p-1}(2p-m)! \|h_{2p-m}\|_{\cH^{\otimes (2p-m)}}^2+(p!)^2 \sum_{r=1}^{p-1} \binom{p}{r}^2 \|f \otimes_r f\|_{\cH^{\otimes (2p-2r)}}^2.
\end{align*}
(This relation is similar to (3.5) of \cite{DP18-ECP}, in which $h_{2p-m}$ is replaced by $h_{2p-m}^{(n)}$ given by (3.3) {\em ibid.})
Since $\sigma^4=(\bE[F^2])^2=(p!)^2 \|f\|_{\cH^{\otimes p}}^4$, we obtain that
\[
\bE[F^4]-3\sigma^4=
\sum_{m=1}^{2p-1}(2p-m)! \|h_{2p-m}\|_{\cH^{\otimes (2p-m)}}^2+(p!)^2 \sum_{r=1}^{p-1} \binom{p}{r}^2 \|f \otimes_r f\|_{\cH^{\otimes (2p-2r)}}^2.
\]

Using definition \eqref{def-hpm} of $h_{2p-m}$, triangular inequality, the inequality $(\sum_{i=1}^N a_i)^2 \leq N \sum_{i=1}^N a_i^2$, and the fact that $\|\widetilde{g} \|_{\cH^{\otimes k}} \leq \|g \|_{\cH^{\otimes k}}$ for $g \in \cH^{\otimes k}$, it follows that
\begin{align*}
\|h_{2p-m}\|_{\cH^{\otimes (2p-m)}}^2 
& \leq p \sum_{r=\lceil \frac{m}{2} \rceil}^{m \wedge p} (r!)^2  \binom{p}{r}^4 \binom{r}{m-r}^2  \|f \, \star_r^{m-r} \, f\|_{\cH^{\otimes (2p-m)}}^2.
\end{align*} 
Hence,
\begin{align*}
\bE[F^4]-3\sigma^4 & \leq p \sum_{m=1}^{2p-1}(2p-m)! \sum_{r=\lceil \frac{m}{2} \rceil}^{m \wedge p} (r!)^2  \binom{p}{r}^4 \binom{r}{m-r}^2  \|f \, \star_r^{m-r} \, f\|_{\cH^{\otimes (2p-m)}}^2\\
& + (p!)^2 \sum_{r=1}^{p-1} \binom{p}{r}^2 \|f \, \star_r^{r} \, f \|_{\cH^{\otimes (2p-2r)}}^2
\end{align*}
Finally, we note that $(m-r,r) \not =(p,p)$, for any $m=1,\ldots,2p-1$. 

\end{proof}

\section{A graph-related integral}

In this section, we give an estimate for an integral associated with a multigraph, which is used in the proof of Theorem \ref{gg-lem}.
We first recall some basic elements of graph theory. 

\medskip

An (undirected) {\em graph} is a pair $G=(V,E)$, where $V$ is a set whose elements are called {\em vertices} and $E$ is set of unordered pairs $\{u,v\}$ of vertices, whose elements are called {\em edges.}  A {\em multigraph} is graph which is permitted to have multiple edges, i.e. edges which have the same end points. A graph is {\em connected} is for any two vertices $u,v \in V$, there exists a path whose endpoints are $u$ and $v$. A multigraph is {\em finite} if it has a finite number of vertices and a finite number of edges.

A {\em tree} is an (undirected) graph in which every pair of distinct vertices is connected by exactly one path. (A tree is a connected graph which does not have any cycles.)

A {\em spanning tree} of a graph $G$ is a tree which includes all vertices of $G$. Any finite connected multigraph  has a (non-empty) spanning tree. 
If  $G=(V,E)$ is a finite connected multigraph with at least two vertices, and $T=(V,E_T)$ is a spanning tree of $G$, then 
\[
|E_T|=|V|-1,
\]
where $|S|$ the cardinality of the set $S$.

A {\em leaf} is a vertex that has degree 1. (The {\em degree} of a vertex is the number of edges incident to that vertex.)  Every finite tree with at least two vertices has at least two leaves.

\begin{lemma}
\label{lemB} Let $G=(V,E)$ be a finite (non-empty) connected multigraph with $|V|\geq 2$. For any edge $e\in E$, we denote $e=\{u_e,v_e\}$ with $u_e,v_e \in V$. For any $e \in E$, let $s_{e}>0$ be arbitrary.  Fix an arbitrary vertex $p \in V$. Let $T=(V,E_T)$ be a spanning tree of $G$.

For  any $t>0$, let $g_t:\bR^d \to \bR_{+}$ be a symmetric measurable function such that $g_t \in L^{\infty}(\bR^d) \cap 
L^{1}(\bR^d)$. Then
\[
I:=\int_{(\bR^{d})^{|V|-1}} \prod_{e \in E} g_{s_{e}}(x_{v_e}-x_{u_e}) \prod_{v \in V \verb2\2 \{p\}} dx_v \leq \prod_{e \in E\verb2\2 E_T} \|g_{s_e}\|_{L^{\infty}(\bR^d)} \prod_{e \in E_T} \|g_{s_e}\|_{L^{1}(\bR^d)}.
\]
\end{lemma}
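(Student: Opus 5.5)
The plan is to first discard the non-tree edges and then integrate out the tree one leaf at a time. Since each $g_{s_e}\geq 0$, every factor with $e\in E\setminus E_T$ is bounded pointwise by $\|g_{s_e}\|_{L^\infty(\bR^d)}$. Pulling these constants out of the integral gives
\[
I\leq \prod_{e \in E\setminus E_T}\|g_{s_e}\|_{L^{\infty}(\bR^d)} \; I_T, \qquad I_T:=\int_{(\bR^d)^{|V|-1}} \prod_{e\in E_T} g_{s_e}(x_{v_e}-x_{u_e}) \prod_{v\in V\setminus\{p\}}dx_v .
\]
This reduces the claim to showing $I_T\leq \prod_{e\in E_T}\|g_{s_e}\|_{L^1(\bR^d)}$. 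The integrand is nonnegative and measurable, so Tonelli's theorem allows the remaining integrations to be performed in any order.

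I will prove the bound on $I_T$ by induction on $|V|$, stated for an arbitrary finite tree with a distinguished vertex $p$.

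\emph{Base case $|V|=2$.} The tree is a single edge $\{p,v\}$. By the symmetry of $g_s$ and translation invariance of Lebesgue measure, $\int g_s(x_v-x_p)\,dx_v=\|g_s\|_{L^1}$.

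\emph{Inductive step $|V|\geq 3$.} The tree has at least two leaves, so I can choose a leaf $v\neq p$, with unique incident tree edge $e_0=\{v,w\}$. The variable $x_v$ appears only in the factor $g_{s_{e_0}}(x_v-x_w)$. Integrating first in $x_v$, with all other variables fixed, gives exactly $\|g_{s_{e_0}}\|_{L^1}$, by symmetry and translation invariance. What remains is the integral $I_{T'}$ for the tree $T'=T-v$, which still contains $p$ and has $|V|-1\geq 2$ vertices. The induction hypothesis then yields $I_{T'}\leq\prod_{e\in E_T\setminus\{e_0\}}\|g_{s_e}\|_{L^1}$, with equality in fact. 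Combining the two bounds completes the step.

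The main point to handle with care is choosing the leaf different from $p$ at each stage, which is always possible since a tree with at least two vertices has at least two leaves. The removal of $v$ keeps $T'$ a tree, and $p$ is never integrated. Multiple edges cause no difficulty, since they are all discarded or bounded in the first step and only the simple tree $T$ is integrated. Finiteness of all norms is guaranteed by the assumption $g_t\in L^1\cap L^\infty$.
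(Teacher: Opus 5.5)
Your proposal is correct and follows the paper's proof essentially verbatim. You first bound each non-tree factor by $\|g_{s_e}\|_{L^{\infty}(\bR^d)}$, then integrate out the tree one leaf $v\neq p$ at a time, each integration producing $\|g_{s_e}\|_{L^{1}(\bR^d)}$ and the last one using the edge incident to $p$; your induction on $|V|$, with Tonelli justified by nonnegativity, is just a cleaner formalization of the paper's iterative leaf-removal argument.
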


\begin{proof}
{\em Step 1.} We bound the factors which correspond to a non-tree edge by the $L^{\infty}$-norm:
\[
 \prod_{e \in E} g_{s_{e}}(x_{v_e}-x_{u_e}) \leq  \prod_{e \in E\verb2\2 E_T} \|g_{s_e}\|_{L^{\infty}(\bR^d)} 
 \prod_{e \in E_T} g_{s_{e}}(x_{v_e}-x_{u_e}).
\]
Hence,
$I \leq \prod_{e \in E\verb2\2 E_T} \|g_{s_e}\|_{L^{\infty}(\bR^d)} I_1$,
where
\[
I_1:=\int_{(\bR^d)^{|V|-1}} \prod_{e \in E_T} g_{s_{e}}(x_{v_e}-x_{u_e}) \prod_{v \in V \verb2\2 \{p\}} dx_v.
\]

{\em Step 2.} It remains to prove that
\[
I_1 \leq \prod_{e \in E_T} \|g_{s_e}\|_{L^{1}(\bR^d)}.
\]

We apply Fubini theorem to calculate the integral $I_1$. 
Pick any leaf $w_1 \not =p$. Let $e_1=\{u_{e_1},v_{e_{1}}\}$ be the edge incident to $w_1$. Say $w_1=u_{e_1}$. We integrate $dx_{w_1}$ first:
\[
\int_{\bR^d} g_{s_{e_1}}(x_{v_{e_1}}-x_{w_1})dx_{w_1}=\|g_{s_{e_1}}\|_{L^1(\bR^d)}.
\]
Hence,
\[
I_1=\|g_{s_{e_1}}\|_{L^1(\bR^d)}\int_{(\bR^{d})^{|V|-2}} \prod_{e \in E_T\verb2\2\{e_1\}} g_{s_{e}}(x_{v_e}-x_{u_e}) \prod_{v \in V \verb2\2 \{p,w_1\}} dx_v.
\]

Pick another leaf $w_1 \not \in \{p,w_1\}$. Let $e_2=\{u_{e_2},v_{e_{2}}\}$ be the edge incident to $w_2$. Say $w_2=u_{e_2}$. We apply Fubini theorem again by integrating $dx_{w_2}$:
\[
\int_{\bR^d} g_{s_{e_2}}(x_{v_{e_2}}-x_{w_2})dx_{w_2}=\|g_{s_{e_2}}\|_{L^1(\bR^d)}.
\]
We obtain:
\[
I_1=\|g_{s_{e_1}}\|_{L^1(\bR^d)} \|g_{s_{e_2}}\|_{L^1(\bR^d)} \int_{(\bR^{d})^{|V|-3}} \prod_{e \in E_T\verb2\2\{e_1,e_2\}} g_{s_{e}}(x_{v_e}-x_{u_e}) \prod_{v \in V \verb2\2 \{p,w_1,w_2\}} dx_v.
\]
We continue until we eliminate all leaves. If $|V|=n$, the last integral will be
\[
\int_{\bR^d}g_{s_{e_{n-1}}} (x_p-x_{w_{n-1}}) dx_{w_{n-1}}=\|g_{s_{e_{n-1}}}\|_{L^1(\bR^d)},
\]
where $e_{n-1}=\{p,w_{n-1}\} \in E_T$ is the last edge in the tree, connecting $p$ with the last leaf $w_{n-1}\not=p$.
\end{proof}

We consider an example to illustrate Lemma \ref{lemB}.

\begin{example}
{\rm Consider the multigraph $G=(V,E)$ with $V=\{1,2,\ldots,7\}$,  given by Figure \ref{graph2}, with $p=6$ and spanning tree $T$ with edges $E_T=\{e_1,e_2,e_6,e_8,e_{10},e_{11}\}$.

\begin{figure}[htbp]
    \centering

\begin{tikzpicture}[
    vertex/.style={
        circle,
        draw=black,
        line width=0.8pt,
        minimum size=8mm,
        inner sep=0pt,
        font=\large
    },
    edge/.style={
        draw=black,
        line width=0.9pt
    },
    highlighted/.style={
        draw=magenta!80!red,
        line width=1.8pt
    },
    every node/.style={font=\large}
]

\node[vertex] (v1) at (0,0) {1};
\node[vertex] (v2) at (3,1) {2};
\node[vertex] (v3) at (2.5,-2) {3};
\node[vertex] (v4) at (6,-2) {4};
\node[vertex] (v5) at (10,0) {5};
\node[vertex] (v6) at (8,1) {6};
\node[vertex] (v7) at (6.8,2.5) {7};

\draw[green!60!black, line width=1.5pt] (v6) circle [radius=0.28];
\draw[green!60!black, line width=1.5pt] (v6) circle [radius=0.38];


\draw[highlighted] (v1) -- node[above] {$e_1$} (v2);

\draw[highlighted] (v2) to[bend right=24] node[left] {$e_2$} (v3);
\draw[edge]        (v2) to[bend right=10] node[right,xshift=-2pt] {$e_3$} (v3);
\draw[edge]        (v2) to[bend left=18]  node[right] {$e_4$} (v3);

\draw[edge] (v3) to[bend right=12] node[below] {$e_5$} (v4);

\draw[highlighted] (v2) to[bend left=18] node[below right] {$e_6$} (v4);

\draw[edge] (v2) to[bend right=8] node[below] {$e_7$} (v5);

\draw[highlighted] (v2) to[bend right=4] node[above] {$e_8$} (v6);

\draw[edge] (v2) to[bend left=10] node[above] {$e_9$} (v7);

\draw[highlighted] (v6) to[bend left=20] node[right] {$e_{10}$} (v5);

\draw[highlighted] (v7) to[bend left=18] node[above right] {$e_{11}$} (v6);
\draw[edge]        (v7) to[bend right=18] node[right] {$e_{12}$} (v6);

\end{tikzpicture}
\caption{Multipgraph with $V=\{1,\ldots,7\}$, $p=6$ and $E_T=\{e_1,e_2,e_6,e_8,e_{10},e_{11}\}$.}
\label{graph2}
\end{figure}

We use boxes to exhibit the factors corresponding to the edges of $T$. In Step 1, {we drop the factors which are not in a box}:
\begin{align*}
I
&=
\int_{(\mathbb{R}^{d})^{6}}
\blackbox{g_{s_{e_1}}(x_1-x_2)}
\blackbox{g_{s_{e_2}}(x_2-x_3)}
g_{s_{e_3}}(x_2-x_3)
g_{s_{e_4}}(x_2-x_3)
g_{s_{e_5}}(x_3-x_4)
\blackbox{g_{s_{e_6}}(x_2-x_4)}
\\
&\qquad {}\times
g_{s_{e_7}}(x_2-x_5)
\blackbox{g_{s_{e_8}}(x_2-x_6)}
g_{s_{e_9}}(x_2-x_7)
\blackbox{g_{s_{e_{10}}}(x_5-x_6)}
\blackbox{g_{s_{e_{11}}}(x_6-x_7)}
g_{s_{e_{12}}}(x_6-x_7)
\\
&\qquad {}\times
dx_1\,dx_2\,dx_3\,dx_4\,dx_5\,dx_7  \\
& \leq \prod_{i \in \{3,4,5,7,9,12\}}\|g_{s_{e_i}}\|_{L^{\infty}(\bR^d)} I_1,
\end{align*}
where $I_1$ is the $dx_1\,dx_2\,dx_3\,dx_4\,dx_5\,dx_7$ integral of the product:
\begin{align*}
g_{s_{e_1}}(x_1-x_2)g_{s_{e_2}}(x_2-x_3)g_{s_{e_6}}(x_2-x_4) g_{s_{e_8}}(x_2-x_6) g_{s_{e_{10}}}(x_5-x_6) g_{s_{e_{11}}}(x_6-x_7).
\end{align*}

In Step 2, to compute $I_1$, we choose a variable among $x_1,x_2,x_3,x_4,x_5,x_7$ which appears {\em only once} in this product. This corresponds to choosing the first leaf. Our choices are $1,3,5,7$. Say we pick $x_5$. Then we integrate
\[
\int_{\bR^d}g_{s_{e_{10}}}(x_5-x_6)  dx_5=\|g_{s_{e_{10}}}\|_{L^1(\bR^d)}.
\]
We continue in this manner. Say that the order that we pick for the integration is $x_5, x_7,x_1,x_3,x_4$, which means that we choose leaves 5,7,1,3,4.
The last integral is
\[
\int_{\bR^d}g_{s_8}(x_2-x_6)dx_2=\|g_{s_8}\|_{L^{\infty}(\bR^d)}.
\]
Although the order in which we pick the leaves is not unique, at the end we obtain:
\[
I_1 =\prod_{i \in \{1,2,6,8,10,11\} } \|g_{s_{e_i}} \|_{L^1(\bR^d)}.
\]

}
\end{example}

{\em Acknowledgement.} The authors would like to thank Guangqu Zheng for useful comments.

\end{document}